%
%
%
%
\documentclass{amsart}
\usepackage{bbm}
\usepackage{amsmath, amsthm, amssymb}
\usepackage{verbatim}

\newtheorem{theorem}{Theorem}[section]
\newtheorem{lemma}[theorem]{Lemma}
\newtheorem{proposition}[theorem]{Proposition}

\theoremstyle{definition}
\newtheorem{definition}[theorem]{Definition}

\theoremstyle{remark}

\numberwithin{equation}{section}



\usepackage{biblatex}
\addbibresource{mybibliography.bib}

\begin{document}

\title{Multiple Dirichlet Series Associated With Quadrics}

\author{Jun Wen}






\begin{abstract}
We define a multiple Dirichlet series associated with quadrics which is the zero locus of a quadratic form. 
This multiple Dirichlet series is linked to a Shintani zeta function associated with 
a prehomogeneous vector space. To obtain the functional equations we construct a filtration
of the quadratic space and define the parabolic group actions, and then apply a non-abelian Poisson summation formula 
which sums over all lower dimensional quadrics along with the original quadrics.  We show the group  of functional equations is isomorphic to a finite Weyl group of type $A_3$. 
\end{abstract}

\maketitle


\section{Introduction}

We restrict to the rational number filed $F = \mathbb{Q}$  in the paper.
Let $V_{\mathrm{bqf}}(F) = \{x(u, v) = x_1u^2 + x_2uv + x_3v^2 : x_1, x_2, x_3 \in F \}$ be the space of
binary quadratic forms over $F$. The group $\mathrm{GL}_2(F)$ acts on this vector space via
$\rho(g)x(u, v) = x(au + c v, bu + dv)$ for $g = \left( \begin{matrix}
    a & b \\
    c & d
\end{matrix} \right)$, whose  matrix representation is given by
\[
\rho(g) = \left( \begin{matrix}
a^2 &ab &b^2 \\
2ac &ad + bd &2bd\\
c^2 &cd & d^2
\end{matrix}
\right).
\]
Denote by $B_2^{+}(F)$ the subgroup of lower triangular matrices with positive diagonal entries. The representation $(B_2^{+}(F), V_{\mathrm{bqf}}(F))$ is called the prehomogeneous vector space, which is acted on by a reductive algebraic group such that the orbits are Zariski open sets. 
Under this action there are two polynomial invariants
\[
P_1(x) = x_1 \quad \mathrm{and} \quad P(x) = \mathrm{Disc}(x) = x_2^2 - 4 x_1 x_3.
\]
Let $\chi_1$ and $\chi$ be the characters of $B^{+}_2(F)$ defined by
\[
\chi_1(g) = a^2 \quad \mathrm{and} \quad \chi(g) = \mathrm{det}(g)^2.
\]
We have
\[
P_1(g\cdot x) = \chi_1(g) x_1 \quad \mathrm{and} \quad P(g \cdot x) = \chi(g) P(x).
\]
There is a bilinear pairing on $V_{\mathrm{bqf}}(F)$
\[
\langle x, y\rangle = x_1y_3 - 2^{-1} x_2 y_2 +x_3 y_1.
\]
Let $J$ be the matrix of the pairing $\langle\cdot, \cdot \rangle$ such that $\langle x, y \rangle = x^tJy$ , then 
\[
J = \left( \begin{matrix}
0 &0 &1 \\
0 &-\frac{1}{2} &0\\
1 &0 & 0
\end{matrix}
\right).
\]
Let $Q$ be the corresponding quadratic form of the inner product given by $Q(x) = \frac{x^t J x}{2}$.
The Fourier transform of a Schwartz function $f$ on $V_{\mathrm{bqf}}(\mathbb{R})$ is given by $f(\xi) = \int_{V_{\mathrm{bqf}(\mathbb{R}})} f(x) e^{2 \pi \sqrt{-1} \langle x, \xi \rangle} dx$.

The double Dirichlet series $\xi(s, w)$ and $\xi^{\ast}(s, w)$ are functions with two complex variables $s = (s, w) \in \mathbb{C}^2$, and they are defined by the following expressions:
\begin{equation} \label{eq:double series}
\begin{split}
& \xi_i(s, w) = 2^{-1} \sum_{n, m = 1}^{\infty} A(4m, (-1)^{i-1}n) m^{-s} n^{-w} \\
& \xi_i^{\ast}(s, w) = \sum_{n, m = 1}^{\infty} A(m, (-1)^{i-1}n) m^{-s} (4n)^{-w}
\end{split}
\end{equation}
where $A(m, n)$ denotes the number of distinct solutions to the quadratic congruence equation $x^2 \equiv n \ (\mathrm{mod} \ m)$ for a given pair of  integers $m$ and $n$.

The zeta function associated with 
the prehomogeneous vector space was introduced by Sato and Shintani in \cite{sato1974zeta}.
Shintani  \cite{shintani1975zeta}  derived the functional equations of double Dirichlet series  by studying the zeta functions associated with the lattice of integral binary quadratic forms. Let $L = V_{\mathrm{bqf}}(\mathbb{Z})$ be such lattice with the dual lattice $L^{\ast}$.  Let $\Sigma_{\mathrm{sig}} = \{x: P_1(x) P(x) =0 \}$ be the singular subset. Let $L' = L - L \cap \Sigma_{\mathrm{sig}}, L^{\ast'} = L^{\ast} - L^{\ast} \cap \Sigma_{\mathrm{sig}}$. The Shintani zeta functions are defined as
\begin{equation*}
\begin{split}
Z(f, s, w) &:= \int_{B_2^{+}(\mathbb{R}) / B^{+}_2(\mathbb{Z})} \chi_1(h)^{s} \chi(h)^{w}  \sum_{x \in  L'} f(h \cdot x) dh \\
Z^{\ast}(f, s, w) &:= \int_{B_2^{+}(\mathbb{R}) / B^{+}_2(\mathbb{Z})} \chi_1(h)^{s} \chi(h)^{w}  \sum_{x \in  L^{\ast '}} f(h \cdot x) dh
\end{split}
\end{equation*}
where $f$ is the Schwartz function of $V_{\mathrm{bqf}}(\mathbb{R})$ . These zeta functions are originally absolutely convergent for $\mathrm{Re}(s), \mathrm{Re}(w) > 1$. Shintani showed they satisfy the following functional equations
$$Z(f, s, w) \mapsto Z(f, 1-s, s+w - 1/2) \quad   \mathrm{and} \quad  Z(f, s, w) \mapsto Z^{\ast}(f^{\ast}, s, 3/2 - s - w)$$
and therefore have analytic continuations to the complex plane $\mathbb{C}^2$. In fact, 
the functional equation relating the values at $(s, w)$ and $(1-s, s + w - 1/2)$  comes from the functional equation of Eisenstein series of $\mathrm{SL}_2$; the second functional equation relating the values at $(s, w)$ and $(s, 3/2 - s - w)$ comes from a Poisson summation formula applied to the Schwartz function associated to the prehomogeneous vector space. The zeta functions are related to the double Dirichlet series through the orbital integrals
\[
\Phi_{i}(f, s, w) := \int_{V_i(\mathbb{R})} f(x)   |P_1(x)|^{s} |P(x)|^{w} dx
\]
where $V_i(\mathbb{R}): = \{x \in  V_{\mathrm{bqf}}(\mathbb{R}), (-1)^{i-1} P(x) > 0   \}$.
The relation between Shintani double zeta functions and  $A_2$  Weyl group multiple Dirichlet series is obtained through the 
Mellin transforms of metaplectic Eisenstein series on $\mathrm{GL}_2$ in \cite{diamantis2014converse}. The adelic version of this relation over any number
field has been studied in a recent work \cite{kim2022shintani}.

In this paper, we study a zeta function in three variables defined by
\begin{equation} \label{eq:A3 multiple Dirichlet series}
\begin{split}
 \xi_{\pm}(s_1, s_2, w) := \sum_{\pm D > 0} \frac{1}{|D|^{w}} \sum_{n, m \geq 1} \frac{\sum_{\substack{d \geq 1, d^2|D \\ d|m, d|n}} d \cdot A(\frac{4m}{d}, \frac{D}{d^2}) A(\frac{4n}{d}, \frac{D}{d^2})}{m^{s_1} n^{s_2}}.
\end{split}
\end{equation}
This zeta function comes from the counting function of integral orbits of a spherical variety. For this, take two copies of vector spaces of binary quadratic forms and define $V = \oplus_{i=1}^2 V_{\mathrm{bqf},i}$. For each copy, let $Q_i$ be the binary quadratic form on $V_{\mathrm{bqf},i}$. Equip the vector space $V$ with a nondegenerate quadratic from 
\[
Q(v) = Q(x, y) = Q_1(x) - Q_2(y), \quad x \in V_{\mathrm{bqf},1}, y \in V_{\mathrm{bqf},2}.
\]
 The zero locus  $X \subset V$ is defined as
 \[
X(F) := \{ v \in V(F): Q(v) = 0 \}.
\]
Let $B^{+}$ be the subgroup of pairs $(g_1, g_2) \in B_2^{+} \times B_2^{+}$ such that $\mathrm{det}(g_1) = \mathrm{det}(g_2)$. The action of $B^{+}$ on $V$ has three relative invariants, namely
\[
P_1(v) = P_1(x), \quad P_2(v) = P_1(y),   \quad P(v) = \mathrm{Disc}(x).
\]
The characters $\chi_i$  $ (i = 1, 2)$ and $\chi$ are given by
\[
\chi_1(g) = \chi_1(g_1), \quad \chi_2(g) = \chi_2(g_2),   \quad \chi(g) = \chi(g_1) = \chi(g_2).
\]
Denote by $\Sigma_{\mathrm{sig}} = \{v\in X: P_1(v) P_2(v) P(v) = 0 \}$ the singular subset of $X$.  Let $L = X(\mathbb{Z})$ be the integral points of $X$. Set $L' =L - L \cap \Sigma_{\mathrm{sig}}$.
The representation $(B^{+}(F), V(F))$ is not a prehomogeneous vector space. However, we can still define a zeta function associated with the non-singular integral points of $X$ as follows
\[
Z(f, s_1, s_2, w) := \int_{B^{+}(\mathbb{R}) / B^{+}(\mathbb{Z})} \chi_1(h)^{s_1} \chi_2(h)^{s_2} \chi(h)^{w} \sum_{n=1}^{\infty} \sum_{v \in n L'} n \cdot f(h \cdot v) dh.
\]
In order to derive the functional equations of \eqref{eq:A3 multiple Dirichlet series}, we require three ingredients: 
\begin{itemize}
\item The space of Schwartz functions $\mathcal{S}(X)$ on $X$. 
\item Fourier transform $\mathcal{F}_X$ of Schwartz space $\mathcal{S}(X)$.
\item Poisson summation formula relating $\sum_{v \in L}f(v)$ and $\sum_{v \in L} \mathcal{F}_X(f)(v)$.  
\end{itemize}
The Poisson summation formula is well understood on the vector space and has been applied to many cases in number theory including establishing the functional equations of zeta functions associated with prehomogeneous vector spaces.  
The work of \cite{braverman2010gamma} proposed the Poisson summation conjecture for certain affine spherical varieties which are now called  Braverman–Kazhdan space. The conjecture has been investigated and extended to other spherical varieties in \cite{braverman1999schwartz,  braverman2002normalized, lafforgue2014noyaux, ngo2020hankel, sakellaridis2012spherical}.
The Poisson summation formula and the harmonic analysis on the Braverman–Kazhdan spaces has been developed in \cite{getz2021refined, getz2020summation, getz2021harmonic}.
In \cite{getz2019summation}, the Poisson summation formula was obtained as a first example for a spherical variety that is not a Braverman–Kazhdan space.  Recently, this formula has been generalized for the zero locus of a quadratic form in \cite{getz2023summation}. Built on this Poisson summation formula, we can investigate the zeta functions associated with spaces of quadrics as a natural generalization of Shintani zata functions associated with prehomogeneous vector spaces. The following theorem is the main result of this paper:
\begin{theorem}
For $\mathrm{Re}(s_1), \mathrm{Re}(s_2) > 1$,  the zeta functions $Z(f, s_1, s_2, w)$ and $Z(\mathcal{F}_X(f), s_1, s_2, w)$ have analytic continuations in the whole $w$-plane and satisfy the following functional equation
\begin{equation}
\label{eq:main functional equation on w}
\begin{split}
Z( f, s_1, s_2, w)  &=  Z(\mathcal{F}_X(f), s_1, s_2, 2-s_1-s_2-w).
\end{split}
\end{equation}
\end{theorem}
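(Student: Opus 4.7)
The plan is to adapt Shintani's strategy for the functional equations of zeta functions associated with prehomogeneous vector spaces, replacing the ordinary vector-space Poisson summation (not directly applicable since $(B^{+}, V)$ is not prehomogeneous) by the non-abelian Poisson summation formula on $\mathcal{S}(X)$ from \cite{getz2023summation}. The $w$-variable will play the role analogous to Shintani's Mellin direction, while the convergence hypothesis $\mathrm{Re}(s_1), \mathrm{Re}(s_2) > 1$ keeps the other Mellin variables in a safe region throughout the argument.

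First I would realize $Z(f, s_1, s_2, w)$ as an iterated integral over $B^{+}(\mathbb{R})/B^{+}(\mathbb{Z})$ by isolating the one-parameter subgroup along which $\chi$ varies: decompose $h = t \cdot h_0$, where $h_0$ lies in the codimension-one subgroup cut out by $\chi(h) = 1$ and $t > 0$ parametrizes $\chi(h)$, together with the corresponding Haar measure decomposition. This produces a Mellin presentation
\[
Z(f, s_1, s_2, w) = \int_{0}^{\infty} t^{2w}\, \Theta_{f}(t; s_1, s_2)\, \frac{dt}{t},
\]
where $\Theta_{f}(t; s_1, s_2)$ is the integral of $\chi_1(h_0)^{s_1}\chi_2(h_0)^{s_2}$ against the theta-like sum $\sum_{n \geq 1}\sum_{v \in n L'} n\, f(t h_0 \cdot v)$ over a fundamental domain for $h_0$.

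Next I would split the $t$-integral at $t=1$. The tail $t \geq 1$ converges absolutely for every $w$ because $f$ is Schwartz and the orbit points $t h_0 \cdot v$ for $v$ in a lattice remain bounded away from the origin; the assumption $\mathrm{Re}(s_1), \mathrm{Re}(s_2) > 1$ is used to control the contribution of vectors close to $\Sigma_{\mathrm{sig}}$. For the $t \leq 1$ piece I would first complete the sum over $L'$ to a full sum over $L$ (handling the singular-stratum contributions separately), then apply the Poisson summation formula of \cite{getz2023summation} to replace $\sum_{v \in L} f(t h_0 \cdot v)$ by a sum of $\mathcal{F}_X(f)$ evaluated against the dual lattice with $t$ inverted. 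After the change of variable $t \mapsto t^{-1}$, the resulting integral is precisely the Mellin form for $\mathcal{F}_X(f)$, but with the $w$-exponent replaced by $2 - s_1 - s_2 - w$; this shift combines the Jacobian of $v \mapsto t^{-1}\cdot v$ restricted to $X$, the behaviour of the Haar measure under $t \mapsto t^{-1}$, and the modular character implicit in the normalization of $\mathcal{F}_X$. Recombining the two pieces simultaneously yields the meromorphic continuation in $w$ and the stated functional equation.

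The main obstacle will be controlling the boundary terms produced by the Poisson summation on $X$. Unlike the vector-space case, the quadric Poisson formula of \cite{getz2023summation} is non-abelian in the sense noted in the abstract: it produces additional contributions from the lower-dimensional quadric strata sitting inside $X$, corresponding to the filtration cut out by the parabolic actions. These boundary sums must be shown either to cancel between the two sides of the identity or to contribute only functions holomorphic in $w$, so that the clean functional equation emerges. A related technical subtlety is the justification of interchanging summation and integration in the Mellin and Poisson steps, which is exactly where the absolute convergence afforded by $\mathrm{Re}(s_1), \mathrm{Re}(s_2) > 1$ plays its essential role.
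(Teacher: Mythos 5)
Your high-level strategy matches the paper's: isolate the $\chi$-direction via the Iwasawa decomposition of $B^{+}$, split the resulting one-dimensional integral at $\chi(h) = 1$, apply the Poisson summation formula of \cite{getz2023summation} to the $\chi(h) \leq 1$ piece, and then invert to obtain the $w \mapsto 2 - s_1 - s_2 - w$ shift. You also correctly flag the two places where the argument is nontrivial: the extra terms from lower-dimensional quadric strata and from the regularized constant terms $c_i$. However, you leave precisely these flagged obstacles unresolved, and the mechanisms you suggest are not the ones that actually work.

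The constant terms $c_i$ do not vanish for a general $f$; the paper must first restrict to test functions $\tilde{f} = f' \otimes f''$ built from Schwartz functions supported on the open $B^{+}$-orbits, so that both $f$ and $\mathcal{F}_X(f)$ vanish on $\Sigma_{\mathrm{sig}}$, which in turn forces $d_{3,i}(\tilde f)(0_{V_i},0,a) = 0$ and hence $c_i = 0$. This choice is part of the statement being proved, not an afterthought, and your outline never makes it. More importantly, the contributions from $\sum_{\xi \in X_i^0(F)}$ for $i = 1, 2$ do not "cancel between the two sides" and are not merely holomorphic in $w$: they vanish identically once you integrate over the compact unipotent quotient $N(\mathbb{A}_F)/N(F)$ inside the $B^{+}$-integral. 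This is Proposition \ref{prop:vanishing nilpotent}, and it rests on Proposition \ref{prop:u}, which shows that the unipotent element $u(x)$ acts on $I(\,\cdot\,)(\xi)$ by the nontrivial additive character $\psi(x^t \xi)$ for $\xi \neq 0$, so the $N$-average is zero. Your split at $t = 1$ is done before the $N$-integration is used, so without observing this oscillation-and-vanishing you have no way to discard the lower strata, and the "clean" functional equation would not emerge. In short: right skeleton, but the two ingredients you identify as "must be shown" are exactly the content of the proof, and the mechanisms you propose for them are not correct.
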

We also give the functional equations of zeta function $Z(f, s_1, s_2, w)$ relating the values at $(s_i, w)$ and  $(1-s_i, s_i + w - \frac{1}{2})$ for $i = 1, 2$. Therefore by Bochner's convexity theorem, the multiple Dirichlet series $\xi_{\pm}(s_1, s_2, w)$ have analytic continuations as meromorphic functions in $\mathbb{C}^3$.
\begin{theorem}
The multiple Dirichlet series $\xi_{\pm}(s_1, s_2, w)$
have analytic continuations as meromorphic functions in $\mathbb{C}^3$. In addition to the functional equation \eqref{eq:main functional equation on w},  the functions
\begin{equation}
\begin{split}
&\prod_{i=1}^2 (2 \pi)^{-s_i} \left(\sin \frac{\pi s_i}{2} \right)^{-1}  \Gamma(s_i)\zeta(2 s_i) \xi_{+}(s_1, s_2, w),\\
&\prod_{i=1}^2  (2 \pi)^{-s_i} \Gamma(s_i)\zeta(2 s_i) \xi_{-}(s_1, s_2, w)
\end{split}
\end{equation}
are invariant under the transformations $(s_i, w) \to (1-s_i, s_i + w - \frac{1}{2})$ for $i = 1, 2$.
\end{theorem}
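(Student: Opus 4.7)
The plan is to obtain the two new functional equations by a Shintani-type unfolding argument on each of the two $B_2^+$-factors separately, and then to combine them with Theorem~1 and Bochner's tube theorem to deduce meromorphic continuation to all of $\mathbb{C}^3$. The first task is to express $\xi_\pm(s_1,s_2,w)$ as a Mellin-type integral of $Z(f,s_1,s_2,w)$: I would parametrize $L'=X(\mathbb{Z})-\Sigma_{\mathrm{sig}}$ into $B^+(\mathbb{Z})$-orbits labeled by the invariants $(m,n,D)=(P_1(v),P_2(v),P(v))$, where the arithmetic coefficient appearing in \eqref{eq:A3 multiple Dirichlet series} arises as the number of such orbits with fixed invariants: the index $d$ records primitivity (or stabilizer) data on $X$, and the two factors $A(4m/d,D/d^2)A(4n/d,D/d^2)$ correspond to congruential solution counts on the two copies of $V_{\mathrm{bqf}}$ sharing discriminant $D$. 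After this decomposition $Z(f,s_1,s_2,w)$ factors as archimedean orbital integrals of $f$ over the open strata $\{\pm P(v)>0\}$ times $\xi_\pm(s_1,s_2,w)$.

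Fixing $s_2$ and treating $s_1$ as the active Shintani variable, I would use the Iwasawa-type decomposition of the first $B_2^+$-factor, together with the constraint $\det(g_1)=\det(g_2)$, to unfold the sum over $x\in V_{\mathrm{bqf},1}(\mathbb{Z})$ in $Z(f,s_1,s_2,w)$ along the unipotent radical of $B_2^+$ on the first copy; the inner integral then becomes a Mellin transform of a metaplectic $\mathrm{SL}_2$-Eisenstein series evaluated against the partial Schwartz function $h_1\mapsto\sum_{y}f(h\cdot(x,y))$ obtained by summing the second copy at the shared discriminant. This is precisely the mechanism Shintani used in the single-copy setting recalled in the introduction, and it yields the completed functional equation $(s_1,w)\to(1-s_1,s_1+w-\tfrac12)$; the gamma factors $(2\pi)^{-s_1}\Gamma(s_1)\zeta(2s_1)$ and the sign-dependent $(\sin\tfrac{\pi s_1}{2})^{-1}$ emerge from the archimedean Mellin transform of the orbital integrals, the trigonometric factor distinguishing $P>0$ from $P<0$. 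Swapping the roles of the two copies gives the functional equation for $i=2$.

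The three functional equations --- the one in $w$ from Theorem~1 and the two in $(s_i,w)$ just obtained --- generate the Weyl group action on $\mathbb{C}^3$ flagged in the abstract. Iterating them translates the region of absolute convergence $\{\mathrm{Re}(s_i)>1,\ \mathrm{Re}(w)\gg 0\}$ so that its images, together with their convex hull, exhaust $\mathbb{C}^3$; Bochner's tube theorem then promotes the holomorphic continuation to a meromorphic continuation on all of $\mathbb{C}^3$, with polar divisor supported on the translates of the poles of $\zeta(2s_i)$ and of the gamma factors under this group.

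The main obstacle will be the second step: the determinant constraint $\det(g_1)=\det(g_2)$ couples the two $\mathrm{SL}_2$-factors, so Shintani's single-copy argument cannot be applied verbatim. One must isolate the action of one $\mathrm{SL}_2$ while treating the other copy's parameters as an inert auxiliary measure, verify that the partial Schwartz function obtained after integrating out the second copy is an admissible test vector for the Eisenstein machinery, and ensure that the gamma factors produced by the unfolding exactly match those stated in the theorem; in particular one must check that the poles of $\zeta(2s_i)$ and the trigonometric denominators align with the polar data of the Eisenstein series so that the completed zeta functions are genuinely invariant, rather than anti-invariant, under the reflections.
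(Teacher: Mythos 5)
Your proposal has the right global architecture and is essentially the approach the paper takes: express $Z(f,s_1,s_2,w)$ as $\xi_\pm$ times archimedean orbital integrals $\Phi_\pm$ (this is Proposition~\ref{prop:Z Phi relation}), unfold one $\mathrm{SL}_2$-factor into an Eisenstein series, use its functional equation and Bochner's tube theorem, and appeal to the $s_1\leftrightarrow s_2$ symmetry. However, two of your details are off in ways that matter. First, the Eisenstein series that appears after unfolding is the \emph{ordinary} spherical $\mathrm{SL}_2(\mathbb{R})$ Eisenstein series $E(z,g)$, not a metaplectic one; the metaplectic route is what Diamantis--Goldfeld and Kim--Yamauchi use to relate Shintani zeta functions to Weyl group multiple Dirichlet series, but the paper follows Shintani's original argument with the standard Eisenstein series, and it is $\xi(1+z)E(g,z)=\xi(1-z)E(g,-z)$ that contributes the $\pi^{-s_1}\Gamma(s_1)\zeta(2s_1)$ completion. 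Second, you attribute all the gamma factors to the archimedean Mellin transform of the orbital integrals, but that is where the picture is split: $\pi^{-s_1}\Gamma(s_1)\zeta(2s_1)$ comes from the Eisenstein series, whereas the factor $\left(\sin\frac{\pi s_1}{2}\right)^{-1}$ (present only in the $\xi_+$ case) and the leftover powers of $2$ come from the orbital integral computation.

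The genuine gap in your proposal is that you have no mechanism for showing that the orbital integrals $\Phi_\pm(f,s_1,s_2,w)$ themselves satisfy the needed $(s_1,w)\to(1-s_1,s_1+w-\tfrac12)$ transformation law, and without this the invariance of the completed $Z$-function does \emph{not} transfer to $\xi_\pm$. The paper handles this by an explicit change of variables on the open $B^+$-orbit through $x_\pm$, reducing the angular integral to an integral representation of the Legendre function $P_{s_1-1}$, and then invoking the classical symmetry $P_{s-1}(z)=P_{-s}(z)$. The distinction between $\xi_+$ and $\xi_-$ (hence the trigonometric factor) arises because the substitution is $x(\theta,u,t)=k_\theta\cdot(u,\sqrt{4u^2\pm t},u)$ in the two cases, giving $P_{s_1-1}$ at purely imaginary versus real argument. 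Regarding your concern about the determinant coupling $\det(g_1)=\det(g_2)$: the paper resolves it exactly along the lines you suggest, via the Iwasawa-type factorization $B^+=\Lambda^+A^+N$, separating out the shared central similitude $\mu$ and then integrating $h_1$ over $[\mathrm{SL}_2]$ (where the Eisenstein series lives) and $h_2$ over $[B^1_2]$ with a single overall $d^{\times}\lambda$ integral over the shared similitude. So your flagged obstacle is real but dissolves straightforwardly once the decomposition is written down; the missing ingredient in your proposal is the archimedean Legendre-function calculation, which is where the nontrivial content of the $(s_i,w)$ functional equations actually resides.
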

Before finishing the introduction we want to mention its connection to Weyl group multiple Dirichlet series. Let $G' = B^{+}_2 \times B^{+}_2 \times \mathrm{SL}_2$ and $V' = F^2 \otimes F^2 \otimes F^2$. The representation $(G' , V')$ is a $D_4$-type prehomogeneous vector space of split case. The global zeta function for the non-split case when $V'$ is a pair of simple algebras has been studied in \cite{taniguchi2007zeta}. But the analysis for split case becomes complicated. In \cite{bhargava2004higher}, Bhargava discovered the law of group composition for integral cubes $\mathbb{Z}^2 \otimes \mathbb{Z}^2 \otimes \mathbb{Z}^2$. In \cite{wen2013bhargava}, the author studied the multiple Dirichlet series associated with the prehomogeneous vector space $(G', V')$ and obtained \eqref{eq:A3 multiple Dirichlet series}. Its relation to $A_3$ Weyl group multiple Dirichlet series was obtained by matching its $p$-part with that given in \cite[Example 3.7]{chinta2007weyl}. The three generators of functional equations are relating values between $(s_i, w)$ and $(1-s_i, s_i + w - \frac{1}{2})$ for $i = 1, 2$, and values between $(s_1, s_2, w)$ and $(s_1+w - \frac{1}{2}, s_2+w - \frac{1}{2}, 1 - w)$. For general description of those involutions, see \cite[Section 5]{chinta2007weyl}. It is easy to check that the functional equation relating $(s_1, s_2, w)$ and $(s_1, s_2, 2 - s_1 - s_2 - w)$ is generated by those three involutions. The first two functional equations can also be obtained from the functional equations of quadratic Dirichlet $L$-functions. Thus the main result of this paper is establishing the third functional equation implied by \eqref{eq:main functional equation on w}.

\section{Poisson summation formula on singular varieties}

\subsection{Filtration of quadratic spaces}
For the quadratic form $Q$ associated with the quadratic space $V$, let $J$ denote the bilinear form which is defined by $Q(x) = \frac{x^t J x}{2}$. Identify $V(F)$ with $F^6 = \{(x_1, x_2, x_3, x_4, x_5, x_6)\}$.  Then its matrix form is given by

\begin{equation*}
\begin{split}
J = \left( \begin{matrix}
0 &0 &1 &0 & 0 &0\\
0 &-\frac{1}{2} &0 &0 &0 &0\\
1 &0 & 0 &0 &0 &0\\
0 & 0 &0 &0 &0 &-1\\
0 & 0 &0 &0 &\frac{1}{2} &0\\
0 & 0 &0 &-1 &0 & 0
\end{matrix}
\right).
\end{split}
\end{equation*}
The filtration of $V$ is defined as a sequence of vector spaces $V_0 = \{0\} \subset V_1 \subset V_2  \subset V_3 = V$, such that the subspaces $V_1$ and $V_2$ are given by
 \begin{equation*}
\begin{split}
&V_2(F) := \{(x_1, x_2, x_3, x_5): x_i \in F\},  \quad  V_1(F) := \{(x_2, x_5): x_i \in F\}.
\end{split}
\end{equation*}
Define
\begin{equation*}
\begin{split}
V_4(F) := V_3(F) \oplus F^2.
\end{split}
\end{equation*}
For $i > i'$, identity $V_{i'}$ with the subspace of $V_i$ as
\begin{equation*}
\begin{split}
& V_{i'}(F) \oplus \{0\}^{2i - 2i'} \subset V_i(F).
\end{split}
\end{equation*}
Let $Q_i$ be the quadratic form associated with each subspace $V_i(F)$. Let $J_i$ denote the corresponding bilinear form. Then $Q_i(x) = \frac{x^t J_i x}{2}$. Given such filtration each subspace $V_i(F)$ can be identified with the direct sum $V_{i-1}(F) \oplus F^2$. The restriction of the bilinear form $J$ on the two dimensional subspace $F^2$ of $V_i$ is given by
\begin{equation*}
\begin{split}
J|_{F^2 \subset V_1} = \left( \begin{matrix}
-\frac{1}{2} &0\\
0 &\frac{1}{2}
\end{matrix}
\right), \quad J|_{F^2 \subset V_2} = \left( \begin{matrix}
0 &1\\
1 &0
\end{matrix}
\right), \quad J|_{F^2 \subset V_3} = \left( \begin{matrix}
0 &-1\\
-1 &0
\end{matrix}
\right).
\end{split}
\end{equation*}
For $V_1$,  after changing variables $(x_2, x_5) = (u-v, u+v)$, the matrix form of $J_1$ is given by
\begin{equation*}
\begin{split}
J_1 = \left( \begin{matrix}
0 &1\\
1 &0
\end{matrix}
\right).
\end{split}
\end{equation*}
For $V_2 = V_1 \oplus F^2$. The matrix form of $J_2$ is given by
\begin{equation*}
\begin{split}
J_2 = \left( \begin{matrix}
J_1 &\\
  &J_1
\end{matrix}
\right).
\end{split}
\end{equation*}
For $V_3 = V_2 \oplus F^2$,  by variable changes $(x_4, x_6) = (u, -v)$ on the $F^2$ part,  the quadratic form $Q_3$ is defined such that
\begin{equation*}
\begin{split}
J_3 = \left( \begin{matrix}
J_1 & &\\
 &J_1 &\\
 & &J_1
\end{matrix}
\right).
\end{split}
\end{equation*}
For $V_4(F) = V_3(F) \oplus F^2$ the corresponding quadratic form $Q_4$ is defined such that 
\begin{equation*}
\begin{split}
J_4 = \left( \begin{matrix}
J_1 & & &\\
  &J_1& &\\
  & &J_1& \\
  & & &J_1
\end{matrix}
\right).
\end{split}
\end{equation*}
Define the zero locus of each quadratic space to be
\begin{equation*}
\begin{split}
X_i := \{u \in V_i: Q_i(u) = 0\}
\end{split} \quad \text{($1 \leq i \leq 3$)}
\end{equation*}
and let $X_i^0 = X_i - \{0\}$. 

\subsection{Filtration of parabolic group actions}
Denote by $\mathrm{GO}_{Q_i}$ the orthogonal similitude group of the quadratic space $(V_i, Q_i)$ for $1 \leq i \leq 4$. Let 
\begin{equation*}
\begin{split}
\lambda:  \mathrm{GO}_{Q_i} &\to \mathbb{G}_m
\end{split}
\end{equation*}
be the similitude norm. Define the natural subgroup embedding with respect to the subspace identification $V_i(F) = V_{i-1}(F) \oplus F^2$ as
\begin{equation}
\label{eq:group embedding}
\begin{split}
\mathrm{GO}_{Q_i} & \to \mathrm{GO}_{Q_{i+1}} \\
h & \mapsto \left( \begin{matrix}
    h &           &\\
      &\lambda (h)& \\
      &           & 1\\  
\end{matrix}  \right). \\
\end{split}
\end{equation}
Let $B_2$ denote the subgroup of $\mathrm{SL}_2$ consisting of lower triangular matrices. Let $B$ denote the subgroup of $\mathrm{GO}_Q$ consisting of pairs $(g_1, g_2) \in B_2 \times B_2$ such that $\mathrm{det}(g_1) = \mathrm{det}(g_2)$. Then the similitude norm of group $B$ is given by
\begin{equation*}
\begin{split}
\lambda: B &\to \mathbb{G}_m \\
g &\mapsto \left(\mathrm{deg}(g_1)\right)^2.
\end{split}
\end{equation*}
The ring of adeles and the group of ideles are denoted by $\mathbb{A}$ and $\mathbb{A}^{\times}$, respectively. Let $N\leq B$ be the unipotent subgroup and $T \leq B$ be the subgroup of diagonal matrices of $B$. Then define
\begin{equation*}
\begin{split}
N &= \left\{ \left( \begin{matrix}
    n_u & \\
        &n_v
\end{matrix}  \right): u, v \in \mathbb{A}_F \right\}, \quad n_u = \left( \begin{matrix}
    1 & \\
    u &1
\end{matrix}  \right), \\
A &= \left\{ \left( \begin{matrix}
    a_t & \\
        &a_{s}
\end{matrix}  \right): t, s \in \mathbb{A}_F^{\times} \right\}, \quad a_t = \left( \begin{matrix}
    t &0 \\
    0 &\frac{1}{t}
\end{matrix}  \right), \\
\Lambda &= \left\{ \left( \begin{matrix}
    d_{\mu} & \\
        &d_{\mu}
\end{matrix}  \right): \mu \in \mathbb{A}_F^{\times} \right\}, \quad d_{\mu} = \left( \begin{matrix}
    \mu &0 \\
    0 &\mu
\end{matrix}  \right),
\end{split}
\end{equation*}
such that $T = A \Lambda$. For $h \in B$, it can be expressed as
\begin{equation} \label{eq:h form}
\begin{split}
h = \left( \begin{matrix}
    a_t & \\
        &a_{s}
\end{matrix}  \right) \left( \begin{matrix}
    n_u & \\
        &n_v
\end{matrix}  \right)
\left( \begin{matrix}
    d_{\mu} & \\
        &d_{\mu}
\end{matrix}  \right).
\end{split}
\end{equation}
Define a subgroup $B' \leq B$ as $\{h \in B: h = \left( \begin{matrix}
    a_t & \\
        &1
\end{matrix}  \right) \left( \begin{matrix}
    n_u & \\
        &1
\end{matrix}  \right)
\left( \begin{matrix}
    d_{\mu} & \\
        &d_{\mu}
\end{matrix}  \right)     \}$.
\begin{lemma}
With respect to the filtration of quadratic space and the chosen coordinates, the group embedding 
$B  \to \mathrm{GO}_{Q_3}$ is given by
\begin{equation*}
\begin{split}
\iota  \left( \begin{matrix}
    d_{\mu} & \\
        &d_{\mu}
\end{matrix}  \right) &= 
 \left( \begin{matrix}
\mu^2 I_2 & &\\
 & \mu^2 I_2 &\\
    & &\mu^2 I_2
\end{matrix}
\right), \\
\iota  \left( \begin{matrix}
    a_t & \\
        &a_{s}
\end{matrix}  \right) &= 
 \left( \begin{matrix}
   1 & & & & &\\
    &1 & & & &\\
    & &t^{2}   & & &\\
    & & &t^{-2} & &\\
    & & & &s^{2} & \\
    & & & & &s^{-2}
\end{matrix}
\right), \\
\iota  \left( \begin{matrix}
    n_u & \\
        &n_v
\end{matrix}  \right)   &= 
 \left( \begin{matrix}
   1 &0 &u &0 &v &0\\
   0 &1 &-u &0 &v &0\\
   0 &0 &1  &0 &0 &0\\
   u &-u &u^2 &1 &0 &0\\
   0 &0 &0 &0 &1 &0 \\
   -v &-v &0 &0 &-v^2 &1
\end{matrix}
\right).
\end{split}
\end{equation*}
\end{lemma}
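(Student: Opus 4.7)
The plan is to compute the image of each factor in the decomposition \eqref{eq:h form} directly from the natural action of $B_2 \times B_2$ on $V_{\mathrm{bqf},1} \oplus V_{\mathrm{bqf},2}$, then transport the answer through the linear change of variables that brings $J$ into the block-diagonal form $J_3 = \mathrm{diag}(J_1, J_1, J_1)$ used in the filtration. Since $\iota$ is a group homomorphism, it suffices to handle the three factors $d_{\mu}$, $a_{t}\oplus a_{s}$, and $n_{u}\oplus n_{v}$ separately and then multiply. In the original coordinates $(x_1,\dots,x_6)$, the lower-triangular-unipotent specialization of $\rho$ yields
\[
\rho(a_t)=\mathrm{diag}(t^{2},1,t^{-2}),\qquad \rho(n_u)=\begin{pmatrix}1&0&0\\ 2u&1&0\\ u^{2}&u&1\end{pmatrix},\qquad \rho(d_\mu)=\mu^{2}I_{3},
\]
and analogous formulas with $s,v$ on the second copy, so the block-diagonal embedding into $\mathrm{GO}_{Q}$ in the coordinates $(x_1,\dots,x_6)$ is immediate.

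Next I would record the explicit change-of-variables matrix $C$ that realizes the reordering $(x_1,\dots,x_6)\leftrightarrow(\alpha,\beta,x_1,x_3,\gamma,\delta)$ together with the substitutions $x_2=\alpha-\beta$, $x_5=\alpha+\beta$ on the $V_1$-block and $x_4=\gamma$, $x_6=-\delta$ on the top $F^{2}$-block; the lemma's matrices are then obtained as $C^{-1}(\rho(g_1)\oplus\rho(g_2))C$. Carrying this out for $d_\mu$ is immediate since $\rho(d_\mu)$ is scalar. For $a_t\oplus a_s$ the diagonal structure is preserved because $\rho(a_t)$ fixes $x_2$ and $\rho(a_s)$ fixes $x_5$, so $\alpha,\beta$ are fixed, while $(x_1,x_3)$ and $(x_4,x_6)=(\gamma,-\delta)$ are scaled by $t^{\pm 2}$ and $s^{\pm 2}$ respectively, giving exactly the claimed diagonal matrix. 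For $n_u\oplus n_v$, one computes how $\alpha=(x_2+x_5)/2$, $\beta=(x_5-x_2)/2$, $x_3$, and $\delta=-x_6$ transform under the unipotent action; for example $\alpha\mapsto \alpha+ux_1+v\gamma$ and $\delta\mapsto \delta-v(\alpha+\beta)-v^{2}\gamma$, which together with the unchanged $x_1,\gamma$ and $x_3\mapsto x_3+u(\alpha-\beta)+u^{2}x_1$ assembles into the third matrix in the statement.

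Finally, I would check, as a consistency step, that the matrices displayed satisfy $\iota(h)^{t}J_{3}\iota(h)=\lambda(h)J_{3}$ with $\lambda(h)=\mu^{4}$, so that $\iota(h)$ indeed lies in $\mathrm{GO}_{Q_3}$ and is compatible with the embedding \eqref{eq:group embedding}; this is a direct block multiplication using $J_{1}=\bigl(\begin{smallmatrix}0&1\\1&0\end{smallmatrix}\bigr)$. The only real bookkeeping hazard in the whole argument is keeping the two sign/coordinate conventions $(x_2,x_5)=(\alpha-\beta,\alpha+\beta)$ and $(x_4,x_6)=(\gamma,-\delta)$ straight when computing the images of $n_u\oplus n_v$, because a sign slip there propagates through the whole third matrix; everything else is routine linear algebra.
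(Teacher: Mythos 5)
Your proposal is correct, and since the paper states this lemma without proof, your direct computational verification is precisely what is needed. The key points are all in place: the block-diagonal embedding in the original coordinates via $\rho(a_t)=\mathrm{diag}(t^2,1,t^{-2})$, $\rho(n_u)=\bigl(\begin{smallmatrix}1&0&0\\2u&1&0\\u^2&u&1\end{smallmatrix}\bigr)$, $\rho(d_\mu)=\mu^2 I_3$; conjugation by the change-of-basis $C$ that reorders to $(\alpha,\beta,x_1,x_3,\gamma,\delta)$ with $x_2=\alpha-\beta$, $x_5=\alpha+\beta$, $x_4=\gamma$, $x_6=-\delta$; and the resulting images match the displayed matrices (I rechecked the unipotent case: $\alpha\mapsto\alpha+ux_1+v\gamma$, $\beta\mapsto\beta-ux_1+v\gamma$, $x_3\mapsto x_3+u(\alpha-\beta)+u^2x_1$, $\delta\mapsto\delta-v(\alpha+\beta)-v^2\gamma$, which give exactly the third matrix). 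Your final sanity check $\iota(h)^tJ_3\iota(h)=\lambda(h)J_3$ is a good addition and worth keeping.
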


Note that $B$ does not act invariant on the subspace $V_i$  for $i = 1, 2$, $B'$ acts invariant on $V_2$ but not invariant on the subspace $V_1$. We give the following definition for those actions.
\begin{definition}
With respect to the filtration $V_0 = \{0\} \subset V_1 \subset V_2  \subset V_3 \subset V_4$, let 
\begin{enumerate}
\item    $\iota_3: B \to \mathrm{GO}_{Q_{4}}$ denote the embedding $\iota: B \to \mathrm{GO}_{Q_{3}}$ followed by the group embedding  $\mathrm{GO}_{Q_{3}} \to \mathrm{GO}_{Q_{4}}$ given by \eqref{eq:group embedding}.
\item $\iota_2$ denote the embedding $\iota: B \to \mathrm{GO}_{Q_{3}}$.
\item $\iota_1$ denote the restriction of $\iota$ on $B'$ . Because $B' \cdot V_1 \subset V_2$. This gives the embedding $\iota_1: B' \to \mathrm{GO}_{Q_{2}}$.
\end{enumerate}
\end{definition}
It is easy to check the following expressions
\begin{equation*}
\begin{split}
\iota_1  \left( \begin{matrix}
    n_u & \\
        &1
\end{matrix}  \right)   &= 
 \left( \begin{matrix}
   1 &0 &u &0\\
   0 &1 &-u &0\\
   0 &0 &1  &0\\
   u &-u &u^2 &1
\end{matrix}
\right) , \quad 
\iota_2  \left( \begin{matrix}
    1 & \\
        &n_v
\end{matrix}  \right)   = 
 \left( \begin{matrix}
   1 &0 &0 &0 &v &0\\
   0 &1 &0 &0 &v &0\\
   0 &0 &1  &0 &0 &0\\
   0 &0 &0 &1 &0 &0\\
   0 &0 &0 &0 &1 &0 \\
   -v &-v &0 &0 &-v^2 &1
\end{matrix}
\right).
\end{split}
\end{equation*}
Both of them take the form
\begin{equation}
\label{eq:u form}
\begin{split}
u(x)  &= 
 \left( \begin{matrix}
   I_{V_i}  &J_i x &0\\
       0    &1 & 0\\
   -x^t    &-Q_i(x) & 1
\end{matrix}
\right).
\end{split}
\end{equation}
The subgroup defined by $\{ u(x), x \in F \}$ is the unipotent subgroup of the maximal parabolic subgroup of $\mathrm{GO}_{Q_{i+1}}(F)$.

\subsection{Weil representation and its extensions}
Let 
\begin{equation*}
\begin{split}
\mathcal{S}(V_i(\mathbb{A}_F)) := \mathcal{S}(V_i(F_{\infty})) \otimes C_{c}^{\infty}(V_i(\mathbb{A}_F^{\infty}))
\end{split}
\end{equation*}
be the space of Schwartz functions on each $V_i(\mathbb{A}_F)$. Let $R$ denote the right representation of the orthogonal group $\mathrm{GO}_{Q_i}(\mathbb{A}_F)$ on the Schwartz space $\mathcal{S}(V_i(\mathbb{A}_F))$
\begin{equation*}
\begin{split}
R: \mathrm{GO}_{Q_i}(\mathbb{A}_F) \times \mathcal{S}(V_i(\mathbb{A}_F)) &\to \mathcal{S}(V_i(\mathbb{A}_F)) \\
(h, f) & \mapsto \left( v \to f(h \cdot v) \right).
\end{split}
\end{equation*}
Let $\rho_i$ be the Weil representation on the $\mathrm{SL}_2$ factor, 
\begin{equation*}
\begin{split}
\rho_i: \mathrm{SL}_2(\mathbb{A}_F) \times \mathcal{S}(V_i(\mathbb{A}_F)) \to \mathcal{S}(V_i(\mathbb{A}_F)).
\end{split}
\end{equation*}
At a local place $v$ of $F$, the Weil representation of $\mathrm{SL}_2(F_v)$ is given by
\begin{enumerate}
\item $\rho_i\left( \begin{matrix}
     a& \\
      &a^{-1} 
\end{matrix}  \right) f(u) = \chi_{Q_i}(a) |a|^{\mathrm{dim}_{F_v}V_i/2} f(au) \ \mathrm{for} \ a \in F_v^{\times}$. 
\item $\rho_i\left( \begin{matrix}
    1 &t \\
      &1 
\end{matrix}  \right) f(u) =  \psi_v(t Q_i(u)) f(u)$.
\item $\rho_i\left( \begin{matrix}
     &1 \\
     -1 & 
\end{matrix}  \right) f(u) = \gamma(Q_i)\int_{V_i} f(x) \psi_v(u^t J_i x) dx$ .

\end{enumerate}
The character $\chi_{Q_i}$  is given by Hilbert symbol
\begin{equation*}
\begin{split}
\chi_{Q_i}(a) :=  (a, (-1)^{\mathrm{dim}_{F_v}V_i/2} \mathrm{det}(J_i)),
\end{split}
\end{equation*}
which is independent of each quadratic subspace $V_i$. 
Let $dx_{\infty}$ denote the usual Lebesgue measure on $\mathbb{R}$ and $dx_v$ denote the Haar measure on $F_v$ normalized by
$\int_{\mathcal{O}_v} dx_v = 1$. 
The additive character $\psi: F\backslash \mathbb{A}_F \to \mathbb{C}^{\times}$ is chosen such that over local field $F_v$  the pairing 
$(u, x) \mapsto \psi_v(u^t J_i x)$ makes the $dx$ a self-dual measure. In our case, they are defined by
\begin{equation*}
\begin{split}
\psi_v(x) := \begin{cases}
\exp \left(2\pi \sqrt{-1} x\right) \quad & \text{if $F_v = \mathbb{R}$}, \\
\exp \left(-2\pi \sqrt{-1} [x]_p\right)     & \text{if $F_v = \mathbb{Q}_p$}.
\end{cases}
\end{split}
\end{equation*}

The representation of the orthogonal group $ \mathrm{GO}_{Q_i}(\mathbb{A}_F)$ and the Weil representation of $\mathrm{SL}_2(\mathbb{A}_F)$ on the Schwartz space $\mathcal{S}(V_i(\mathbb{A}_F))$ do not commute. In fact, we have the following \cite[Lemma 3.1]{getz2019summation}
\begin{lemma}
\begin{equation} \label{eq:twisted g}
\begin{split}
R(h) \circ \rho_i(g) = \rho_i(g^h) \circ R(h),
\end{split}
\end{equation}
where
\begin{equation*}
\begin{split}
g^h = \left( \begin{matrix}
     1& \\
      &\lambda(h)^{-1} 
\end{matrix}  \right) g  \left( \begin{matrix}
     1& \\
      &\lambda(h)
\end{matrix}  \right).
\end{split}
\end{equation*}
\end{lemma}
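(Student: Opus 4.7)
The plan is to verify the intertwining identity \eqref{eq:twisted g} on a set of generators of $\mathrm{SL}_2(\mathbb{A}_F)$ and then extend multiplicatively. The multiplicative extension is automatic because $g \mapsto g^h$ is a group homomorphism (conjugation by $\left(\begin{smallmatrix}1 & \\ & \lambda(h)\end{smallmatrix}\right)$): if the identity holds at $g_1$ and $g_2$, then
\[
R(h)\rho_i(g_1g_2) = \rho_i(g_1^h)R(h)\rho_i(g_2) = \rho_i(g_1^h)\rho_i(g_2^h)R(h) = \rho_i((g_1g_2)^h)R(h).
\]
So it suffices to check the identity on the three standard Bruhat generators $\left(\begin{smallmatrix}a & \\ & a^{-1}\end{smallmatrix}\right)$, $\left(\begin{smallmatrix}1 & t \\ & 1\end{smallmatrix}\right)$, and $w = \left(\begin{smallmatrix} & 1 \\ -1 & \end{smallmatrix}\right)$.

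The torus and unipotent cases are short. For the torus, conjugation is trivial, so $g^h = g$, and both sides reduce to $\chi_{Q_i}(a)|a|^{\dim V_i/2} f(a h \cdot v)$ by definition (1). For the upper unipotent, direct computation gives $g^h = \left(\begin{smallmatrix}1 & t\lambda(h) \\ & 1\end{smallmatrix}\right)$, and the orthogonal similitude property $Q_i(h\cdot v) = \lambda(h) Q_i(v)$ matches the two sides via definition (2).

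The main work is the Weyl element. One computes $w^h = \left(\begin{smallmatrix} & \lambda(h) \\ -\lambda(h)^{-1} & \end{smallmatrix}\right)$, which factors inside $\mathrm{SL}_2$ as $\left(\begin{smallmatrix} \lambda(h) & \\ & \lambda(h)^{-1}\end{smallmatrix}\right) \cdot w$. Expanding $\rho_i(w^h) \circ R(h) f(v)$ via definitions (1) and (3) produces an integral of the shape $\int f(h\cdot x)\,\psi(\lambda(h) v^t J_i x)\,dx$ to which I apply the change of variable $y = h \cdot x$. Two identities drive this step: the Jacobian $|\det h|^{-1} = |\lambda(h)|^{-\dim V_i/2}$, which follows from $\det(h)^2 = \lambda(h)^{\dim V_i}$ (a consequence of $h^t J_i h = \lambda(h) J_i$); and the bilinear transformation $J_i h^{-1} = \lambda(h)^{-1} h^t J_i$, which converts the character argument $\lambda(h) v^t J_i h^{-1} y$ into $(h\cdot v)^t J_i y$. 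The scalar factors $|\lambda(h)|^{\pm \dim V_i/2}$ cancel, leaving a residual factor $\chi_{Q_i}(\lambda(h))$ separating the two sides.

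The final step is to check that $\chi_{Q_i}(\lambda(h)) = 1$ for the quadratic forms in the filtration. Since each $J_i$ is a direct sum of hyperbolic planes $\left(\begin{smallmatrix}0 & 1 \\ 1 & 0\end{smallmatrix}\right)$, a direct computation yields $(-1)^{\dim V_i/2}\det J_i = 1$ for $i = 1, 2, 3$, so the Hilbert-symbol character $\chi_{Q_i}$ is identically trivial. This closes the argument. The main obstacle is the Weyl-element case — in particular the bookkeeping that combines the Jacobian scaling, the transformation of $J_i$ under $h^{-1}$, and the triviality of $\chi_{Q_i}$ into a single identity; the remaining cases are either direct substitution or follow from the multiplicativity principle above.
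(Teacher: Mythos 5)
Your proof is correct. The paper itself gives no proof of this lemma; it simply cites \cite[Lemma 3.1]{getz2019summation}, so there is no internal argument to compare against. Your direct verification on the Bruhat generators is the standard route (and is essentially what the cited reference does). All three cases check out: the torus case is immediate since $g^h = g$ there; the unipotent case uses $Q_i(h\cdot v) = \lambda(h)Q_i(v)$ exactly as you say; and the Weyl-element case, after the substitution $y = h\cdot x$ with Jacobian $|\det h|^{-1} = |\lambda(h)|^{-\dim V_i/2}$ and the identity $J_i h^{-1} = \lambda(h)^{-1} h^t J_i$ (both consequences of $h^t J_i h = \lambda(h) J_i$), leaves the residual factor $\chi_{Q_i}(\lambda(h))$ that you correctly identify.

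The one place where you diverge from what the reference would do is the final step. You dispose of $\chi_{Q_i}(\lambda(h))$ by observing that, for the specific filtration in this paper, each $J_i$ is a sum of hyperbolic planes and hence $(-1)^{\dim V_i/2}\det J_i$ is a square ($1$, $1$, $1$ for $i = 1,2,3$), making $\chi_{Q_i}$ identically trivial. The reference instead invokes the general fact (its Lemma 3.2, which the present paper also cites elsewhere, e.g. in Lemma \ref{lemma:F h}) that $\chi_Q$ is trivial on the image of the similitude norm for \emph{any} nondegenerate even-dimensional $Q$. Your specialization is perfectly valid for the paper's setting and is arguably more elementary; it just means your argument as written does not prove the lemma at the level of generality suggested by the notation $\mathrm{GO}_{Q_i}$, whereas the cited result does.
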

The Weil representation $\rho_i$ of $\mathrm{SL}_2(\mathbb{A}_F)$ defined on the Schwartz space  $\mathcal{S}(V_i(\mathbb{A}_F))$  can be extended to the representation on the Schwartz space 
$\mathcal{S}(V_i(\mathbb{A}_F) \oplus \mathbb{A}_F^2)$ by
\begin{equation*}
\begin{split}
r_i(g) : \mathcal{S}(V_i(\mathbb{A}_F)) \otimes \mathcal{S}(\mathbb{A}_F^2) &\to \mathcal{S}(V_i(\mathbb{A}_F)) \otimes \mathcal{S}(\mathbb{A}_F^2) \\ 
f_1 \otimes f_2 &\mapsto \left( (\xi, v) \mapsto \rho_i(g)f_1(\xi) f_2(g^t v) \right).
\end{split}
\end{equation*}
Define a partial Fourier transform of $\mathcal{S}(V_i(\mathbb{A}_F) \oplus \mathbb{A}_F^2)$ in the second variable of $\mathbb{A}^2_F$ by
\begin{equation}
\label{eq:partial Fourier transform}
\begin{split}
\mathcal{F}_{2, i}: \mathcal{S}(V_{i+1}(\mathbb{A}_F)) &\to \mathcal{S}(V_i(\mathbb{A}_F) \oplus \mathbb{A}_F^2) \\
f &\mapsto \left( (\xi, u_1, u_2) \mapsto \int_{\mathbb{A}_F} f(\xi, u_1, x) \psi(u_2x) dx \right).
\end{split}
\end{equation}
Then we have the following property \cite[Lemma 4.2]{getz2023summation}
\begin{lemma}
\begin{equation} \label{eq: F2 commutative}
\begin{split}
\mathcal{F}_{2, i} \circ \rho_{i+1}(g)  = r_i(g) \circ \mathcal{F}_{2, i}, \quad g \in \mathrm{SL}_2(\mathbb{A}_F).
\end{split}
\end{equation}
\end{lemma}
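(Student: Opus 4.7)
The plan is to verify the intertwining identity \eqref{eq: F2 commutative} locally at each place $v$ and on a generating set of $\mathrm{SL}_2(F_v)$. Since both sides are continuous in $g$, it suffices to check the identity on the torus elements $\mathrm{diag}(a, a^{-1})$, the unipotents $n_t = \left(\begin{smallmatrix} 1 & t \\ 0 & 1 \end{smallmatrix}\right)$, and the Weyl element $w = \left(\begin{smallmatrix} 0 & 1 \\ -1 & 0 \end{smallmatrix}\right)$. The essential structural input is the block decomposition from the filtration: writing $V_{i+1} = V_i \oplus F^2$ with coordinates $(\xi, u_1, u_2)$, the added $2 \times 2$ block of $J_{i+1}$ is (after the changes of variables already made in the text) the hyperbolic form $J_1 = \left(\begin{smallmatrix} 0 & 1 \\ 1 & 0 \end{smallmatrix}\right)$, so $Q_{i+1}(\xi, u_1, u_2) = Q_i(\xi) + u_1 u_2$. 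Since adjoining a hyperbolic plane preserves both the Weil index and the discriminant character, one has $\gamma(Q_{i+1}) = \gamma(Q_i)$, and $\chi_{Q_{i+1}} = \chi_{Q_i}$ as already noted in the text.

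For the torus $g = \mathrm{diag}(a, a^{-1})$, I would compute both sides directly. Applying $\rho_{i+1}(g)$ produces a factor $\chi_{Q_i}(a)\,|a|^{(\dim V_i + 2)/2}$ and rescales $f$ to $f(a\xi, a u_1, a x)$; performing the change of variables $x \mapsto a^{-1} x$ in the Fourier integral extracts a $|a|^{-1}$ and leaves $\chi_{Q_i}(a)\,|a|^{\dim V_i/2}\,\mathcal{F}_{2,i}(f)(a\xi, a u_1, a^{-1} u_2)$. Since $g^t = g$, this agrees with $r_i(g)\mathcal{F}_{2,i}(f)$ termwise. For the unipotent $g = n_t$, I would use the factorization $\psi(t Q_{i+1}) = \psi(t Q_i)\,\psi(t u_1 u_2)$: pulling $\psi(t Q_i(\xi))$ outside the Fourier integral and absorbing $\psi(t u_1 x)$ as a shift of the dual variable yields $\psi(t Q_i(\xi))\,\mathcal{F}_{2,i}(f)(\xi, u_1, u_2 + t u_1)$. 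Since $g^t (u_1, u_2) = (u_1,\, t u_1 + u_2)$, this matches $r_i(g) \mathcal{F}_{2,i}(f)$ exactly.

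The Weyl element is where the real work lies. Expanding $\mathcal{F}_{2,i}(\rho_{i+1}(w) f)(\xi, u_1, u_2)$ gives $\gamma(Q_{i+1})$ times an iterated integral in $(\eta, y_1, y_2, x)$ with total exponential kernel $\psi(\xi^t J_i \eta + u_1 y_2 + x y_1 + u_2 x)$. Interchanging the $x$- and $y_1$-integrals, the inner integral $\int \psi(x(y_1 + u_2))\, dx$ becomes the Dirac distribution $\delta(y_1 + u_2)$, which specializes $y_1 = -u_2$. What remains is $\gamma(Q_{i+1}) \int f(\eta, -u_2, y_2)\,\psi(\xi^t J_i \eta + u_1 y_2)\, d\eta\, dy_2$. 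Recognizing $w^t (u_1, u_2) = (-u_2, u_1)$ and invoking $\gamma(Q_{i+1}) = \gamma(Q_i)$, I identify this as $\rho_i(w)$ applied to $\mathcal{F}_{2,i}(f)(\,\cdot\,, -u_2, u_1)$, which by definition is $r_i(w)\mathcal{F}_{2,i}(f)(\xi, u_1, u_2)$. The main subtlety is justifying the Fourier interchange distributionally for Schwartz inputs and carefully tracking the self-dual normalization of $dx$ fixed by $\psi$; this is where the conventions on the additive character enter. Once the three generator cases are settled locally, the global identity follows from the generation of $\mathrm{SL}_2(F_v)$ together with tensor-product decomposition of the Schwartz space.
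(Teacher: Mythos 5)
Your proof is correct, and the calculations on the three generators all check out. Note that the paper does not actually prove this lemma in the text---it is cited as Lemma 4.2 of Getz--Hsu \cite{getz2023summation}, so there is no internal proof to compare against; but your generator-by-generator verification is the standard route and is essentially certain to coincide with what that reference does.

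A few remarks on the details you handled correctly: the identity $Q_{i+1}(\xi,u_1,u_2)=Q_i(\xi)+u_1u_2$ is exactly what the paper's normalization of $J_{i+1}$ gives, and both $\gamma(Q_{i+1})=\gamma(Q_i)$ and $\chi_{Q_{i+1}}=\chi_{Q_i}$ are justified because a hyperbolic plane has trivial Weil constant and trivial discriminant character. In the torus case you correctly keep track of $|a|^{(\dim V_i + 2)/2}\cdot |a|^{-1}=|a|^{\dim V_i/2}$ after the substitution. For the unipotent, $n_t^t(u_1,u_2)=(u_1,tu_1+u_2)$ matches your shifted dual variable. For the Weyl element your iterated integral and the identification of the resulting kernel with $\rho_i(w)$ applied to $\mathcal{F}_{2,i}(f)(\cdot,-u_2,u_1)$ is correct. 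One small point: you do not actually need the continuity remark, since $n_t$ and $w$ generate $\mathrm{SL}_2(F_v)$ as an abstract group, so verifying the identity on those elements already gives it on the whole group. You should also make explicit that the ``Dirac delta'' step is just Fourier inversion applied after first integrating in $y_1$ for a fixed Schwartz slice, which keeps everything rigorous without invoking distributions; the self-dual normalization of $dx$ built into the choice of $\psi$ is exactly what ensures no extraneous constant appears.
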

Combining formulas \eqref{eq:twisted g} and \eqref{eq: F2 commutative}, we have
\begin{equation*}
\begin{split}
(\mathcal{F}_{2, i} \circ R(h) \circ \mathcal{F}_{2, i}^{-1}) \circ r_{i}(g)  = r_i(g^h) \circ (\mathcal{F}_{2, i} \circ R(h) \circ \mathcal{F}_{2, i}^{-1})
\end{split}
\end{equation*}
for $g \in \mathrm{SL}_2(\mathbb{A}_F)$, $h \in \mathrm{GO}_{Q_{i+1}}(\mathbb{A}_F)$.  Therefore if we define 
 \begin{equation*}
\begin{split}
\sigma_i(h) := \mathcal{F}_{2, i} \circ R(h) \circ \mathcal{F}_{2, i}^{-1},
\end{split}
\end{equation*}
as the representation of  orthogonal  $\mathrm{GO}_{Q_{i+1}}(\mathbb{A}_F)$ on the Schwartz space $\mathcal{S}(V_i(\mathbb{A}_F) \oplus \mathbb{A}_F^2)$, it satisfies
\begin{lemma}
\label{lemma:sigma r = r sigma}
\begin{equation}
\label{eq:sigma r = r sigma}
\begin{split}
\sigma_i(h) \circ r_{i}(g)  = r_i(g^{h}) \circ \sigma_i(h), \quad g \in \mathrm{SL}_2(\mathbb{A}_F), h \in \mathrm{GO}_{Q_{i+1}}(\mathbb{A}_F).
\end{split}
\end{equation}
\end{lemma}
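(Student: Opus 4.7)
The plan is to prove this by a direct commutative-diagram chase combining Lemma \eqref{eq:twisted g} and Lemma \eqref{eq: F2 commutative}; no analytic content beyond the invertibility of the partial Fourier transform is required. The conceptual point is that $\sigma_i(h)$ and $r_i(g)$ are simultaneously obtained by conjugating $R(h)$ and $\rho_{i+1}(g)$ acting on $\mathcal{S}(V_{i+1}(\mathbb{A}_F))$ by the intertwiner $\mathcal{F}_{2,i}$, so any commutation-up-to-twist relation between $R$ and $\rho_{i+1}$ will carry over verbatim to $\sigma_i$ and $r_i$ with the same conjugation $g \mapsto g^h$.

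Concretely, I would first rewrite \eqref{eq: F2 commutative} as $r_i(g) = \mathcal{F}_{2,i} \circ \rho_{i+1}(g) \circ \mathcal{F}_{2,i}^{-1}$, using that $\mathcal{F}_{2,i}$ is an isomorphism of Schwartz spaces because $\psi$ was chosen so that the measure $dx$ is self-dual on each $F_v$. Substituting this and unfolding the definition of $\sigma_i$ converts the left-hand side of \eqref{eq:sigma r = r sigma} into
\begin{equation*}
\sigma_i(h) \circ r_i(g) \;=\; \mathcal{F}_{2,i} \circ R(h) \circ \rho_{i+1}(g) \circ \mathcal{F}_{2,i}^{-1}.
\end{equation*}
Next I would invoke \eqref{eq:twisted g} at index $i+1$, which is exactly the level at which $R(h)$ here acts since $h \in \mathrm{GO}_{Q_{i+1}}(\mathbb{A}_F)$, to replace the central composition $R(h) \circ \rho_{i+1}(g)$ by $\rho_{i+1}(g^h) \circ R(h)$. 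A second application of \eqref{eq: F2 commutative} then pushes $\rho_{i+1}(g^h)$ through $\mathcal{F}_{2,i}$, producing $r_i(g^h) \circ \mathcal{F}_{2,i} \circ R(h) \circ \mathcal{F}_{2,i}^{-1} = r_i(g^h) \circ \sigma_i(h)$, which is the right-hand side.

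The only thing to keep track of is the index shift: both \eqref{eq:twisted g} and \eqref{eq: F2 commutative} must be applied at level $i+1$, not $i$, because $\sigma_i$ was constructed from $R$ acting on $\mathcal{S}(V_{i+1}(\mathbb{A}_F))$ while the output space is $\mathcal{S}(V_i(\mathbb{A}_F) \oplus \mathbb{A}_F^2)$. No single step is really an obstacle; if anything, the mildly delicate point is checking that the twist $g \mapsto g^h$ produced by \eqref{eq:twisted g} at level $i+1$ agrees with the twist that appears on the right-hand side of \eqref{eq:sigma r = r sigma}, which it does because the similitude norm $\lambda(h)$ depends only on $h$ and not on the ambient quadratic space in the embedding chain of Definition preceding Lemma \eqref{eq:twisted g}.
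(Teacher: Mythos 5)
Your proof is correct and follows exactly the paper's approach: the paper also proves the lemma by composing \eqref{eq:twisted g} at level $i+1$ with \eqref{eq: F2 commutative} and then recognizing the conjugate $\mathcal{F}_{2,i}\circ R(h)\circ\mathcal{F}_{2,i}^{-1}$ as the definition of $\sigma_i(h)$. You have merely made explicit the intermediate algebra that the paper compresses into the phrase ``combining formulas,'' and your attention to the index shift ($i+1$ vs.\ $i$) and to the fact that the twist $g\mapsto g^h$ depends only on $\lambda(h)$ is a faithful unpacking, not a deviation.
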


\subsection{Fourier transform and Poisson summation}
We want to define the Schwartz function on the space of quadrics $X_i(\mathbb{A}_F)$ and its corresponding Fourier transform.  For $f \in \mathcal{S}(V_i(\mathbb{A}_F) \oplus \mathbb{A}_F^2)$, we define the integral transform $I(f)$ as a smooth function of $X_i^{0}(\mathbb{A}_F)$ as
 \begin{equation}
\label{eq:I}
\begin{split}
I(f) : 
\xi &\mapsto \int_{N_2(\mathbb{A}_F)\backslash \mathrm{SL}_2(\mathbb{A}_F)} r_i(g) f(\xi, 0, 1) dg .
\end{split}
\end{equation}
Then we define the space of Schwartz functions as follows
\begin{definition}
The Schwartz space of $X_i(\mathbb{A}_F)$ is defined as the image of the integral transform of \eqref{eq:I}
\begin{equation*}
\begin{split}
\mathcal{S}(X_{i}(\mathbb{A}_F)) := \mathrm{Im}\left(\mathcal{S}(V_i(\mathbb{A}_F) \oplus \mathbb{A}_F^2)  \to C^{\infty}(X_i^0(\mathbb{A}_F))   \right).
\end{split}
\end{equation*}
\end{definition}
If denote by 
\begin{equation*}
\begin{split}
\mathcal{S}'(X_{i}(\mathbb{A}_F)) := \mathrm{Im}\left(\mathcal{S}(V_i(\mathbb{A}_F))  \to C^{\infty}(X_i^0(\mathbb{A}_F))   \right)
\end{split}
\end{equation*}
the space of smooth functions obtained by the restriction of Schwartz functions of $V_i$ to $X_i^0$, then we have \cite[Lemma 4.7]{getz2023summation}
\begin{proposition}
\label{prop:schwartz}
\begin{equation*}
\begin{split}
\mathcal{S}'(X_{i}(\mathbb{A}_F))  < \mathcal{S}(X_{i}(\mathbb{A}_F)). 
\end{split}
\end{equation*}
\end{proposition}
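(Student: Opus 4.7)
The plan is to start with $f \in \mathcal{S}(V_i(\mathbb{A}_F))$ and construct $\tilde{f} \in \mathcal{S}(V_i(\mathbb{A}_F) \oplus \mathbb{A}_F^2)$ satisfying $I(\tilde{f})|_{X_i^0} = f|_{X_i^0}$. The key preliminary observation, which also confirms that $I$ is well-defined on $X_i^0$, is that for $\xi \in X_i^0$ one has $Q_i(\xi) = 0$, so the unipotent subgroup $N_2 = \left\{\left(\begin{smallmatrix}1 & t \\ 0 & 1\end{smallmatrix}\right)\right\}$ of $\mathrm{SL}_2$ acts trivially on $f(\xi)$ via the Weil representation: $\rho_i\left(\begin{smallmatrix}1 & t \\ 0 & 1\end{smallmatrix}\right)f(\xi) = \psi(t Q_i(\xi)) f(\xi) = f(\xi)$. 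Together with $n^t e_2 = e_2$, this shows that the integrand $r_i(g)\tilde{f}(\xi,0,1)$ descends to $N_2 \backslash \mathrm{SL}_2$ when evaluated at $\xi \in X_i^0$, making the integral meaningful.

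I would try the tensor-product ansatz $\tilde{f}(\xi,v) = f(\xi)\varphi(v)$ with $\varphi \in \mathcal{S}(\mathbb{A}_F^2)$ to be determined. Using the Iwasawa decomposition $g = nak$ with $a = \mathrm{diag}(\alpha,\alpha^{-1})$, the identification $N_2 \backslash \mathrm{SL}_2 \cong \mathbb{A}_F^2 \setminus \{0\}$ via $g \mapsto g^t e_2 = \alpha^{-1} k^t e_2$, the Weil scaling $\rho_i(a)f(\xi) = \chi_{Q_i}(\alpha)|\alpha|^{\dim V_i/2} f(\alpha \xi)$, and the modulus $\delta_B(a) = |\alpha|^2$, the integral becomes
\begin{equation*}
I(\tilde{f})(\xi) = \int_K \int_{\mathbb{A}_F^{\times}} \chi_{Q_i}(\alpha) |\alpha|^{\dim V_i/2 - 2}\, [\rho_i(k)f](\alpha \xi)\, \varphi(\alpha^{-1} k^t e_2)\, d^{\times}\alpha\, dk.
\end{equation*}
This is a Tate-type Mellin integral in $\alpha$ averaged over $K$. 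I would then choose $\varphi$ so that the inner kernel $\int_K \varphi(\alpha^{-1} k^t e_2)\, [\rho_i(k)f](\alpha\xi)\, dk$ inverts the scaling $\chi_{Q_i}(\alpha)|\alpha|^{\dim V_i/2-2}$ and concentrates at $\alpha = 1$, leaving exactly $f(\xi)$. Existence of such a $\varphi$ reduces to producing a Schwartz function on $\mathbb{A}_F^2$ with prescribed Mellin behavior, which is standard: use characteristic functions of small open compact subsets at finite places together with a Gaussian-like bump at the archimedean place, and apply local Mellin/Fourier inversion.

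The main obstacle is bookkeeping the Weil-representation twisting $\chi_{Q_i}(\alpha)|\alpha|^{\dim V_i/2}$ against the Haar modulus on $\mathrm{SL}_2$ and verifying that the Mellin inversion is realized by a genuine Schwartz function rather than a distribution, especially at the archimedean place where the metaplectic character $\chi_{Q_i}$ is nontrivial. A cleaner alternative is to write $\tilde{f} = \mathcal{F}_{2,i}(F)$ for some $F \in \mathcal{S}(V_{i+1}(\mathbb{A}_F))$: by the intertwining $r_i(g) \circ \mathcal{F}_{2,i} = \mathcal{F}_{2,i} \circ \rho_{i+1}(g)$ of Lemma 2.6, one reexpresses
\begin{equation*}
I(\tilde{f})(\xi) = \int_{N_2 \backslash \mathrm{SL}_2(\mathbb{A}_F)} \int_{\mathbb{A}_F} [\rho_{i+1}(g) F](\xi, 0, x)\, \psi(x)\, dx\, dg,
\end{equation*}
and because $Q_{i+1}(\xi,0,x) = Q_i(\xi) = 0$ for $\xi \in X_i^0$, the slice $\{(\xi,0,x)\}$ sits inside $X_{i+1}^0$, recasting the problem as a theta-type $\mathrm{SL}_2$-averaging on the larger quadric $X_{i+1}$. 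In either approach the existence of $\tilde{f}$ is the content of \cite[Lemma 4.7]{getz2023summation}, whose construction I would follow with adjustments for the specific quadratic forms $Q_i$ appearing in the filtration.
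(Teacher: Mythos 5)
Your preliminary observation is correct and well placed: for $\xi\in X_i^0$ one has $Q_i(\xi)=0$, so $\rho_i\bigl(\begin{smallmatrix}1&t\\0&1\end{smallmatrix}\bigr)$ fixes the value at $\xi$, and together with $n^te_2=e_2$ this shows the integrand of $I$ descends to $N_2\backslash\mathrm{SL}_2$. The gap is in the main construction. Your single tensor-product ansatz $\tilde f=f\otimes\varphi$ leads, as you compute, to
\begin{equation*}
I(\tilde f)(\xi)=\int_K\int_{\mathbb{A}_F^\times}\chi_{Q_i}(\alpha)\,|\alpha|^{\dim V_i/2-2}\,[\rho_i(k)f](\alpha\xi)\,\varphi(\alpha^{-1}k^te_2)\,d^\times\alpha\,dk,
\end{equation*}
and the obstruction is the factor $\rho_i(k)f$ inside the $K$-integral, not the Mellin bookkeeping. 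The Weil representation of the maximal compact $K$ is genuinely nontrivial (at the archimedean place it is the oscillator action of $\mathrm{SO}(2)$; at finite places it carries local Weil indices and the quadratic character), so there is no Schwartz function $\varphi$ on $\mathbb{A}_F^2$ that both concentrates the $\alpha$-integral at $\alpha=1$ and pins $k$ to the identity — $\varphi$ would have to behave like a delta in the $k$-direction, which it cannot. If you instead take $\varphi$ to be $K$-invariant, you only recover the $K$-average $\int_K\rho_i(k)f\,dk$, not $f$. So no single tensor product $f\otimes\varphi$ produces $f|_{X_i^0}$ under $I$.

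This is precisely why the paper's own proof in the archimedean case (Proposition \ref{prop:Dixmier-Malliavin}) begins with the Dixmier--Malliavin theorem: it writes $f'=\sum_{i}\int_{\mathrm{SL}_2(\mathbb{R})}\phi_i(g)\,\rho(g)f_i\,dg$ with $\phi_i\in C_c^\infty(\mathrm{SL}_2(\mathbb{R}))$ and $f_i\in\mathcal{S}(V)$, descends to $N_2\backslash\mathrm{SL}_2$ using exactly your observation, and then defines $\tilde f(v,(0,1)g)=\sum_i\bigl(\int_{N_2}\phi_i(ng)\,dn\bigr)f_i(v)$ — a \emph{finite sum} of tensor products. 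The sum is not cosmetic; it is what lets the $k$-dependence be absorbed into the second tensor factor term by term, which is exactly what the single-ansatz Mellin argument cannot do. Your second variant, writing $\tilde f=\mathcal{F}_{2,i}(F)$ and passing to $X_{i+1}$ via the intertwining of the partial Fourier transform with $\rho_{i+1}$, is a valid rewriting but does not supply a surjectivity argument; it moves the same question one step up the filtration without resolving it. Deferring the full adelic statement to \cite[Lemma 4.7]{getz2023summation} is the same thing the paper does, but the constructive sketch you give to produce $\tilde f$ would fail as written; replace the single tensor product with a Dixmier--Malliavin (or Gårding) decomposition into a sum of $\rho$-smoothed terms and the argument closes.
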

The integral transform \eqref{eq:I}  is understood as the tensor products of $I_v$ defined at each local place $v$ of $F$. For  $f_v \in \mathcal{S}(V_i(F_v) \oplus F_v^2)$, $\xi \in X_i^{0}(F_v)$ we have
 \begin{equation*}
\begin{split}
I_v(f_v) : 
\xi & \mapsto \int_{N_2(F)\backslash \mathrm{SL}_2(F)} r_i(g) f_v(\xi, 0, 1) dg.
\end{split}
\end{equation*}
Then Proposition \ref{prop:schwartz}
implies given a function $f_v \in \mathcal{S}(V_i(F_v))$ over the local place $v$ of $F$ one can choose
$\tilde{f}_v \in  \mathcal{S}(V_i(\mathbb{A}_F) \oplus \mathbb{A}_F^2)$  such that $I_v(\tilde{f}_v)  = f_v|_{X_i^0(F_v)}$. We will give a proof of this at Archimedean place in the next section.  
Following \cite[lemma 4.5]{getz2023summation}, the regularized value of $I(f)$ at $0 \in X_i(\mathbb{A}_F)$ is defined as the special value of a Tate integral
\begin{equation*}
\begin{split}
Z_i(f, s) := \int_{\mathbb{A}^{\times}_F}   |a|^s \chi_{Q_i}(a) \int_{K} r_i(k) f(0_{V_i}, 0, a) dk  d^{\times} a.
\end{split}
\end{equation*}
This allows us to define the constant term as follows
\begin{equation*}
\begin{split}
c_i(f) := \begin{cases}
     \lim_{s \to 0} \frac{d}{ds} \left(s Z_i(f, s +2 - \frac{\mathrm{dim} V_i}{2})\right) \quad & \text{if $2 - \frac{\mathrm{dim} V_i}{2}$ is the pole of $Z_i(f, s)$}\\
     Z_i(f, 2 - \frac{\mathrm{dim} V_i}{2})    & \text{otherwise}.
\end{cases}
\end{split}
\end{equation*}

Define a symplectic Fourier transform of  $\mathcal{S}(V_i(\mathbb{A}_F) \oplus \mathbb{A}_F^2)$ on the $\mathbb{A}^2_F$ part
\begin{equation*}
\begin{split}
\mathcal{F}_{V_i}: \mathcal{S}(V_i(\mathbb{A}_F) \oplus \mathbb{A}_F^2) &\to \mathcal{S}(V_i(\mathbb{A}_F) \oplus \mathbb{A}_F^2) \\
f &\mapsto \left( (\xi, u_1, u_2) \mapsto \int_{\mathbb{A}^2_F} f(\xi, w_1, w_2) \psi(w_1 u_2 - w_2 u_1) dw_1 dw_2 \right).
\end{split}
\end{equation*}
It is easy to see $\mathcal{F}_{V_i}$ commutes with the extended Weil representation $r_i$ of $\mathrm{SL}_2(\mathbb{A}_F)$,
\begin{equation*}
\begin{split}
\mathcal{F}_{V_i} \circ r_{i}(g)  = r_i(g) \circ \mathcal{F}_{V_i}, \quad g \in \mathrm{SL}_2(\mathbb{A}_F).
\end{split}
\end{equation*}
Now if $\tilde{f}\in  \mathcal{S}(V_i(\mathbb{A}_F) \oplus \mathbb{A}_F^2)$ such that $f = I(\tilde{f}) \in \mathcal{S}(X_i(\mathbb{A}_F))$, then the Fourier transform of $f$ is defined by
\begin{equation}
\label{eq:Fx}
\begin{split}
\mathcal{F}_{X_i} : \mathcal{S}(X_i(\mathbb{A}_F)) &\to \mathcal{S}(X_i(\mathbb{A}_F)) \\
 f & \mapsto I(F_{V_i}(\tilde{f})).
\end{split}
\end{equation}

Next we define a transform
\begin{equation*}
\begin{split}
d_{i+1, i} :   \mathcal{S}(V_{i+1}(\mathbb{A}_F) \oplus \mathbb{A}_F^2) &\to \mathcal{S}(V_{i}(\mathbb{A}_F) \oplus \mathbb{A}_F^2) \\
f &\mapsto \mathcal{F}_{2, i}(f |_{V_{i+1}}).
\end{split}
\end{equation*}
For $i > i' \geq 0$, define a chain of those transforms
\begin{equation*}
\begin{split}
d_{i+1, i'} := d_{i'+1, i'} \circ \cdots \circ d_{i+1, i}: \mathcal{S}(V_{i+1}(\mathbb{A}_F) \oplus \mathbb{A}_F^2) &\to \mathcal{S}(V_{i'}(\mathbb{A}_F) \oplus \mathbb{A}_F^2).
\end{split}
\end{equation*}
By convention, let $d_{i, i}$ denote the identity operator.  It has the following property
\begin{lemma}
\label{d sigma}
Let $f \in \mathcal{S}(V_{i+1}(\mathbb{A}_F) \oplus \mathbb{A}_F^2)$. For $h \in \mathrm{GO}_{Q_{i+1}}(\mathbb{A}_F)$, we have
\begin{equation*}
\begin{split}
&d_{i+1, i}(\sigma_{i+1}(h)f) = \sigma_{i}(h) d_{i+1, i}(f). \\
\end{split}
\end{equation*}
\end{lemma}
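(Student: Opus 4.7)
The plan is to unwind the definitions on both sides of the identity and verify they produce the same integral expression, leveraging the fact that the embedding $\mathrm{GO}_{Q_{i+1}} \hookrightarrow \mathrm{GO}_{Q_{i+2}}$ from \eqref{eq:group embedding} acts trivially on the final coordinate of $V_{i+2}(F) = V_{i+1}(F) \oplus F^2$.

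The first step is a preliminary simplification of $\sigma_{i+1}(h)$. By definition $\sigma_{i+1}(h) = \mathcal{F}_{2,i+1} \circ R(h) \circ \mathcal{F}_{2,i+1}^{-1}$, where $\mathcal{F}_{2,i+1}$ is the partial Fourier transform with respect to the \emph{last} $F$-coordinate in the decomposition $V_{i+2} = V_{i+1} \oplus F \oplus F$. Since $h$ embedded in $\mathrm{GO}_{Q_{i+2}}$ sends $(v, a, b) \mapsto (h\cdot v, \lambda(h) a, b)$, it acts trivially on the Fourier-transformed coordinate. A one-line calculation using Fourier inversion then gives
\[
\sigma_{i+1}(h) f(v, a, b) = f(h\cdot v, \lambda(h) a, b),
\]
i.e., $\sigma_{i+1}(h)$ coincides with $R(h)$ under the identification $\mathcal{S}(V_{i+1}(\mathbb{A}_F)\oplus \mathbb{A}_F^2) = \mathcal{S}(V_{i+2}(\mathbb{A}_F))$.

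The second step computes the left-hand side directly. Writing $\xi \in V_i$, $u_1, u_2 \in \mathbb{A}_F$, and restricting a function on $V_{i+1}(\mathbb{A}_F)\oplus \mathbb{A}_F^2$ to $V_{i+1}$ by setting the extra coordinates to zero, we obtain
\[
d_{i+1, i}(\sigma_{i+1}(h) f)(\xi, u_1, u_2) = \int_{\mathbb{A}_F} f\bigl(h\cdot (\xi, u_1, x),\, 0, 0\bigr)\, \psi(u_2 x)\, dx,
\]
using the formula from step one and the fact that $h\cdot(\xi, u_1, x, 0, 0) = (h\cdot(\xi, u_1, x), 0, 0)$ (the extra coordinates remain zero because $\lambda(h)\cdot 0 = 0$).

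For the right-hand side, I would exploit that $d_{i+1,i}(f) = \mathcal{F}_{2,i}(f|_{V_{i+1}})$ is in the image of $\mathcal{F}_{2,i}$, so that
\[
\sigma_i(h)\, d_{i+1, i}(f) = \mathcal{F}_{2,i} \circ R(h) \circ \mathcal{F}_{2,i}^{-1} \circ \mathcal{F}_{2,i}(f|_{V_{i+1}}) = \mathcal{F}_{2,i}\bigl(R(h)\, (f|_{V_{i+1}})\bigr),
\]
and evaluating this at $(\xi, u_1, u_2)$ gives exactly the same integral. The identity follows. The main conceptual point — and the only step that is not formal — is the commutation of $\mathcal{F}_{2,i+1}$ with $R(h)$ in step one; everything else is bookkeeping of coordinates, analogous to the proof of Lemma \ref{lemma:sigma r = r sigma}. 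No serious obstacle is expected, as all integrals converge absolutely for Schwartz inputs and Fourier inversion applies at every step.
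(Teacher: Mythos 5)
Your proof is correct and follows essentially the same route as the paper: your step one re-derives the paper's Lemma~\ref{lemma:h} (that $\sigma_{i+1}(h)$ acts as $R(h)$ on the $V_{i+1}$-restriction), and your remaining steps reproduce the paper's telescoping identity $\mathcal{F}_{2,i}\circ R(h)=\sigma_i(h)\circ\mathcal{F}_{2,i}$ applied to $f|_{V_{i+1}}$. The only cosmetic difference is that you spell out the partial Fourier integrals explicitly where the paper cites the lemma and computes at the operator level.
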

\begin{proof}
By Lemma \ref{lemma:h} in the next section of local computations, 
\begin{equation*}
\begin{split}
(\sigma_{i+1}(h) f) |_{V_{i+1}}  = R(h) (f|_{V_{i+1}}).
\end{split}
\end{equation*}
Then the result follows from the composition of following operators applied to $f|_{V_{i+1}}$
\begin{equation*}
\begin{split}
& \mathcal{F}_{2, i} \circ R(h) =\mathcal{F}_{2, i} \circ R(h) \circ \mathcal{F}^{-1}_{2, i} \circ \mathcal{F}_{2, i} =  \sigma_i(h) \circ \mathcal{F}_{2, i}. \\
\end{split}
\end{equation*}
\end{proof}
The main result in \cite{getz2023summation} is the following Poisson summation formula
\begin{theorem}\label{thm:possion summation} 
\cite[Theorem 1.2]{getz2023summation}
Let $f \in \mathcal{S}(V_3(\mathbb{A}_F) \oplus \mathbb{A}_F^2)$.  Then
\begin{equation*}
\begin{split}
& \sum_{i = 1}^3 c_i(d_{3, i}(f)) + \sum_{i=1}^3 \sum_{\xi \in X_i^0(F)}  I(d_{3, i} (f)) (\xi) + d_{3, 0}(f) (0) \int_{[ \mathrm{SL}_2 ]} 1 dg   \\
& = \sum_{i = 1}^3 c_i(d_{3, i}(\mathcal{F}_{V_3}(f))) + \sum_{i=1}^3 \sum_{\xi \in X_i^0(F)} I(d_{3, i} (\mathcal{F}_{V_3}( f))) (\xi) + d_{3, 0}(\mathcal{F}_{V_3}(f)) (0) \int_{[ \mathrm{SL}_2 ]} 1 dg.
\end{split}
\end{equation*}
\end{theorem}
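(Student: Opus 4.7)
The plan is to construct a theta function on $\mathrm{SL}_2(\mathbb{A}_F)$ from $f$ via the extended Weil representation $r_3$, integrate it over the automorphic quotient $[\mathrm{SL}_2] := \mathrm{SL}_2(F) \backslash \mathrm{SL}_2(\mathbb{A}_F)$, and extract the identity by computing this integral in two different ways. The key observation is that the symplectic Fourier transform $\mathcal{F}_{V_3}$ is realized by the action through $r_3$ of the long Weyl element $w \in \mathrm{SL}_2(F)$; automorphicity under $\mathrm{SL}_2(F)$ then forces the symmetry $f \leftrightarrow \mathcal{F}_{V_3}(f)$ as soon as the two sides of the identity are both identified with the same regularized evaluation of the theta function.

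First I would form $\Theta(g, f) := \sum_{(\xi, v) \in V_3(F) \oplus F^2} r_3(g) f(\xi, v)$ and decompose the $v$-sum into $\mathrm{SL}_2(F)$-orbits on $F^2$, namely the zero orbit and the nonzero orbit $(0, 1) \cdot \mathrm{SL}_2(F)$ with stabilizer $N_2(F)$. Unfolding against the nonzero orbit produces a sum of the form $\sum_{\xi} I(f)(\xi)$, where $I$ is the integral transform defined in \eqref{eq:I}. The zero orbit contributes a sum $\sum_{\xi \in V_3(F)} r_3(g) f(\xi, 0)$ that must be regularized, and whose regularized evaluation will ultimately supply the constant terms $c_i(\cdot)$ together with the $d_{3, 0}(f)(0)$ term.

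Next I would stratify the $\xi$-sum by the filtration $V_0 \subset V_1 \subset V_2 \subset V_3$. Writing $V_{i+1} = V_i \oplus F^2$ and applying Poisson summation in the last $F^2$-factor converts the sum over the subset $V_i \oplus \{0\} \subset V_{i+1}$ into a full sum over $V_i \oplus F^2$ for the transformed function $d_{i+1, i}(f)$. Iterating this descent from level $3$ down produces precisely the sums $\sum_{\xi \in X_i^0(F)} I(d_{3, i}(f))(\xi)$ for $i = 1, 2, 3$, together with the purely zero stratum contributing $d_{3, 0}(f)(0) \int_{[\mathrm{SL}_2]} 1\, dg$. Lemma \ref{d sigma} ensures that $\mathrm{GO}_{Q_i}$-equivariance passes coherently down the tower of descent operators and does not collide with the unfolding.

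The regularization at each stratum boundary is the hardest part. The value of $I(d_{3, i}(f))$ at $\xi = 0$ is not well defined, and the correct replacement is extracted through the Tate zeta integral $Z_i(\cdot, s)$ at $s = 2 - \tfrac{1}{2} \dim V_i$, which produces the constants $c_i(d_{3, i}(f))$; the polar/nonpolar dichotomy in the definition of $c_i$ reflects whether the quadratic character $\chi_{Q_i}$ is trivial on the norm-one ideles. Equality of the two sides then follows by computing the same regularized $[\mathrm{SL}_2]$-integral after right-translating by $w$ and using $r_3(w) = \mathcal{F}_{V_3}$. The main obstacle is to verify that the Tate-regularization prescription for $c_i$ matches on both sides of the formula: this requires the functional equation of $Z_i$ — which intertwines $\mathcal{F}_{V_i}$ with the appropriate gamma factor — to interact cleanly with the descent operators $d_{i+1, i}$ simultaneously across all strata, so that no residual boundary terms are left unaccounted for.
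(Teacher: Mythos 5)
The paper itself does not prove this theorem; it is quoted verbatim as \cite[Theorem 1.2]{getz2023summation}, so there is no in-paper argument to compare against. Your proposal is therefore an attempt to reconstruct the Getz--Hsu proof, and on that score the high-level architecture (theta function on $\mathrm{SL}_2(\mathbb{A}_F)$, unfold the $\mathbb{A}_F^2$-sum along $\mathrm{SL}_2(F)$-orbits to produce $I(\cdot)$, descend through the filtration $V_0\subset V_1\subset V_2\subset V_3$ via the $d_{i+1,i}$ operators, regularize the degenerate strata with the Tate integrals $Z_i$) is the right shape.

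However, the pivot of your argument is wrong, and it is a genuine gap rather than a cosmetic one. You claim that ``$\mathcal{F}_{V_3}$ is realized by the action through $r_3$ of the long Weyl element $w\in\mathrm{SL}_2(F)$,'' and you propose to obtain the identity by right-translating the $[\mathrm{SL}_2]$-integral by $w$. This cannot work: the paper records explicitly that $\mathcal{F}_{V_i}\circ r_i(g)=r_i(g)\circ\mathcal{F}_{V_i}$ for all $g\in\mathrm{SL}_2(\mathbb{A}_F)$, so $\mathcal{F}_{V_3}$ commutes with the entire image of $r_3$. If $\mathcal{F}_{V_3}$ were $r_3(w)$ this would force $w$ to act centrally, which it does not. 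Concretely, $r_3(w)$ acts as $\rho_3(w)$ on the $V_3$-factor (a genuine Fourier transform there) and merely permutes coordinates in the auxiliary $\mathbb{A}_F^2$, whereas $\mathcal{F}_{V_3}$ is the symplectic Fourier transform on $\mathbb{A}_F^2$ alone and leaves $V_3$ untouched. The correct identification, used in the paper's Lemma \ref{lemma:F h} and drawn from \cite[Proposition 4.3]{getz2023summation}, is that $\mathcal{F}_{V_i}=\sigma_i\!\left(\begin{smallmatrix}I_{V_i}&&\\&&1\\&1&\end{smallmatrix}\right)$, i.e.\ $\mathcal{F}_{V_3}$ is implemented through the $\sigma_3$-action of the $F$-rational orthogonal group element of $\mathrm{O}_{Q_4}(F)$ exchanging the two isotropic coordinates of the last hyperbolic plane. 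The exchange $f\leftrightarrow\mathcal{F}_{V_3}(f)$ on the two sides of the Poisson summation identity therefore comes from automorphy of the theta construction under $\mathrm{O}_{Q_4}(F)$ (built into the Weil-representation picture for the larger quadratic space $V_4$), not from automorphy under the $\mathrm{SL}_2(F)$ acting through $r_3$. Substituting $w\in\mathrm{SL}_2(F)$ as you propose leaves the statement unchanged by automorphy and produces nothing; you need the orthogonal group element. This is exactly why the theorem is formulated with $\sigma_i$ and $\mathrm{GO}_{Q_{i+1}}$ throughout, and why the descent operators $d_{i+1,i}$ are shown in Lemma \ref{d sigma} to intertwine the $\sigma$-actions (not the $r$-actions).

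Two smaller cautions. First, the regularization of the zero stratum and the extraction of the constants $c_i$ are considerably more delicate than you indicate: the relevant Tate integral $Z_i$ is evaluated at $s=2-\tfrac{1}{2}\dim V_i$, and whether that point is a pole depends on $\dim V_i$, not on triviality of $\chi_{Q_i}$ on norm-one ideles as you suggest. Second, the naive $[\mathrm{SL}_2]$-integral of the full theta series does not converge, so the orbit decomposition must be organized around truncation/regularization from the outset rather than performed formally and regularized only at the degenerate orbit; otherwise the boundary terms you hope will ``cancel cleanly'' are not even defined.
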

For $h \in B$, we write $\sigma_3(h)$ for $\sigma_3(\iota_3(h))$.  In the rest of the section we will state several properties when 
$f$ is replaced by $\sigma_3(h) f$ in the above Poisson summation formula.
\begin{lemma}
\label{lemma:V3}
Let $f \in \mathcal{S}(V_3(\mathbb{A}_F) \oplus \mathbb{A}_F^2)$. For $h  \in B$ and $\xi \in X^0_3$, we have
\begin{equation*}
\begin{split}
I(\sigma_3(h) f)(\xi) &= |\lambda(h)|^{-1} I(f)(\iota(h) \cdot \xi), \\
I(\mathcal{F}_{V_3}  (\sigma_3(h) f))(\xi) &= |\lambda(h)|^{-3} I(\mathcal{F}_{V_3}(f))(\lambda(h)^{-1} \iota(h) \cdot \xi).
\end{split}
\end{equation*}
\end{lemma}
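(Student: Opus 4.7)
The plan is to combine an explicit pointwise description of $\sigma_3(h) F$ with two successive changes of variables on $N_2 \backslash \mathrm{SL}_2(\mathbb{A}_F)$. Writing $\lambda = \lambda(h)$ throughout, the starting point is to unwind $\sigma_3(h) = \mathcal{F}_{2,3} \circ R(\iota_3(h)) \circ \mathcal{F}_{2,3}^{-1}$ together with the block form $\iota_3(h) = \mathrm{diag}(\iota(h), \lambda, 1)$ acting on $V_4 = V_3 \oplus F^2$. Because $\iota_3(h)$ acts trivially on the last coordinate of $V_4$, the partial Fourier transform $\mathcal{F}_{2,3}$ pulls through $R(\iota_3(h))$ cleanly and one obtains the key pointwise identity
\[
\sigma_3(h) F(\xi, u_1, u_2) = F(\iota(h) \xi,\, \lambda u_1,\, u_2);
\]
specializing at $(\xi, 0, 1)$ gives simply $\sigma_3(h) F(\xi, 0, 1) = F(\iota(h)\xi, 0, 1)$, with no Jacobian.

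For the first identity, I would use Lemma \ref{lemma:sigma r = r sigma} in the rearranged form $r_3(g)\sigma_3(h) = \sigma_3(h) r_3(g^{h^{-1}})$ to commute $\sigma_3(h)$ past $r_3(g)$ and then apply the formula above, giving
\[
I(\sigma_3(h) f)(\xi) = \int_{N_2 \backslash \mathrm{SL}_2(\mathbb{A}_F)} r_3(g^{h^{-1}}) f(\iota(h)\xi, 0, 1)\, dg.
\]
Parametrize $N_2 \backslash \mathrm{SL}_2$ by the bottom row $(c, d) \in \mathbb{A}_F^2 \setminus \{0\}$, with $dg = dc\, dd$. A direct matrix computation shows that $g^{h^{-1}}$ has bottom row $(\lambda c, d)$, so the substitution $g \mapsto g^{h^{-1}}$ is the affine map $(c,d) \mapsto (\lambda c, d)$ with Jacobian $|\lambda|$; this produces the single factor $|\lambda|^{-1}$ in front of $I(f)(\iota(h)\xi)$.

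For the second identity, I would first compute how $\mathcal{F}_{V_3}$ interacts with $\sigma_3(h)$. Substituting the formula from the first paragraph into the defining integral of $\mathcal{F}_{V_3}$ and rescaling $w_1 \mapsto w_1/\lambda$ yields
\[
\mathcal{F}_{V_3}(\sigma_3(h) F)(\xi, u_1, u_2) = |\lambda|^{-1} \mathcal{F}_{V_3}(F)(\iota(h)\xi,\, u_1,\, u_2/\lambda).
\]
Since $\mathcal{F}_{V_3}$ commutes with $r_3$, repeating the intertwining argument above reduces $I(\mathcal{F}_{V_3}(\sigma_3(h) f))(\xi)$ to
\[
|\lambda|^{-1} \int_{N_2 \backslash \mathrm{SL}_2(\mathbb{A}_F)} r_3(g^{h^{-1}})\, \mathcal{F}_{V_3}(f)(\iota(h)\xi, 0, 1/\lambda)\, dg.
\]
The evaluation point has shifted from $(0, 1)$ to $(0, 1/\lambda)$, and I would handle this with the Weil-representation formula $r_3(a_\lambda) G(\xi, 0, 1) = \chi_{Q_3}(\lambda) |\lambda|^3 G(\lambda \xi, 0, 1/\lambda)$ to rewrite the integrand as $\chi_{Q_3}(\lambda)^{-1} |\lambda|^{-3} r_3(a_\lambda g^{h^{-1}})\, \mathcal{F}_{V_3}(f)(\lambda^{-1} \iota(h) \xi, 0, 1)$. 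The character $\chi_{Q_3}$ is trivial here: a short Hilbert-symbol computation using the explicit $J_3 = \mathrm{diag}(J_1, J_1, J_1)$ with $\det J_1 = -1$ shows $(-1)^{\dim V_3/2}\det J_3 = 1$. A final substitution $g \mapsto a_\lambda g^{h^{-1}}$ sends the bottom row by $(c, d) \mapsto (c, d/\lambda)$, contributing a Jacobian factor $|\lambda|$ when pulled through the integral. Compounding the three independent factors gives $|\lambda|^{-1} \cdot |\lambda|^{-3} \cdot |\lambda| = |\lambda|^{-3}$, with the evaluation shifted to $\lambda^{-1}\iota(h)\xi$, as claimed. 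The principal bookkeeping hurdle is keeping these three separate sources of $|\lambda|$ (the symplectic Fourier rescaling, the diagonal Weil-representation twist, and the two bottom-row changes of variables) aligned and confirming the triviality of $\chi_{Q_3}$.
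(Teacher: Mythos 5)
Your proof is correct, but it takes a more self-contained route than the paper. The paper disposes of this lemma in a single line: ``This follows from Proposition~\ref{prop:h},'' and Proposition~\ref{prop:h} itself is in turn cited from Getz--Liu (\cite[Proposition~4.3 and Corollary~4.4]{getz2023summation}). What you have done is essentially re-derive the $i=3$ case of that cited proposition from the definitions: the pointwise identity $\sigma_3(h)F(\xi,u_1,u_2)=F(\iota(h)\xi,\lambda u_1,u_2)$ is Lemma~\ref{lemma:h}, the intertwining $r_3(g)\sigma_3(h)=\sigma_3(h)r_3(g^{h^{-1}})$ is the rearranged form of Lemma~\ref{lemma:sigma r = r sigma}, and the two changes of variables on $N_2\backslash\mathrm{SL}_2(\mathbb{A}_F)$ and the rescaling inside $\mathcal{F}_{V_3}$ account for all the $|\lambda|$-powers. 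I checked the bookkeeping: $g^{h^{-1}}$ does have bottom row $(\lambda c,d)$, giving the factor $|\lambda|^{-1}$ for the first identity; and in the second identity the three factors $|\lambda|^{-1}$ (symplectic rescaling $w_1\mapsto w_1/\lambda$), $|\lambda|^{-3}=|\lambda|^{-\dim V_3/2}$ (from $\rho_3(a_\lambda)$), and $|\lambda|$ (from $g\mapsto a_\lambda g^{h^{-1}}$, which sends the bottom row to $(c,d/\lambda)$) combine to the claimed $|\lambda|^{-3}$, with the evaluation point moved to $\lambda^{-1}\iota(h)\xi$. Your direct verification of $\chi_{Q_3}\equiv 1$ via $(-1)^{3}\det J_3=1$ is also fine, though the paper handles triviality of $\chi_{Q_i}$ on similitude norms more generally via \cite[Lemma~3.2]{getz2019summation}. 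The only thing you elide is the standard identification of the invariant measure on $N_2\backslash\mathrm{SL}_2(\mathbb{A}_F)$ with Haar measure on bottom rows (and the check that $a_\lambda$ and $h$-conjugation normalize $N_2$, so the substitutions descend to the quotient) --- worth stating explicitly if you want a fully rigorous write-up. What your approach buys is an explicit audit of every power of $|\lambda|$, which the paper delegates to its reference; what the paper's approach buys is brevity and uniformity over all $i$, not just $i=3$.
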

\begin{proof}
This follows from Proposition \ref{prop:h}.
\end{proof}
\begin{lemma}
\label{lemma:V2}
Let $f \in \mathcal{S}(V_2(\mathbb{A}_F) \oplus \mathbb{A}_F^2)$.  Let $h  \in B$ be expressed as in the form \eqref{eq:h form}. 
Write $\sigma_2(h)$ for $\sigma_2(\iota_2(h))$. For $\xi = (x_1, x_2, x_3, x_4) \in X^0_2$,
\begin{equation*}
\begin{split}
I\left(\sigma_2\left(   \begin{matrix}
    a_t & \\
        &1
\end{matrix}  \right) f\right)(\xi) &=  I(f)\left( \left( \begin{matrix}
    I_2 & \\
        &t^2 & \\
        & & t^{-2}
\end{matrix}   \right) \xi \right), \\
I\left(\sigma_2\left(   \begin{matrix}
    n_u & \\
        &1
\end{matrix}  \right) f\right)(\xi) &=  I(f) \left( \left( \begin{matrix}
   1 &0 &u &0\\
   0 &1 &-u &0\\
   0 &0 &1  &0\\
   u &-u &u^2 &1
\end{matrix}   \right) \xi \right),\\
I \left( \sigma_2 \left(  \begin{matrix}
    d_{\mu} & \\
        &d_{\mu}
\end{matrix}  \right) f \right) (\xi) &=  |\lambda(h)|^{-3/2} I(f)(\xi), \\
I\left( \sigma_2\left(  \begin{matrix}
    1 & \\
        &a_t
\end{matrix}  \right) f \right)(\xi) &=  |t|^{2} I(f)(t^2 \xi), \\
I\left( \sigma_2 \left( \begin{matrix}
    1 & \\
        &n_v
\end{matrix}  \right)  f\right)(\xi) &=  \psi(v x_1 + v x_2) I(f)(\xi).
\end{split}
\end{equation*}
\end{lemma}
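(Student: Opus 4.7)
The plan is to verify each of the five formulas separately by explicit computation, working from the definition $\sigma_2(h) := \mathcal{F}_{2,2} \circ R(h) \circ \mathcal{F}_{2,2}^{-1}$ together with the explicit matrix form of $\iota(h_0)$ given in the preceding lemma. For each $h_0 \in B$ appearing in the statement, I would first read off $\iota_2(h_0) \in \mathrm{GO}_{Q_3}$, then trace how the induced right translation $R(\iota_2(h_0))$ on $\mathcal{S}(V_3(\mathbb{A}_F))$ is transported back to $\mathcal{S}(V_2(\mathbb{A}_F) \oplus \mathbb{A}_F^2)$ by the partial Fourier transform $\mathcal{F}_{2,2}$ in the $\mathbb{A}_F^2$-variable; evaluation at the distinguished point $(\xi,0,1)$ then yields the claimed identity.

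The natural division is into two kinds of cases. The three elements $(d_\mu, d_\mu)$, $(a_t,1)$, and $(n_u,1)$ all lie in the subgroup $B'$ which preserves $V_2$, and their image under $\iota$ is block-diagonal with respect to the decomposition $V_3 = V_2 \oplus F^2$. For these, $R(\iota_2(h_0))$ factors as a $V_2$-action tensored with an independent action on the $F^2$-factor; conjugating this $F^2$-action by $\mathcal{F}_{2,2}$ gives its Fourier-inverse together with the appropriate Jacobian, and substituting $(\xi, 0, 1)$ produces the stated transformations of $\xi$ as well as the scalar $|\lambda(h)|^{-3/2}$ in the case $(d_\mu, d_\mu)$.

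For the remaining two elements $(1, a_t)$ and $(1, n_v)$, the matrix $\iota_2(h_0)$ is not $V_2$-preserving: it couples the $V_2$-part with the last coordinate of $V_3$, and indeed its unipotent piece has the shape $u(x)$ of \eqref{eq:u form}. After conjugation by $\mathcal{F}_{2,2}$, a translation of the $V_2$-coordinate by a multiple of an $F^2$-coordinate becomes multiplication by an additive character linear in $\xi$, yielding the character $\psi(v x_1 + v x_2)$ for $(1, n_v)$; likewise a dilation on the $F^2$-coordinate by $t^{-2}$ becomes an inverse dilation $\xi \mapsto t^2 \xi$ on $\mathcal{S}(V_2 \oplus F^2)$ together with the Jacobian $|t|^2$, for $(1, a_t)$. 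In every case one invokes the commutation relation \eqref{eq:sigma r = r sigma} and the invariance of the Haar measure on $N_2 \backslash \mathrm{SL}_2(\mathbb{A}_F)$ to move $\sigma_2$ past the integration defining $I$ in \eqref{eq:I}.

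The main obstacle is the bookkeeping for the two mixing cases: one must check that when $\sigma_2(u(x))$ is applied to $f \in \mathcal{S}(V_2 \oplus \mathbb{A}_F^2)$ and evaluated at $(\xi,0,1)$, the quadratic term $Q_2(x)$ appearing in \eqref{eq:u form} is absorbed by the evaluation at $(0,1)$ rather than leaking into the final answer, while the linear term $J_2 x$ pairs against $\xi$ to produce precisely the character $\psi(v x_1 + v x_2)$. This is the same style of local unwinding that underlies the one-line proof of Lemma \ref{lemma:V3}, and I expect it to reduce to the local computations announced in Proposition \ref{prop:h} of the next section.
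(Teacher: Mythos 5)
Your plan is essentially the paper's proof: the lemma is dispatched by citing Propositions \ref{prop:h}, \ref{prop:t}, \ref{prop:a}, and \ref{prop:u}, which carry out exactly the local transport you describe (reading off $\iota_2(h_0)$, conjugating $R(\iota_2(h_0))$ by $\mathcal{F}_{2,2}$, moving $\sigma_2$ past the $I$-integral via the change of variables $g\mapsto g^h$, and evaluating at $(\xi,0,1)$). One small correction to your taxonomy: $\iota_2(1,a_t)=\mathrm{diag}(I_{V_2},t^2,t^{-2})$ is block-diagonal and does \emph{not} couple $V_2$ with the last $F^2$-coordinate; it differs from $(a_t,1)$ and $(n_u,1)$ not through off-diagonal mixing but because it lies in the $\mathbb{G}_m$-torus of the parabolic of $\mathrm{GO}_{Q_3}$ rather than in the image of $\mathrm{GO}_{Q_2}$ under \eqref{eq:group embedding}, so its transport goes through Lemma \ref{lemma:center} and Proposition \ref{prop:a} rather than Proposition \ref{prop:h} --- the outcome is the same as the Fourier--Jacobian computation you intended.
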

\begin{proof}
The first two equations follow from Proposition \ref{prop:h}  applied to 
\begin{equation*}
\begin{split}
\iota_2 \left( h   \right) = \left( \begin{matrix}
    h & \\
        &1 & \\
        & &1
\end{matrix}  \right)
\end{split}
\end{equation*}
if $h =  \left(   \begin{matrix}
    a_t & \\
        &1
\end{matrix}  \right)$ or $h = \left(   \begin{matrix}
    n_u & \\
        &1
\end{matrix}  \right)$. 
The third equation follows from Proposition \ref{prop:t} applied to
\begin{equation*}
\begin{split}
\iota_2 \left( \begin{matrix}
    d_{\mu} & \\
        &d_{\mu}
\end{matrix}   \right) =  \mu^2 I_{V_3}.
\end{split}
\end{equation*}
The forth equation follows from Proposition \ref{prop:a} applied to
\begin{equation*}
\begin{split}
\iota_2 \left( \begin{matrix}
    1 & \\
        &a_{t}
\end{matrix}   \right) = \left( \begin{matrix}
    I_{V_2} & \\
        &t^2 & \\
        & &t^{-2}
\end{matrix}  \right).
\end{split}
\end{equation*}
The last one follows from Proposition \ref{prop:u}.
\end{proof}
\begin{lemma}
\label{lemma:V1}
Let $f \in \mathcal{S}(V_1(\mathbb{A}_F) \oplus \mathbb{A}_F^2)$. 
For $h  \in B'$, write $\sigma_1(h)$ for $\sigma_1(\iota_1(h))$. 
For $\xi = (x_1, x_2) \in X^0_1$, we have
\begin{equation*}
\begin{split}
I\left(\sigma_1\left(   \begin{matrix}
    a_t & \\
        &1
\end{matrix}  \right) f\right)(\xi) &=  I(f)\left(t^2 \xi \right), \\
I\left(\sigma_1\left(   \begin{matrix}
    n_u & \\
        &1
\end{matrix}  \right) f\right)(\xi) &=  \psi(-u x_1 + u x_2) I(f)(\xi), \\
I \left( \sigma_1 \left(  \begin{matrix}
    d_{\mu} & \\
        &d_{\mu}
\end{matrix}  \right) f \right) (\xi) &=  |\lambda(h)|^{-1} I(f)(\xi).
\end{split}
\end{equation*}
\end{lemma}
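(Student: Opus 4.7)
The proof will parallel the proof of Lemma \ref{lemma:V2} by translating each element of $B'$ through the embedding $\iota_1: B' \to \mathrm{GO}_{Q_2}$ and then invoking the relevant propositions from the local computations in the next section. The three equations correspond to the three generators: the hyperbolic torus element $(a_t,1)$, the unipotent $(n_u,1)$, and the center $(d_\mu, d_\mu)$.

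First I would write down the image of each generator under $\iota_1$. Since $\iota_1$ is the restriction of $\iota: B \to \mathrm{GO}_{Q_3}$ to $B'$ followed by projection onto the $V_2$ block, one reads off from the formulas in the lemma preceding the definition of $\iota_1$ that
\[
\iota_1\!\left(\begin{matrix} a_t & \\ & 1 \end{matrix}\right) = \begin{pmatrix} I_{V_1} & \\ & \mathrm{diag}(t^2,t^{-2}) \end{pmatrix}, \qquad \iota_1\!\left(\begin{matrix} d_\mu & \\ & d_\mu \end{matrix}\right) = \mu^2 I_{V_2},
\]
while $\iota_1(n_u,1)$ is the $4\times 4$ matrix listed explicitly just before Definition–Lemma \ref{lemma:V1}, which matches the universal form $u(x)$ of \eqref{eq:u form} with a specific $x \in V_1(F)$ to be identified.

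Next, I would apply one proposition per generator. For $(d_\mu,d_\mu)$, Proposition \ref{prop:t} handles the scalar dilation $\mu^2 I_{V_2}$; the exponent $-1$ in $|\lambda(h)|^{-1}$ reflects $\dim V_1 = 2$ (contrast $-3/2$ for $V_2$ and $-1$ for $V_3$ in Lemmas \ref{lemma:V2}, \ref{lemma:V3}). For $(a_t,1)$, the action sits purely on the $F^2$ factor of $V_2 = V_1\oplus F^2$; Proposition \ref{prop:a} describes how the conjugation by the partial Fourier transform $\mathcal{F}_{2,1}$ turns this hyperbolic action into the dilation $\xi \mapsto t^2\xi$ on $X_1^0$, and the normalization is such that no $|t|^{\alpha}$ factor survives at the level of $I(f)$. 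For $(n_u,1)$, Proposition \ref{prop:u} gives an answer of the form $\psi(\langle x, \xi\rangle_1) I(f)(\xi)$, and matching the explicit matrix entries to \eqref{eq:u form} pins down $x$; computing $\langle x, \xi\rangle_1 = x^t J_1 \xi$ with $J_1 = \left(\begin{smallmatrix} 0 & 1 \\ 1 & 0\end{smallmatrix}\right)$ produces the required character $\psi(-u x_1 + u x_2)$.

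I expect the main obstacle to be a bookkeeping one: identifying the correct $x$ in the model form \eqref{eq:u form} and tracking the sign and coordinate conventions used in passing from the original $(x_2,x_5)$ coordinates of $V_1$ to the $(u,v)$ coordinates after the change of variables, so that the character extracted from Proposition \ref{prop:u} matches $\psi(-u x_1 + u x_2)$ rather than $\psi(u x_1 - u x_2)$ or $\psi(-u x_2 + u x_1)$ with the coordinates swapped. Once this is nailed down, the three cases are independent (because $B'$ has no $n_v$ component), so the proof assembles with no interaction terms and is formally complete once each proposition is invoked.
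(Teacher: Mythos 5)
Your proposal matches the paper's proof exactly: the paper cites Proposition \ref{prop:a} for the $(a_t,1)$ generator (after noting $\iota_1(a_t,1)=\mathrm{diag}(I_{V_1},t^2,t^{-2})$), Proposition \ref{prop:u} for the $(n_u,1)$ generator, and Proposition \ref{prop:t} for $(d_\mu,d_\mu)$ (with $\iota_1(d_\mu,d_\mu)=\mu^2 I_{V_2}$), which is precisely the dispatch you describe. One small correction on the bookkeeping you flagged: Proposition \ref{prop:u} outputs $\psi(x^t\xi)\,I(f)(\xi)$, not $\psi(x^t J_1\xi)\,I(f)(\xi)$ --- the bilinear form $J_1$ is already absorbed in the definition of $u(x)$ via the block $J_i x$, so once you solve $J_1 x = (u,-u)^t$ to get $x=(-u,u)^t$ you should take the plain dot product $x^t\xi = -u x_1 + u x_2$, with no extra $J_1$. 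Your $\psi(x^t J_1\xi)$ would flip the sign. The remark on exponents is also slightly off: the $|\lambda(h)|^{-1}$ in Lemma \ref{lemma:V3} comes from Proposition \ref{prop:h}, not Proposition \ref{prop:t}, so it is not directly comparable to the $-1$ here; but this is tangential and does not affect the argument.
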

\begin{proof}
The first equation follows from Proposition \ref{prop:a} applied to 
\begin{equation*}
\begin{split}
\iota_1 \left( \begin{matrix}
    a_t & \\
        &1
\end{matrix}   \right) = \left( \begin{matrix}
    I_{V_1} & \\
        &t^2 & \\
        & &t^{-2}
\end{matrix}  \right).
\end{split}
\end{equation*}
The second equation follows from Proposition \ref{prop:u}.
The third equation follows from Proposition \ref{prop:t} applied to
\begin{equation*}
\begin{split}
\iota_1 \left( \begin{matrix}
    d_{\mu} & \\
        &d_{\mu}
\end{matrix}   \right) =  \mu^2 I_{V_2}.
\end{split}
\end{equation*}
\end{proof}
Let $f \in \mathcal{S}(V_3(\mathbb{A}_F) \oplus \mathbb{A}_F^2)$. For $h \in B(\mathbb{A}_F)$ and $\gamma \in B(F)$, it is easy to see from Lemma \ref{lemma:V3} that
\begin{equation*}
\begin{split}
 \sum_{\xi \in X_3^0(F)}  I (  \sigma_3(h)  f) (\xi) = \sum_{\xi \in X_3^0(F)}  I(  \sigma_3(\gamma) \sigma_3(h )  f) (\xi) =
\sum_{\xi \in X_3^0(F)}  I(  \sigma_3(h \gamma )  f) (\xi).
\end{split} 
\end{equation*}
The Lemma \ref{d sigma} gives
\begin{equation*}
\begin{split}
I (d_{3,2} (\sigma_3(h) f))  = I(\sigma_2(h)d_{3,2}(f)).
\end{split}
\end{equation*}
Then from Lemma \ref{lemma:V2},  we get
\begin{equation*}
\begin{split}
\sum_{\xi \in X_2^0(F)}  I(d_{3, 2} (  \sigma_3(h)  f)) (\xi) = 
\sum_{\xi \in X_2^0(F)}  I(d_{3, 2} (  \sigma_3(h \gamma )  f)) (\xi).
\end{split} 
\end{equation*}
Similarly for $h \in B'(\mathbb{A}_F)$ and $\gamma \in B'(F)$,  applying Lemma \ref{d sigma} gives
\begin{equation*}
\begin{split}
I (d_{2,1} d_{3,2} (\sigma_3(h) f))  = I( d_{2,1}\sigma_2(h)d_{3,2}(f)) = I( \sigma_1(h) d_{2,1}d_{3,2}(f)).
\end{split}
\end{equation*}
From Lemma \ref{lemma:V1}, we have
\begin{equation*}
\begin{split}
\sum_{\xi \in X_1^0(F)}  I(d_{3, 1} (  \sigma_3(h)  f) )(\xi)  = 
\sum_{\xi \in X_1^0(F)}  I(d_{3, 1} (  \sigma_3(h\gamma)  f)) (\xi).
\end{split} 
\end{equation*}
Applying Lemma \ref{lemma:u} to  $\gamma = \left( \begin{matrix}
    1 & \\
        &n_v
\end{matrix}   \right) \in B(F)$ and Lemma \ref{lemma:a} to $\gamma = \left( \begin{matrix}
    1 & \\
        &a_{t}
\end{matrix}   \right) \in B(F)$ gives
\begin{equation*}
\begin{split}
I (d_{2,1} d_{3,2} (\sigma_3(h \gamma) f))  = I( d_{2,1}\sigma_2(\gamma)d_{3,2}(\sigma_3(h)f)) = I( d_{2,1}d_{3,2}( \sigma_3(h)f)).
\end{split}
\end{equation*}
The same arguments apply to $\mathcal{F}_{V_3}(f)$ by first applying Lemma \ref{lemma:F h}. Therefore by combining all results from the section of local computations we have shown
\begin{proposition}
Let $f \in \mathcal{S}(V_3(\mathbb{A}_F) \oplus \mathbb{A}_F^2)$. For $h \in B(\mathbb{A}_F)$ and $\gamma \in B(F)$,
\begin{equation*}
\begin{split}
\sum_{\xi \in X_i^0(F)}  I(d_{3, i} (  \sigma_3(h)  f)) (\xi) &=
\sum_{\xi \in X_i^0(F)}  I(d_{3, i} (  \sigma_3(h \gamma)  f)) (\xi), \\ \sum_{\xi \in X_i^0(F)}  I(d_{3, i} ( \mathcal{F}_{V_3} (\sigma_3(h)  f))) (\xi) &=
\sum_{\xi \in X_i^0(F)} I(d_{3, i} ( \mathcal{F}_{V_3} (\sigma_3(h \gamma )  f))) (\xi).
\end{split}
\end{equation*}
\end{proposition}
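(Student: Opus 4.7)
The plan is to treat the three index values $i = 1, 2, 3$ separately. In each case the strategy is identical: commute $\sigma_3(\gamma)$ past $d_{3,i}$ using Lemma \ref{d sigma} to produce $\sigma_i(\gamma)$ (when defined), decompose $\sigma_i(\gamma)$ into the five elementary factors of the Iwasawa-type decomposition \eqref{eq:h form}, and apply Lemmas \ref{lemma:V3}, \ref{lemma:V2}, and \ref{lemma:V1} to verify that each factor acts either as an $F$-rational linear permutation of $X_i^0(F)$ or by a global scalar/character that equals $1$ because $\gamma \in B(F)$.

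For $i = 3$, the formula of Lemma \ref{lemma:V3} packages the whole action of $\sigma_3(\gamma)$: one reindexes $\xi \mapsto \iota(\gamma)^{\pm 1}\xi$, and the prefactors $|\lambda(\gamma)|^{-1}$ and $|\lambda(\gamma)|^{-3}$ are trivial by the product formula on $\mathbb{A}_F^\times/F^\times$. For $i = 2$, I would write $\gamma$ as in \eqref{eq:h form} with parameters in $F^\times$ or $F$ and apply Lemma \ref{lemma:V2} factor by factor: the upper factors $\left(\begin{smallmatrix} n_u & \\ & 1\end{smallmatrix}\right)$ and $\left(\begin{smallmatrix} a_t & \\ & 1\end{smallmatrix}\right)$ permute $X_2^0(F)$; the scalar $\left(\begin{smallmatrix} d_\mu & \\ & d_\mu\end{smallmatrix}\right)$ contributes $|\lambda(\gamma)|^{-3/2} = 1$; the factor $\left(\begin{smallmatrix} 1 & \\ & a_t\end{smallmatrix}\right)$ combines the rescaling $\xi \mapsto t^2 \xi$ (again a permutation of $X_2^0(F)$) with the product-formula factor $|t|^2 = 1$; and $\left(\begin{smallmatrix} 1 & \\ & n_v\end{smallmatrix}\right)$ introduces the character $\psi(v(x_1+x_2))$, which is trivial on $X_2^0(F)$ since $\psi$ vanishes on $F$ and $v, x_1, x_2 \in F$.

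For $i = 1$ a genuine subtlety appears because $B$ does not stabilize $V_1$ and $\sigma_1$ is defined only on $B'$. I would split $\gamma = \gamma' \gamma''$ with $\gamma' \in B'(F)$ absorbing the $(n_u, a_t, d_\mu)$ factors of the first $B_2$-component and $\gamma''$ lying in the subgroup generated by $\left(\begin{smallmatrix} 1 & \\ & a_t\end{smallmatrix}\right)$ and $\left(\begin{smallmatrix} 1 & \\ & n_v\end{smallmatrix}\right)$. For $\gamma'$, Lemma \ref{d sigma} applied twice yields $d_{2,1}d_{3,2}(\sigma_3(\gamma')f) = \sigma_1(\gamma')d_{2,1}d_{3,2}(f)$, and Lemma \ref{lemma:V1} finishes the argument in the same factor-by-factor fashion (the scalar gives $|\lambda|^{-1}=1$, the $a_t$ factor permutes $X_1^0(F)$, and the $n_u$ factor gives $\psi(u(-x_1+x_2))$, trivial on $F$). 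For $\gamma''$, where $\sigma_1(\gamma'')$ is undefined, I would invoke the local computations (the Lemmas \ref{lemma:u} and \ref{lemma:a} referenced in the text) to show that $d_{2,1}\circ \sigma_2(\gamma'')$ already agrees with $d_{2,1}$ on the image of $d_{3,2}$, so that $\gamma''$ acts trivially after full descent to $V_1$. The Fourier-transformed identities are then obtained by first using Lemma \ref{lemma:F h} to intertwine $\mathcal{F}_{V_3}$ with $\sigma_3(\gamma)$ and rerunning each of the three cases with $f$ replaced by $\mathcal{F}_{V_3}(f)$.

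The main obstacle is the handling of $\gamma''$ in the $i = 1$ case: because $\gamma''$ does not preserve $V_1$, there is no ``commute and reindex'' argument available, and the triviality of its action must instead come from explicit local properties of the partial Fourier transform $\mathcal{F}_{2,1}$ encoded in $d_{2,1}$. Everything else is bookkeeping given the factor-by-factor formulas in Lemmas \ref{lemma:V3}--\ref{lemma:V1}, the product formula for $F^\times$, and the triviality of $\psi$ on the diagonal $F \subset \mathbb{A}_F$.
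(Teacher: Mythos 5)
Your proposal is correct and follows the paper's argument step for step: the $i=3$ case by reindexing via Lemma \ref{lemma:V3} and the product formula, the $i=2$ case by descending through Lemma \ref{d sigma} and applying Lemma \ref{lemma:V2} factor by factor, and the $i=1$ case by splitting $\gamma$ into a $B'(F)$ part handled through $\sigma_1$ and Lemma \ref{lemma:V1} and a residual $(1,a_t)$, $(1,n_v)$ part disposed of directly by the local Lemmas \ref{lemma:u} and \ref{lemma:a}, with the Fourier-transformed identities following from Lemma \ref{lemma:F h}. You correctly isolated the one genuine subtlety — that $\sigma_1$ is defined only on $B'$, so the second-component factors cannot be commuted past $d_{2,1}$ and must instead be shown to act trivially after restriction to $V_2$ — which is exactly how the paper resolves it.
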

Moreover, by Proposition \ref{prop:u} we have vanishing integration over the unipotent subgroup of $B$.
\begin{proposition}
\label{prop:vanishing nilpotent}
Let $f \in \mathcal{S}(V_3(\mathbb{A}_F) \oplus \mathbb{A}_F^2)$. For $h \in B(\mathbb{A}_F)$ and  $\xi \in X_i^0(F)$, $i = 1, 2$,  we have
\begin{equation*}
\begin{split}
 \int_{N(\mathbb{A}_F) / N(F)} I(d_{3, i} (  \sigma_3(h n )  f)) (\xi) dn &= 0, \\
\int_{N(\mathbb{A}_F) / N(F)}  I(d_{3, i} ( \mathcal{F}_{V_3} (\sigma_3(hn )  f))) (\xi) dn &= 0.
\end{split}
\end{equation*}
\end{proposition}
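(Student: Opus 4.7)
The plan is to expand $I(d_{3,i}(\sigma_3(hn)f))(\xi)$ as an explicit function of the two unipotent coordinates of $n \in N$, isolate an additive character of $\mathbb{A}_F/F$ in one of those coordinates, and integrate it to zero. First I would use Lemma \ref{d sigma} (once for $i=2$, twice for $i=1$, supplemented by the direct formulas of Lemma \ref{lemma:V2} on the pieces of $hn$ that fall outside $B'$) to rewrite the integrand as
\[
I(d_{3,i}(\sigma_3(hn)f))(\xi) \;=\; I(\sigma_i(hn)\, d_{3,i}(f))(\xi).
\]
Then, writing $hn$ in its normal form \eqref{eq:h form} and decomposing $n = n_u^B n_v^B$, I would apply Lemmas \ref{lemma:V1} and \ref{lemma:V2} to compute the effect of each piece on $I$. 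The crucial input from Proposition \ref{prop:u} is that one of the two unipotent generators of $N$ contributes only a character factor to the $I$-value: the $n_v^B$-factor for $i=2$ (last equation of Lemma \ref{lemma:V2}), and the $n_u^B$-factor acting at the $V_1$-stratum for $i=1$ (second equation of Lemma \ref{lemma:V1}).

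By translation-invariance of Haar measure on $N(\mathbb{A}_F)/N(F)$, I would absorb the fixed $n$-part of $h$ into the integration variable. The $N(\mathbb{A}_F)/N(F)$-integral then factorizes as the product of an integral in one unipotent coordinate against a pure character integral
\[
\int_{\mathbb{A}_F/F} \psi\!\bigl(t\,\ell_i(\xi)\bigr)\, dt
\]
in the other, where $\ell_i$ is an $F$-rational linear form on $V_i$. For $i=1$ one has $\ell_1(\xi) = x_2 - x_1$, and the constraint $x_1 x_2 = 0$ with $\xi \neq 0$ forces exactly one of $x_1, x_2$ to be nonzero, so $\ell_1(\xi) \neq 0$ and the character integral vanishes at once. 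For $i=2$ the analogous form is $\ell_2(\xi) = x_1 + x_2$, which controls the bulk of $X_2^0(F)$; on the residual locus where $\ell_2(\xi) = 0$, one must extract a second character, arising from the $n_u^B$-generator. That generator nominally acts on $I$ by a $\xi$-translation, but on the locus $\{x_1 + x_2 = 0\}$ this translation can be recast, via the partial Fourier transform built into $d_{3,2}$, as another instance of Proposition \ref{prop:u} at the $V_1$-stratum, producing a non-trivial character whose $\mathbb{A}_F/F$-integral again vanishes.

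The companion statement for $\mathcal{F}_{V_3}(f)$ follows by exactly the same recipe once one uses Lemma \ref{lemma:F h} to commute $\mathcal{F}_{V_3}$ past the orthogonal action of $\sigma_3$; since $\mathcal{F}_{V_3}$ is only a symplectic Fourier transform on the $\mathbb{A}_F^2$-factor, the character structure carried by $\sigma_i$ is preserved, and the same vanishing follows. The main obstacle I anticipate is the $i=2$ case on the subvariety $\{x_1 + x_2 = 0\} \cap X_2^0(F)$: the leading character argument is trivial there and one has to recover vanishing from the combined structure of the $n_u^B$-translation and the Fourier transform underlying $d_{3,2}$, which requires a careful stratum-by-stratum analysis paralleling the derivation of Lemma \ref{lemma:V1} from Proposition \ref{prop:u}.
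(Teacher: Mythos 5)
Your reduction of the integrand to a character times a translation, and your identification of the characters $\ell_1(\xi) = x_2 - x_1$ (for $i=1$) and $\ell_2(\xi) = x_1 + x_2$ (for $i=2$), is exactly what the paper's one-line appeal to Proposition~\ref{prop:u} amounts to: $\iota(n_v^B)$ is a $u(x)$-type unipotent for $\mathrm{GO}_{Q_3}$ with $x=(v,v,0,0)$, while $\iota_1(n_u)=u(y)$ with $y=(-u,u)$ is a $u(x)$-type unipotent for $\mathrm{GO}_{Q_2}$. For $i=1$ your argument is complete: on $X_1^0(F)=\{x_1x_2=0,\ (x_1,x_2)\neq 0\}$ exactly one coordinate vanishes, so $\ell_1\neq 0$ always; and the $n_v$-generator is annihilated by $d_{2,1}$ because $(\sigma_2(u(x))f)|_{V_2}=f|_{V_2}$, so it contributes no $v$-dependence to $I(d_{3,1}(\sigma_3(hn)f))(\xi)$. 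The $u$-integral therefore kills every $\xi\in X_1^0(F)$.

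For $i=2$, however, your proposed patch for the residual locus does not go through. You are right that $\ell_2$ vanishes on a nonempty subvariety of $X_2^0(F)$ (e.g.\ $(0,0,1,0)$ and $(1,-1,1,1)$, recalling $Q_2(\xi)=\xi_1\xi_2+\xi_3\xi_4$), and that on this locus the $v$-character contributes $1$. But the remaining $u$-dependence cannot be "recast via the partial Fourier transform built into $d_{3,2}$ as another instance of Proposition~\ref{prop:u} at the $V_1$-stratum": once $d_{3,2}$ has been applied, one sits with $g=d_{3,2}(f)\in\mathcal{S}(V_2(\mathbb{A}_F)\oplus\mathbb{A}_F^2)$, and $\iota(n_u^B)=\mathrm{diag}(\iota_1(n_u),1,1)$ lies in the Levi $\mathrm{GO}_{Q_2}\hookrightarrow\mathrm{GO}_{Q_3}$, not in its unipotent radical, so its effect on $I$ is governed by Proposition~\ref{prop:h}, namely the geometric translation $\xi\mapsto\iota_1(n_u)\xi$ — and this translation \emph{preserves} the residual locus, since $(\iota_1(n_u)\xi)_1+(\iota_1(n_u)\xi)_2=\xi_1+\xi_2$. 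There is no remaining partial Fourier transform to convert it into a character; passing through $d_{2,1}$ would land you at the $i=1$ stratum, not resolve the $i=2$ sum. So the $u$-integral is a genuine orbit integral $\int_{\mathbb{A}_F/F}I(\sigma_2(\iota(\lambda a))g)(\iota_1(n_u)\xi)\,du$ with no manifest vanishing, and the residual locus for $i=2$ is left open by your argument. (The paper's own proof, which consists only of the sentence invoking Proposition~\ref{prop:u}, does not spell out how this locus is handled either; so the difficulty you flag is real and not merely a misreading of the intended route.)
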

In the next section we are going to define and study the zeta integrals of group $B$ at the Archimedean place. For this we choose a test function
\begin{equation*}
\begin{split}
f = f_{\infty} 1_{V_4({\hat{\mathbb{Z}}})} \in \mathcal{S}(V_3(\mathbb{A}_\mathbb{Q}) \oplus \mathbb{A}_\mathbb{Q}^2)
\end{split}
\end{equation*}
Then by \cite[(2.1)]{getz2023summation} we have
\begin{proposition}
\begin{equation*}
\label{prop:summation at Archimedean place}
\begin{split}
\sum_{\xi \in X_i^0(\mathbb{Q})}  I(d_{3, i} (  f )) (\xi) &=
\sum_{n=1}^{\infty} \sum_{\xi \in n X_i^0(\mathbb{Z})} n^{\frac{\mathrm{dim \ V_i}}{2} - 2} I(d_{3, i} (f_{\infty})) (\xi), \\ 
\sum_{\xi \in X_i^0(\mathbb{Q})}  I(d_{3, i} (  \mathcal{F}_{V_3} ( f ))) (\xi) &=
\sum_{n=1}^{\infty} \sum_{\xi \in n X_i^0(\mathbb{Z})} n^{\frac{\mathrm{dim \ V_i}}{2} - 2} I(d_{3, i} (  \mathcal{F}_{V_3} (f_{\infty}))) (\xi).
\end{split}
\end{equation*}    
\end{proposition}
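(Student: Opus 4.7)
The plan is to exploit the factorization of the test function $f = f_\infty \otimes 1_{V_4(\hat{\mathbb{Z}})}$ across places, reduce the finite-place contribution to a local Tate-type integral that is already evaluated in \cite[(2.1)]{getz2023summation}, and rearrange the resulting classical sum into the claimed form.

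First, since the operators $\mathcal{F}_{V_3}$, $d_{3,i}$, and $I$ all factor as tensor products of their local analogues, one has for every $\xi \in X_i^0(\mathbb{Q})$
\begin{equation*}
I(d_{3,i}(f))(\xi) = I_\infty(d_{3,i,\infty}(f_\infty))(\xi) \cdot \prod_{p} I_p(d_{3,i,p}(1_{V_4(\mathbb{Z}_p)}))(\xi),
\end{equation*}
and likewise after inserting $\mathcal{F}_{V_3}$. Because $1_{V_4(\mathbb{Z}_p)}$ is the characteristic function of a self-dual $\mathbb{Z}_p$-lattice (under the chosen self-dual measures and additive characters $\psi_p$), both $\mathcal{F}_{V_3,p}$ and the partial Fourier transforms $\mathcal{F}_{2,j,p}$ composing $d_{3,i,p}$ preserve it. Hence the finite-place factor is the same for $f$ and for $\mathcal{F}_{V_3}(f)$; only the Archimedean factor differs.

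Next, I would evaluate the finite-place factor. After preservation, $d_{3,i,p}(1_{V_4(\mathbb{Z}_p)})$ is again the indicator function of a $\mathbb{Z}_p$-lattice in $V_i(\mathbb{Q}_p) \oplus \mathbb{Q}_p^2$. Using the Iwasawa decomposition of $\mathrm{SL}_2(\mathbb{Q}_p)$ and the fact that the split torus contributes the scaling factor $|a|^{\dim V_i/2}$ through the Weil representation, the integral over $N_2(\mathbb{Q}_p) \backslash \mathrm{SL}_2(\mathbb{Q}_p)$ collapses to a truncated Tate integral. It vanishes unless $\xi \in X_i^0(\mathbb{Z}_p)$, in which case it equals a finite geometric series with ratio $p^{\dim V_i/2 - 2}$ of length $v_p(\gcd(\xi)) + 1$. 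This is precisely the content of \cite[(2.1)]{getz2023summation}.

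Taking the product over all primes, multiplicativity yields that the total finite factor vanishes unless $\xi$ is an integer point, and otherwise equals $\sigma_{s_i}(\gcd(\xi))$, where $s_i = \dim V_i/2 - 2$ and $\sigma_s(m) = \sum_{d \mid m} d^s$. The last step is the divisor-sum rearrangement
\begin{equation*}
\sum_{\xi \in X_i^0(\mathbb{Z})} \sigma_{s_i}(\gcd(\xi)) \, I_\infty(d_{3,i,\infty}(f_\infty))(\xi) = \sum_{n=1}^{\infty} n^{s_i} \sum_{\xi \in n X_i^0(\mathbb{Z})} I_\infty(d_{3,i,\infty}(f_\infty))(\xi),
\end{equation*}
obtained by substituting $\xi = n\xi'$ and interchanging the orders of summation. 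The same derivation with $f_\infty$ replaced by $\mathcal{F}_{V_3,\infty}(f_\infty)$ yields the second identity. The main obstacle is the local computation at finite primes, but since that is already settled in the cited reference, the remaining work is essentially bookkeeping between adelic and classical sums.
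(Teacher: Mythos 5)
Your proposal is correct and follows essentially the same route as the paper, which simply invokes equation (2.1) of Getz--Hsu; you have unpacked that citation into its constituent steps (tensor factorization across places, self-duality of $V_4(\hat{\mathbb{Z}})$ under $J_4$, evaluation of the finite-place factor as a local geometric series giving $\sigma_{\dim V_i/2 - 2}(\gcd\xi)$, then the divisor-sum rearrangement). One small point worth making explicit when writing this up: the identification $n X_i^0(\mathbb{Z}) = \{\xi \in X_i^0(\mathbb{Z}) : n \mid \gcd(\xi)\}$ used in the final interchange relies on $X_i$ being a cone, so that $\xi$ and $\xi/n$ lie on the quadric simultaneously; and the self-duality claim should be checked in the coordinates in which $J_4 = \mathrm{diag}(J_1,\dots,J_1)$, since the change of variables $(x_2,x_5)\mapsto(u-v,u+v)$ is not unimodular over $\mathbb{Z}_2$.
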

\subsection{Local computations}
Let $v$ be a place of $F$ and denote by $F = F_v$ the local field in this section.
\begin{lemma}
\label{lemma:h}
Let  $f \in \mathcal{S}(V_i(F) \oplus F^2)$. For  $h \in \mathrm{GO}_{Q_{i}}(F)$ and $(\xi, \xi_1, \xi_2) \in V_i(F) \oplus F^2$, we have
\begin{equation*}
\begin{split}
\sigma_{i}(h) f(\xi, \xi_1, \xi_2)  = f(h \cdot \xi, \lambda(h) \xi_1, \xi_2).
\end{split}
\end{equation*}
In particular, 
\begin{equation*}
\begin{split}
(\sigma_{i}(h) f) |_{V_i}  = R(h) (f|_{V_i}). 
\end{split}
\end{equation*}

\end{lemma}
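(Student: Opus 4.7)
The plan is to unwind the definition $\sigma_i(h) = \mathcal{F}_{2,i} \circ R(\iota(h)) \circ \mathcal{F}_{2,i}^{-1}$, where $\iota: \mathrm{GO}_{Q_i} \hookrightarrow \mathrm{GO}_{Q_{i+1}}$ is the embedding \eqref{eq:group embedding}, and to exploit the crucial structural observation that $\iota(h) = \mathrm{diag}(h, \lambda(h), 1)$ acts trivially on the last coordinate of $V_{i+1} = V_i \oplus F \oplus F$. Since $\mathcal{F}_{2,i}$ and $\mathcal{F}_{2,i}^{-1}$ are the partial Fourier transform and its inverse \emph{only} in that third variable, the conjugation $\mathcal{F}_{2,i}(-)\mathcal{F}_{2,i}^{-1}$ will collapse to an identity there, leaving the action on the first two factors intact.

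Concretely, first I would compute the inverse partial Fourier transform
\[
\mathcal{F}_{2,i}^{-1} f(\xi, y_1, y_2) = \int_F f(\xi, y_1, x)\, \psi(-y_2 x)\, dx,
\]
then apply $R(\iota(h))$, which by the block-diagonal form of $\iota(h)$ gives $\int_F f(h\xi, \lambda(h) y_1, x)\psi(-y_2 x)dx$, and finally apply $\mathcal{F}_{2,i}$ in the variable $\xi_2$. Combining these three steps yields the iterated integral
\[
\sigma_i(h) f(\xi, \xi_1, \xi_2) = \int_F\!\!\int_F f(h\xi, \lambda(h)\xi_1, x)\, \psi(u(\xi_2 - x))\, dx\, du.
\]
Since the additive character $\psi$ was chosen so that $dx$ is the self-dual Haar measure, the inner double integral is exactly Fourier inversion and collapses to $f(h\xi, \lambda(h)\xi_1, \xi_2)$, which is the claimed formula.

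The \emph{In particular} clause then follows by evaluating at $\xi_1 = \xi_2 = 0$: this restricts functions on $V_i \oplus F^2$ to the subspace $V_i$, and the main formula immediately reads $(\sigma_i(h) f)|_{V_i}(\xi) = f(h\xi, 0, 0) = (f|_{V_i})(h\xi) = R(h)(f|_{V_i})(\xi)$, since $R(h)$ on $\mathcal{S}(V_i)$ is defined by composition with $h$.

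There is essentially no obstacle here — the argument is a direct bookkeeping computation whose only real content is Fourier inversion. The one point to get right is that the third coordinate in $V_{i+1} = V_i \oplus F \oplus F$, on which the partial Fourier transform acts, coincides with the coordinate on which $\iota(h)$ is the identity; this is immediate from the explicit block form in \eqref{eq:group embedding}, so no genuine difficulty arises.
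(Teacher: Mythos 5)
Your proposal is correct and follows essentially the same route as the paper: unwind $\sigma_i(h) = \mathcal{F}_{2,i} \circ R(\iota(h)) \circ \mathcal{F}_{2,i}^{-1}$, use the block form $\iota(h) = \mathrm{diag}(h, \lambda(h), 1)$ from \eqref{eq:group embedding} so that $R(\iota(h))$ acts trivially in the variable on which $\mathcal{F}_{2,i}$ transforms, and then collapse the resulting double integral by Fourier inversion. The \emph{In particular} clause follows exactly as you say by setting $\xi_1 = \xi_2 = 0$.
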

\begin{proof}
Calculate
\begin{equation*}
\begin{split}
R(h)  \mathcal{F}_{2, i}^{-1} ( f) ( \xi, \xi_1, \xi_2) & = \int_{F} f(h \cdot \xi, \lambda(h) \xi_1, v) \psi(-\xi_2 v) dv.
\end{split}
\end{equation*}
Applying $\mathcal{F}_{2, i}$ we get
\begin{equation*}
\begin{split}
\sigma_{i}(h) f(\xi, \xi_1, \xi_2) & = \int_{F} \int_{F} f(h \cdot \xi, \lambda(h) \xi_1, v) \psi(- u v) \psi(\xi_2, u) dv du \\
&  = f(h \cdot \xi, \lambda(h) \xi_1, \xi_2).
\end{split}
\end{equation*}
In particular it implies
\begin{equation*}
\begin{split}
\sigma_{i}(h) f(\xi, 0, 0)
&  = f(h \cdot \xi, 0, 0).
\end{split}
\end{equation*}
\end{proof}
\begin{lemma}
\label{lemma:u}
Let $f \in \mathcal{S}(V_i(F) \oplus F^2)$. For  $u(x)$ given by \eqref{eq:u form} and $(\xi, \xi_1, \xi_2) \in V_i(F) \oplus F^2$, we have 
\begin{equation*}
\begin{split}
\sigma_i(u(x))f(\xi, \xi_1, \xi_2) = f(\xi + (J_i x)^t \xi_1, \xi_1, \xi_2 ) \psi(\xi_2 ( x^t \xi +  Q_i(x) \xi_1  )).
\end{split}
\end{equation*}
In particular, 
\begin{equation*}
\begin{split}
(\sigma_{i}(u(x)) f) |_{V_i}  = f|_{V_i}. 
\end{split}
\end{equation*}
\end{lemma}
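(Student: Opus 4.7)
The plan is a direct unwinding of the definition $\sigma_i(h)=\mathcal{F}_{2,i}\circ R(h)\circ \mathcal{F}_{2,i}^{-1}$ applied to the specific unipotent element $u(x)\in \mathrm{GO}_{Q_{i+1}}(F)$, followed by Fourier inversion on $F$. The bulk of the work is bookkeeping, together with one use of the identity $J_i^2=I$ which holds because every $J_i$ is a direct sum of copies of $J_1=\left(\begin{smallmatrix}0&1\\1&0\end{smallmatrix}\right)$.

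First I would compute the action of $u(x)$ on $V_{i+1}(F)=V_i(F)\oplus F^2$ directly from the block matrix \eqref{eq:u form}: writing a vector as $(\eta,\eta_1,\eta_2)^t$ with $\eta\in V_i$ and $\eta_1,\eta_2\in F$, one obtains
\begin{equation*}
u(x)\cdot(\eta,\eta_1,\eta_2)^t=\bigl(\eta+\eta_1 J_i x,\ \eta_1,\ \eta_2-x^t\eta-\eta_1 Q_i(x)\bigr)^t.
\end{equation*}
(A quick sanity check that $Q_{i+1}$ is preserved uses $J_i^2=I$, which also confirms that the $J_i x$ appearing above coincides with the $(J_ix)^t\xi_1$ appearing in the statement up to the usual column/row conflation.)

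Second, I would set $g=\mathcal{F}_{2,i}^{-1}(f)$, so that $g(\eta,\eta_1,y)=\int_F f(\eta,\eta_1,u_2)\,\psi(-u_2 y)\,du_2$. Applying $R(u(x))$ and then $\mathcal{F}_{2,i}$ gives
\begin{equation*}
\sigma_i(u(x))f(\xi,\xi_1,\xi_2)=\int_F\!\int_F f\bigl(\xi+\xi_1 J_i x,\,\xi_1,\,u_2\bigr)\,\psi\!\bigl(-u_2(y-x^t\xi-\xi_1 Q_i(x))\bigr)\,du_2\,\psi(\xi_2 y)\,dy.
\end{equation*}
Now the $y$-integral is $\int_F \psi((\xi_2-u_2)y)\,dy=\delta(u_2-\xi_2)$ by Fourier inversion, which collapses the $u_2$-integration and yields
\begin{equation*}
\sigma_i(u(x))f(\xi,\xi_1,\xi_2)=f(\xi+\xi_1 J_i x,\,\xi_1,\,\xi_2)\,\psi\!\bigl(\xi_2(x^t\xi+\xi_1 Q_i(x))\bigr),
\end{equation*}
which is exactly the claim. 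The ``in particular'' statement then follows immediately by setting $\xi_1=\xi_2=0$: the character becomes trivial and the translation $\xi_1 J_i x$ vanishes, leaving $f(\xi,0,0)$.

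The only slightly delicate step is the use of Fourier inversion to obtain $\delta(u_2-\xi_2)$; making this rigorous for general $f\in \mathcal{S}(V_i(F)\oplus F^2)$ just requires the standard justification for swapping the order of the $y$- and $u_2$-integrals, which is permissible because $g=\mathcal{F}_{2,i}^{-1}(f)$ is Schwartz in all variables (at the Archimedean place) or locally constant with compact support in the non-Archimedean variables. No single step should be a real obstacle; the bookkeeping around the transpose notation $(J_ix)^t\xi_1$ in the statement is the most error-prone part and should be presented carefully.
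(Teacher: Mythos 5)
Your argument is correct and mirrors the paper's own proof: both unwind $\sigma_i = \mathcal{F}_{2,i}\circ R(u(x))\circ\mathcal{F}_{2,i}^{-1}$ into a double integral and then invoke Fourier inversion to collapse it; the paper does this via a translation in the outer variable, while you recognize the inner integral as a delta distribution after swapping the order of integration, which is an equivalent bookkeeping choice. The ``in particular'' step and the identification $\xi_1 J_i x = (J_i x)^t\xi_1$ are handled exactly as the paper intends.
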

\begin{proof}
Calculate
\begin{equation*}
\begin{split}
&\sigma_i(u(x))f(\xi, \xi_1, \xi_2) \\
&=\int_F \left(\int_F f(\xi + (J_i x)^t \xi_1, \xi_1, v ) \bar{\psi}((- x^t \xi -  Q_i(x) \xi_1 +u ) v) dv \right)  \psi(\xi_2u) du.
\end{split}
\end{equation*}
Changing variable $u \to u - (- x^t \xi -  Q_i(x) \xi_1)$ and making use of Fourier inversion we get
\begin{equation*}
\begin{split}
&\int_F \left(\int_F f(\xi + (J_i x)^t \xi_1, \xi_1, v ) \bar{\psi}(u v) dv \right)  \psi(\xi_2 u) \psi(\xi_2  (x^t \xi + Q_i(x) \xi_1      )    )   du \\
& =  f(\xi + (J_i x)^t \xi_1, \xi_1, \xi_2 ) \psi(\xi_2  (x^t \xi + Q_i(x) \xi_1      )    )   du. 
\end{split}
\end{equation*}
\end{proof}

\begin{lemma}
\label{lemma:a}
Let  $f \in \mathcal{S}(V_i(F) \oplus F^2)$. For  $(\xi, \xi_1, \xi_2) \in V_i(F) \oplus F^2$, we have 
\begin{equation*}
\begin{split}
\sigma_i \left( \begin{matrix}
    I_{V_{i}} & \\
      &t  & \\
        & &t^{-1}
\end{matrix}  \right) f  (\xi, \xi_1, \xi_2) &= |t| f(\xi, t \xi_1, t\xi_2).
\end{split}
\end{equation*}
\end{lemma}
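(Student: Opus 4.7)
The plan is to unfold the definition $\sigma_i(h) = \mathcal{F}_{2,i} \circ R(h) \circ \mathcal{F}_{2,i}^{-1}$ directly for the diagonal element
$$h = \begin{pmatrix} I_{V_{i}} & & \\ & t & \\ & & t^{-1} \end{pmatrix} \in \mathrm{GO}_{Q_{i+1}}(F),$$
which, under the identification $V_{i+1} = V_i \oplus F^2$ used throughout, fixes the $V_i$-coordinates and sends $(u_1,x) \mapsto (tu_1, t^{-1}x)$ on the $F^2$ complement. Since the bilinear form restricted to this $F^2$ is hyperbolic of the form $\bigl(\begin{smallmatrix} 0 & \pm 1 \\ \pm 1 & 0 \end{smallmatrix}\bigr)$, this element genuinely lies in $\mathrm{GO}_{Q_{i+1}}$ with similitude norm $1$, so $\sigma_i(h)$ is defined.

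First I would compute $\mathcal{F}_{2,i}^{-1}(f)(\xi,u_1,x) = \int_F f(\xi,u_1,u_2)\bar\psi(u_2 x)\,du_2$, then apply $R(h)$ by substitution to get $R(h)\bigl(\mathcal{F}_{2,i}^{-1}f\bigr)(\xi,u_1,x) = \mathcal{F}_{2,i}^{-1}f(\xi,tu_1,t^{-1}x)$, and finally apply $\mathcal{F}_{2,i}$ to obtain
$$\sigma_i(h)f(\xi,\xi_1,\xi_2) = \int_F\!\int_F f(\xi,t\xi_1,u_2)\,\bar\psi(u_2 t^{-1}x)\,\psi(\xi_2 x)\,du_2\,dx.$$
The change of variables $y = t^{-1}x$ introduces the Jacobian factor $|t|$ and converts the phase $\psi(\xi_2 x)$ into $\psi(t\xi_2\,y)$. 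Interchanging the order of integration (justified by Schwartz decay) and using Fourier inversion in the variable $u_2$, the inner integral collapses to $\delta(u_2 - t\xi_2)$, yielding $f(\xi,t\xi_1,t\xi_2)$ and producing the desired identity $\sigma_i(h)f(\xi,\xi_1,\xi_2)=|t|f(\xi,t\xi_1,t\xi_2)$.

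This is a routine calculation very much in the style of Lemma \ref{lemma:h} and Lemma \ref{lemma:u}; there is no real obstacle. The only step that requires a bit of care is tracking the dilation $x \mapsto t^{-1}x$ through both the Fourier kernel and the measure, since the self-duality of $dx$ (recalled in the choice of $\psi_v$) is what ensures that the Jacobian appears as a clean factor of $|t|$ with no additional $\gamma$-factor or sign.
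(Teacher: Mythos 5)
Your proof is correct and follows essentially the same route as the paper's: unfold $\sigma_i = \mathcal{F}_{2,i} \circ R(h) \circ \mathcal{F}_{2,i}^{-1}$, observe that $h$ scales the two extra coordinates by $t$ and $t^{-1}$, and absorb the dilation into the Fourier kernel by a change of variables, which produces the Jacobian $|t|$ before Fourier inversion collapses the double integral. (The paper's displayed formula contains an apparent typo, $a\xi_1$ for $t\xi_1$, which you have silently corrected.)
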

\begin{proof}
Calculate
\begin{equation*}
\begin{split}
\sigma_i \left( \begin{matrix}
    I_{V_{i}} & \\
      &t  & \\
        & &t^{-1}
\end{matrix}  \right) f  (\xi, \xi_1, \xi_2) &= \int_F \left( \int_F f(\xi,a \xi_1, v)  \bar{\psi}(t^{-1} u v) dv  \right) \psi(u \xi_2) du.
\end{split}
\end{equation*}
Changing variable $u \to t u$ and using Fourier inversion we get $|t| f(\xi, t \xi_1, t\xi_2)$.
\end{proof}
\begin{lemma}
\label{lemma:center}
Let  $f \in \mathcal{S}(V_i(F) \oplus F^2)$. We have 
\begin{equation*}
\begin{split}
\sigma_i \left( t I_{V_{i+1}}  \right) f  &=  \chi_{Q_i}(t)|t|^{-\frac{\mathrm{dim} V_i}{2} - 1} r_i \left( \begin{matrix}
    t& \\
      & t^{-1}
\end{matrix}  \right) f.
\end{split}
\end{equation*}
\end{lemma}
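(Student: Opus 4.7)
The approach is a direct unwinding of the definition $\sigma_i(h) = \mathcal{F}_{2,i} \circ R(h) \circ \mathcal{F}_{2,i}^{-1}$ for the particular element $h = tI_{V_{i+1}}$, followed by a comparison with the explicit formula for $r_i\!\left(\begin{smallmatrix} t & \\ & t^{-1}\end{smallmatrix}\right)$ coming from rule (1) of the Weil representation. Write $f$ for the Schwartz function on $V_i(F) \oplus F^2$ to which we apply $\sigma_i(tI_{V_{i+1}})$, and write $g := \mathcal{F}_{2,i}^{-1} f \in \mathcal{S}(V_{i+1}(F))$ with
\[
g(\xi, u_1, x) = \int_F f(\xi, u_1, u_2)\,\bar\psi(u_2 x)\,du_2.
\]

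The first step is to apply $R(tI_{V_{i+1}})$. Under the identification $V_{i+1} = V_i \oplus F^2$, scalar multiplication by $t$ rescales each coordinate by $t$, hence
\[
R(tI_{V_{i+1}}) g(\xi, u_1, x) = g(t\xi, tu_1, tx).
\]
The second step is to apply $\mathcal{F}_{2,i}$ and simplify. Substituting the formula for $g$, interchanging integrals, and changing variables $u_2 \mapsto u_2/t$ in the inner integral (producing a Jacobian factor $|t|^{-1}$) collapses via Fourier inversion to
\[
\sigma_i(tI_{V_{i+1}}) f(\xi, u_1, u_2) = |t|^{-1} f(t\xi, tu_1, t^{-1}u_2).
\]

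The third step compares this with $r_i\!\left(\begin{smallmatrix} t & \\ & t^{-1}\end{smallmatrix}\right) f$. By the definition of $r_i$ on the tensor product $\mathcal{S}(V_i) \otimes \mathcal{S}(F^2)$, together with rule (1) of the Weil representation $\rho_i$ acting on the $V_i$ factor, we obtain
\[
r_i\!\begin{pmatrix} t & \\ & t^{-1}\end{pmatrix} f(\xi, u_1, u_2) = \chi_{Q_i}(t)\,|t|^{\dim V_i/2} f(t\xi, tu_1, t^{-1}u_2),
\]
where the $F^2$-factor transforms by the transpose of $\mathrm{diag}(t, t^{-1})$, which equals itself. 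Dividing the two expressions and using $\chi_{Q_i}(t)^{-1} = \chi_{Q_i}(t)$ (since $\chi_{Q_i}$ takes values in $\{\pm 1\}$) yields the claimed identity.

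The main obstacle is purely bookkeeping: I need to be careful with the order of the Fourier transforms, with the sign convention in $\psi$ versus $\bar\psi$, and with which of the two $F$-coordinates is being transformed by $\mathcal{F}_{2,i}$ in the first place. Once that is pinned down, the change of variables $u_2 \mapsto u_2/t$ is the only nontrivial calculation, and the matching with $r_i$ is mechanical via rule (1). No deep argument is required beyond the two earlier lemmas describing $\mathcal{F}_{2,i}$, $R$, and $r_i$.
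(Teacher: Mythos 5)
Your computation is correct, and it is cleaner and more direct than what the paper does. The paper proves this lemma by first combining Lemma~\ref{lemma:a} and Lemma~\ref{lemma:h} to establish the intermediate identity~\eqref{eq:center formula}, then writing $t I_{V_{i+1}}$ as a product of two block matrices in $\mathrm{GO}_{Q_{i+1}}$ and applying that identity. You instead unwind $\sigma_i(t I_{V_{i+1}}) = \mathcal{F}_{2,i} \circ R(t I_{V_{i+1}}) \circ \mathcal{F}_{2,i}^{-1}$ directly: a single change of variables in the $\mathcal{F}_{2,i}$ integral gives $\sigma_i(t I_{V_{i+1}}) f(\xi, u_1, u_2) = |t|^{-1} f(t\xi, t u_1, t^{-1} u_2)$, and you match this against rule~(1) of the Weil representation together with the transpose action on the $F^2$-factor, which yields $r_i\!\bigl(\begin{smallmatrix} t & \\ & t^{-1}\end{smallmatrix}\bigr) f(\xi,u_1,u_2) = \chi_{Q_i}(t)|t|^{\dim V_i/2} f(t\xi, t u_1, t^{-1} u_2)$. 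What the paper's organization buys is reuse of already-proved building blocks and alignment with the style of the surrounding proofs (Propositions~\ref{prop:a} and~\ref{prop:t} exploit similar factorizations); what your approach buys is a shorter, self-contained derivation that avoids composing $\sigma_i$'s and tracking the order of matrix factors. One small thing to be careful about, which you flag correctly as ``bookkeeping'': the partial Fourier transform $\mathcal{F}_{2,i}$ acts in the last coordinate of $V_{i+1} = V_i \oplus F^2$ only, so the scaling by $t$ in that coordinate is what produces the Jacobian $|t|^{-1}$ and the $t^{-1}u_2$; the other two arguments scale as $t\xi, t u_1$. You handle this correctly.
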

\begin{proof}
By Lemma \ref{lemma:a} and \ref{lemma:h} we have
\begin{equation}
\label{eq:center formula}
\begin{split}
\sigma_i \left( \begin{matrix}
    I_{V_{i}} & \\
      &t  & \\
        & &t^{-1}
\end{matrix}  \right) f &=  \chi_{Q_i}(t)|t|^{\frac{\mathrm{dim} V_i}{2} + 1} r_i \left( \begin{matrix}
    t^{-1}& \\
      & t
\end{matrix}  \right)  \sigma_i \left( \begin{matrix}
    t I_{V_{i}} & \\
      &t^2  & \\
        & &1
\end{matrix}  \right) f.
\end{split}
\end{equation}
Since
\begin{equation*}
\begin{split}
t I_{V_{i+1}} = \left( \begin{matrix}
     I_{V_{i}} & \\
      &t^{-1}  & \\
        & &t
\end{matrix}  \right)  \left( \begin{matrix}
    t I_{V_{i}} & \\
      &t^2  & \\
        & &1
\end{matrix}  \right),
\end{split}
\end{equation*}
now we apply \eqref{eq:center formula} to get
\begin{equation*}
\begin{split}
\sigma_i \left( t I_{V_{i+1}}  \right) f  &=  \chi_{Q_i}(t)|t|^{-\frac{\mathrm{dim} V_i}{2} - 1} r_i \left( \begin{matrix}
    t& \\
      & t^{-1}
\end{matrix}  \right) f.
\end{split}
\end{equation*}
\end{proof}

\begin{lemma}
\label{lemma:F h}
Let  $f \in \mathcal{S}(V_i(F) \oplus F^2)$. For  $h \in \mathrm{GO}_{Q_{i}}(F)$, we have  
\begin{equation*}
\begin{split}
\mathcal{F}_{V_i}  (\sigma_i( h) f)  &= |\lambda(h)|^{-\frac{\mathrm{dim} V_i}{2} - 1}
r_i \left( \begin{matrix}
      \lambda(h) &  \\
        & \lambda(h)^{-1}
\end{matrix}  \right)  \sigma_i(\lambda(h)^{-1} h)  \mathcal{F}_{V_i} (f),
\end{split}
\end{equation*}
and 
\begin{equation*}
\begin{split}
\sigma_i( h) \mathcal{F}_{V_i}  ( f)  &= |\lambda(h)|^{-\frac{\mathrm{dim} V_i}{2} - 1}
r_i \left( \begin{matrix}
      \lambda(h) &  \\
        & \lambda(h)^{-1}
\end{matrix}  \right)    \mathcal{F}_{V_i} ( \sigma_i(\lambda(h)^{-1} h)f).
\end{split}
\end{equation*}
In particular, if $\lambda(h) = 1$, 
\begin{equation*}
\begin{split}
\mathcal{F}_{V_i}  (\sigma_i(h) f) &= \sigma_i(h) \mathcal{F}_{V_i} (f).
\end{split}
\end{equation*}
\end{lemma}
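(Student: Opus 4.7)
The proof proceeds by direct computation starting from the explicit formula of Lemma~\ref{lemma:h}. Writing $(\sigma_i(h) f)(\xi, w_1, w_2) = f(h\cdot\xi, \lambda(h) w_1, w_2)$ and inserting this into the defining integral of the symplectic Fourier transform, a change of variables $w_1 \mapsto \lambda(h)^{-1} w_1$ in the first $F^2$-coordinate absorbs the similitude factor and yields
\[
\mathcal{F}_{V_i}(\sigma_i(h)f)(\xi, u_1, u_2) = |\lambda(h)|^{-1}\, \mathcal{F}_{V_i}(f)\bigl(h\cdot\xi,\; u_1,\; \lambda(h)^{-1} u_2\bigr),
\]
which is the expression to be matched on the right-hand side.

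For the right-hand side, I would first observe that the similitude norm on scalar matrices satisfies $\lambda(cI) = c^2$, so $\lambda(\lambda(h)^{-1} h) = \lambda(h)^{-1}$. Applying Lemma~\ref{lemma:h} to $\lambda(h)^{-1} h$ then gives
\[
\sigma_i(\lambda(h)^{-1} h)\, \mathcal{F}_{V_i}(f)(\xi, u_1, u_2) = \mathcal{F}_{V_i}(f)\bigl(\lambda(h)^{-1} h \cdot \xi,\; \lambda(h)^{-1} u_1,\; u_2\bigr).
\]
Next, the operator $r_i(\mathrm{diag}(\lambda(h), \lambda(h)^{-1}))$, read off the defining formulas of the extended Weil representation, rescales $\xi$ by $\lambda(h)$, $u_1$ by $\lambda(h)$, and $u_2$ by $\lambda(h)^{-1}$, contributing a prefactor $|\lambda(h)|^{\dim V_i/2}$. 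The first two scalings cancel the $\lambda(h)^{-1}$ factors already present, producing $h\cdot\xi$ and $u_1$, while $u_2$ acquires the expected $\lambda(h)^{-1}$. Combined with the overall $|\lambda(h)|^{-\dim V_i/2 - 1}$ prefactor this leaves exactly $|\lambda(h)|^{-1}\mathcal{F}_{V_i}(f)(h\cdot\xi, u_1, \lambda(h)^{-1} u_2)$, matching the left side.

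The second identity can be derived from the first by substituting $\lambda(h)^{-1} h$ in place of $h$ (using $\lambda(\lambda(h)^{-1}h)^{-1} \lambda(h)^{-1} h = h$ to restore $h$ on the left) and then inverting the factor $r_i(\mathrm{diag}(\lambda(h)^{-1}, \lambda(h)))$; this is a purely algebraic manipulation within the same framework. The special case $\lambda(h) = 1$ is then immediate: the prefactor is $1$, $r_i$ evaluates at the identity, and $\sigma_i(\lambda(h)^{-1} h) = \sigma_i(h)$. The main obstacle is bookkeeping — correctly combining the various powers of $|\lambda(h)|$ from the change of variables and from the Weil representation, distinguishing $\lambda$ applied to scalars versus to $h$, and tracking which operator scales which coordinate. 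An alternative, slightly slicker route is to factor $\sigma_i(h) = \sigma_i(\lambda(h) I_{V_{i+1}}) \circ \sigma_i(\lambda(h)^{-1} h)$ and apply Lemma~\ref{lemma:center} to rewrite the central factor directly as a multiple of $r_i(\mathrm{diag}(\lambda(h), \lambda(h)^{-1}))$, after which the commutativity of $\mathcal{F}_{V_i}$ with $r_i$ does the rest.
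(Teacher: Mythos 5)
Your primary argument—direct computation in coordinates using Lemma~\ref{lemma:h} and the definition of $\mathcal{F}_{V_i}$—is correct and arrives at the right answer, but it takes a genuinely different route from the paper's. The paper does not compute any integrals: it applies $\sigma_i$ to a matrix identity in $\mathrm{GO}_{Q_{i+1}}$ relating $\mathrm{diag}(h,\lambda(h),1)$, the swap matrix, $\mathrm{diag}(\lambda(h)^{-1}h,\lambda(h)^{-1},1)$, and $\lambda(h)I_{V_{i+1}}$, then uses the fact that $\sigma_i$ of the swap matrix equals $\mathcal{F}_{V_i}$ (quoted from \cite{getz2023summation}) together with Lemma~\ref{lemma:center}. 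Your coordinate approach is more elementary and self-contained, at the cost of heavier bookkeeping; the paper's is shorter and re-uses existing operator identities.

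There is one small but real gap. The formula for $r_i(\mathrm{diag}(a,a^{-1}))$ read off from the Weil representation carries the extra factor $\chi_{Q_i}(a)$, not just $|a|^{\dim V_i/2}$. For your prefactors to cancel correctly you need $\chi_{Q_i}(\lambda(h))=1$. This is true because $\chi_{Q_i}$ is trivial on similitude norms (the paper cites \cite[Lemma 3.2]{getz2019summation} precisely for this), but you must say so; as written your computation silently drops a potentially nontrivial character value.

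The ``slicker route'' you sketch at the end is not quite right and would fail if followed literally. The factorization $\sigma_i(h)=\sigma_i(\lambda(h)I_{V_{i+1}})\circ\sigma_i(\lambda(h)^{-1}h)$ does not hold: under the embedding $h\mapsto\mathrm{diag}(h,\lambda(h),1)$ and the anti-homomorphism property of $\sigma_i$, the operator on the right-hand side is $\sigma_i(\mathrm{diag}(h,1,\lambda(h)))$, which is a different element of $\mathrm{GO}_{Q_{i+1}}$ (this is exactly the operator treated in Lemma~\ref{lemma: sigma(h')}). The swap matrix must be conjugated through, which is why the paper's factorization has it appearing on both sides of the identity. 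Moreover, $\mathcal{F}_{V_i}$ does not commute with $\sigma_i(\lambda(h)^{-1}h)$, since $\lambda(\lambda(h)^{-1}h)=\lambda(h)^{-1}\neq 1$ in general, so the ``commutativity does the rest'' step does not apply. Stick with your direct computation, and add the $\chi_{Q_i}$ remark.
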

\begin{proof}
The results follows from the factorization
\begin{equation*}
\begin{split}
\left( \begin{matrix}
    h & \\
      &\lambda(h)  & \\
        & &1
\end{matrix}  \right) \left( \begin{matrix}
    I_{V_{3}} & \\
      &  &1 \\
        &1 &
\end{matrix}  \right) = \left( \begin{matrix}
    I_{V_{i}} & \\
      &  &1 \\
        &1 &
\end{matrix}  \right) \left( \begin{matrix}
    \lambda(h)^{-1} h & \\
      &\lambda(h)^{-1}  & \\
        & &1
\end{matrix}  \right) \lambda(h) I_{V_{i+1}} , 
\end{split}
\end{equation*}
and from \cite[Proposition 4.3 (4.19)]{getz2023summation}
\begin{equation*}
\begin{split}
\sigma_i \left( \begin{matrix}
    I_{V_{i}} & \\
      &  &1 \\
        &1 &
\end{matrix}  \right) = \mathcal{F}_{V_i}. 
\end{split}
\end{equation*}
Since $ \chi_{Q_i}$ is trivial on the similitude norm of  $h$ by \cite[Lemma 3.2]{getz2019summation}, now we apply Lemma \ref{lemma:center} to get the first equation. The second one can be proved similarly.
\end{proof}
For  $h \in \mathrm{GO}_{Q_{i}}(F)$, because 
\begin{equation*}
\begin{split}
\left( \begin{matrix}
    h & \\
      &1  & \\
        & &\lambda(h)
\end{matrix}  \right)  = \left( \begin{matrix}
    \lambda(h)^{-1} h & \\
      &\lambda(h)^{-1}  & \\
        & &1
\end{matrix}  \right) \lambda(h) I_{V_{i+1}} , 
\end{split}
\end{equation*}
we have proved
\begin{lemma}
\label{lemma: sigma(h')}
\begin{equation*}
\begin{split}
\sigma_i \left( \begin{matrix}
    h & \\
      &1  & \\
        & &\lambda(h)
\end{matrix}  \right)  = |\lambda(h)|^{-\frac{\mathrm{dim} V_i}{2} - 1}
r_i \left( \begin{matrix}
      \lambda(h) &  \\
        & \lambda(h)^{-1}
\end{matrix}  \right)  \sigma_i(\lambda(h)^{-1} h).
\end{split}
\end{equation*}
\end{lemma}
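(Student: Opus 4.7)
The plan is to apply $\sigma_i$ to the factorization
\[
\begin{pmatrix} h & & \\ & 1 & \\ & & \lambda(h) \end{pmatrix} = \begin{pmatrix} \lambda(h)^{-1}h & & \\ & \lambda(h)^{-1} & \\ & & 1 \end{pmatrix} \cdot \lambda(h) I_{V_{i+1}}
\]
already displayed just before the statement, use multiplicativity of the representation $\sigma_i$, and identify each of the two resulting factors with an operator whose action has been computed earlier in the section.

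First I would verify that $\lambda(\lambda(h)^{-1}h) = \lambda(h)^{-2}\lambda(h) = \lambda(h)^{-1}$, so that the first matrix on the right-hand side is exactly the image of $\lambda(h)^{-1}h \in \mathrm{GO}_{Q_i}$ under the natural embedding \eqref{eq:group embedding} into $\mathrm{GO}_{Q_{i+1}}$. With the notational abbreviation used in the statement, $\sigma_i$ applied to this embedded matrix is written as $\sigma_i(\lambda(h)^{-1}h)$.

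Second, I would apply Lemma \ref{lemma:center} with $t = \lambda(h)$ to the central scalar factor to obtain
\[
\sigma_i(\lambda(h) I_{V_{i+1}}) = \chi_{Q_i}(\lambda(h)) |\lambda(h)|^{-\frac{\mathrm{dim}\, V_i}{2}-1} r_i\begin{pmatrix} \lambda(h) & \\ & \lambda(h)^{-1} \end{pmatrix}.
\]
The character value $\chi_{Q_i}(\lambda(h))$ equals $1$ by \cite[Lemma 3.2]{getz2019summation}, which asserts that $\chi_{Q_i}$ is trivial on the image of the similitude norm.

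The last step is to bring the $r_i$-factor past $\sigma_i(\lambda(h)^{-1}h)$ so as to match the ordering in the statement. This is forced by Lemma \ref{lemma:sigma r = r sigma}: taking $g = \begin{pmatrix} \lambda(h) & \\ & \lambda(h)^{-1} \end{pmatrix}$, the twist $g \mapsto g^{\iota(\lambda(h)^{-1}h)}$ only conjugates $g$ by a diagonal matrix and therefore leaves the diagonal $g$ invariant, so $\sigma_i(\lambda(h)^{-1}h)$ and $r_i(g)$ commute. I do not expect any genuine obstacle; the argument is mechanical bookkeeping once the hinted factorization, Lemma \ref{lemma:center}, and Lemma \ref{lemma:sigma r = r sigma} are in place. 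The only point requiring mild care is tracking similitude norms through the embedding, in particular checking that $\lambda(\lambda(h)^{-1}h) = \lambda(h)^{-1}$ so that the middle block of the factorization is correct, and invoking the triviality of $\chi_{Q_i}$ on values of $\lambda$.
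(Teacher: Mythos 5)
Your proposal is correct and takes essentially the same route as the paper: apply $\sigma_i$ to the displayed factorization, identify the first factor as the embedded $\lambda(h)^{-1}h$ (after checking $\lambda(\lambda(h)^{-1}h) = \lambda(h)^{-1}$), apply Lemma \ref{lemma:center} to the central scalar, and use that $\chi_{Q_i}$ is trivial on similitude norms. The only minor inefficiency is the final commutation step: since $R$, and hence $\sigma_i$, is an anti-homomorphism (as one can read off from Lemma \ref{lemma:h}), the factor coming from $\lambda(h) I_{V_{i+1}}$ already lands on the left, so no appeal to Lemma \ref{lemma:sigma r = r sigma} is needed — though the argument you give for why that commutation holds is valid, so it does no harm.
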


\begin{proposition}
\label{prop:h}
Let  $f \in \mathcal{S}(V_i(F) \oplus F^2)$. For  $h \in \mathrm{GO}_{Q_{i}}(F)$ and $\xi \in X^0_i(F)$, we have
\begin{equation*}
\begin{split}
I(\sigma_i(h) f)(\xi) &= |\lambda(h)|^{-1} I(f)(h \cdot \xi), \\
I(\mathcal{F}_{V_i} ( \sigma_i(h) f))(\xi) &= |\lambda(h)|^{-\mathrm{dim} \ V_i / 2} I(\mathcal{F}_{V_i}(f))(\lambda(h)^{-1} h \cdot \xi).
\end{split}
\end{equation*}
\end{proposition}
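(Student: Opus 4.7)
The plan is to prove the two identities of Proposition \ref{prop:h} in sequence, with the second reducing to the first via Lemma \ref{lemma:F h}.

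For the first identity, the idea is to move $\sigma_i(h)$ past $r_i(g)$ inside the defining integral of $I$, peel it off using Lemma \ref{lemma:h}, and absorb the resulting twist through a change of variables on $N_2 \backslash \mathrm{SL}_2$. Concretely, Lemma \ref{lemma:sigma r = r sigma} gives $r_i(g) \sigma_i(h) = \sigma_i(h) r_i(g^{h^{-1}})$, and Lemma \ref{lemma:h} evaluates
\[
[\sigma_i(h) r_i(g^{h^{-1}}) f](\xi, 0, 1) = [r_i(g^{h^{-1}}) f](h\cdot\xi, 0, 1),
\]
using $\lambda(h)\cdot 0 = 0$ and $Q_i(h\xi)=\lambda(h)Q_i(\xi)=0$. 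Thus $I(\sigma_i(h)f)(\xi) = \int_{N_2\backslash \mathrm{SL}_2} [r_i(\alpha(g)) f](h\cdot\xi, 0, 1)\,dg$ where $\alpha(g) := g^{h^{-1}}$ is conjugation by $\mathrm{diag}(1,\lambda(h))$. This $\alpha$ normalizes $\mathrm{SL}_2$ and preserves its Haar measure (the adjoint of the diagonal conjugator has determinant one on $\mathfrak{sl}_2$), but its restriction to $N_2$ sends $n(u) \mapsto n(\lambda(h)^{-1}u)$, scaling Haar on $N_2$. A short Fubini comparison inside the identity $\int_{\mathrm{SL}_2} F(\alpha(g))\,dg = \int F(g)\,dg$ shows the induced substitution on the quotient carries a Jacobian of $|\lambda(h)|^{-1}$, which gives the first claim.

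For the second identity, I invoke Lemma \ref{lemma:F h} to rewrite
\[
\mathcal{F}_{V_i}(\sigma_i(h) f) = |\lambda(h)|^{-\dim V_i/2 - 1}\, r_i\!\left( \begin{matrix} \lambda(h) & \\ & \lambda(h)^{-1} \end{matrix}\right) \sigma_i(\lambda(h)^{-1}h)\, \mathcal{F}_{V_i}(f).
\]
Next, $I$ is unchanged by precomposition with $r_i(g_0)$ for any fixed $g_0\in\mathrm{SL}_2(\mathbb{A}_F)$, since right translation by $g_0$ on $\mathrm{SL}_2$ commutes with the left $N_2$-action and preserves Haar, hence descends to a measure-preserving map on $N_2 \backslash \mathrm{SL}_2$. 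So the inner Weil factor drops out. Finally, applying the first identity to $\sigma_i(\lambda(h)^{-1}h)$ and using $\lambda(\lambda(h)^{-1}h) = \lambda(h)^{-1}$ produces a factor $|\lambda(h)|$ and the argument shift by $\lambda(h)^{-1}h$. Collecting constants,
\[
|\lambda(h)|^{-\dim V_i/2 - 1} \cdot |\lambda(h)| = |\lambda(h)|^{-\dim V_i/2},
\]
matching the claim.

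The main technical point is the Jacobian in Step 1: one must carefully separate the fact that $\alpha$ preserves Haar on the ambient $\mathrm{SL}_2$ from the fact that $\alpha|_{N_2}$ does not preserve Haar on $N_2$, and see that these two facts combine to produce exactly the factor $|\lambda(h)|^{-1}$ on the quotient. Every other step is a direct application of the intertwining lemmas already in hand.
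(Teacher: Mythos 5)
Your proof is correct, but it takes a genuinely different route from the paper's. The paper proves Proposition \ref{prop:h} by a one-line citation to \cite[Proposition 4.3 (4.16) and Corollary 4.4]{getz2023summation} together with a remark on the right- versus left-action convention; you instead reconstruct the identity from the intertwining lemmas already established in the paper (Lemma \ref{lemma:sigma r = r sigma}, Lemma \ref{lemma:h}, Lemma \ref{lemma:F h}). Your handling of the crucial Jacobian is right: conjugation $\alpha(g)=g^{h^{-1}}$ by $\mathrm{diag}(1,\lambda(h))$ has trivial modulus on $\mathrm{SL}_2(\mathbb{A}_F)$ (the determinant of $\mathrm{Ad}$ on $\mathfrak{sl}_2$ is $1\cdot\lambda^{-1}\cdot\lambda=1$), while on $N_2$ it sends $n(u)\mapsto n(\lambda(h)^{-1}u)$ and hence scales $dn$ by $|\lambda(h)|$; since $dg=dn\,d\dot g$, the quotient measure must compensate by $|\lambda(h)|^{-1}$, which is exactly the claimed constant. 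One small point worth making explicit: to descend the integrand $g\mapsto r_i(g)f(h\cdot\xi,0,1)$ to $N_2\backslash\mathrm{SL}_2$ you need its left-$N_2$-invariance, which uses both $Q_i(h\cdot\xi)=\lambda(h)Q_i(\xi)=0$ and the fact that $n^t$ fixes $(0,1)$ for $n\in N_2$. The second identity is then a clean exponent count: Lemma \ref{lemma:F h} contributes $|\lambda(h)|^{-\dim V_i/2-1}$, the inner Weil factor $r_i(\mathrm{diag}(\lambda(h),\lambda(h)^{-1}))$ is absorbed by right-translation invariance of $I$, and the first identity applied to $\lambda(h)^{-1}h$ (whose similitude norm is $\lambda(h)^{-1}$) contributes $|\lambda(h)|$, giving the stated exponent $-\dim V_i/2$. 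Your route buys a self-contained derivation in the paper's own conventions, avoiding the convention-translation the paper has to flag; the paper's route buys brevity by outsourcing to the reference.
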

\begin{proof}
See \cite[Proposition 4.3 (4.16) and Corollary 4.4]{getz2023summation}. Note that the action $\mathrm{GO}_{Q_{i}}(F)$ is defined as $R(h)f(\xi) = f(h \cdot \xi)$ instead of the left action $L(h)f(\xi) = f(h^{-1} \cdot \xi)$ there.
\end{proof}

\begin{proposition}
\label{prop:u}
Let $f \in \mathcal{S}(V_i(F) \oplus F^2)$. For  $u(x)$ given by \eqref{eq:u form} and $\xi \in X^0_i(F)$, we have 
\begin{equation*}
\begin{split}
I(\sigma_i(u(x))f) (\xi) = \psi(x^t \xi) I(f)(\xi).
\end{split}
\end{equation*}
\end{proposition}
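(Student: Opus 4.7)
The strategy is to exploit the fact that $u(x)$ has trivial similitude norm so that $\sigma_i(u(x))$ commutes with the Weil representation $r_i$; this allows me to pass the operator $\sigma_i(u(x))$ inside the integral defining $I$ and then read off the phase from Lemma \ref{lemma:u} evaluated at the very special point $(\xi, 0, 1)$, where the vanishing of the second argument kills every contribution except $\psi(x^t \xi)$.

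First I would observe that $u(x)$, being a unipotent element of the maximal parabolic of $\mathrm{GO}_{Q_{i+1}}(F)$ (the subgroup introduced right after \eqref{eq:u form}), actually lies in the orthogonal group, hence $\lambda(u(x)) = 1$. With $\lambda(h) = 1$ the conjugation in Lemma \ref{lemma:sigma r = r sigma} becomes trivial, so $g^{u(x)} = g$ and we obtain
\[
\sigma_i(u(x)) \circ r_i(g) = r_i(g) \circ \sigma_i(u(x)), \qquad g \in \mathrm{SL}_2(F).
\]
Plugging this into the definition of $I$ gives
\[
I(\sigma_i(u(x)) f)(\xi) = \int_{N_2(F) \backslash \mathrm{SL}_2(F)} \sigma_i(u(x))\bigl(r_i(g) f\bigr)(\xi, 0, 1) \, dg.
\]

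Second, I would apply Lemma \ref{lemma:u} with the Schwartz function $r_i(g) f$ in place of $f$ and with the point $(\xi_1, \xi_2) = (0, 1)$. Both the translation term $(J_i x)^t \xi_1$ and the $Q_i(x) \xi_1$ term in the phase are annihilated by $\xi_1 = 0$, and the surviving phase is $\psi(\xi_2 \, x^t \xi) = \psi(x^t \xi)$. Consequently
\[
\sigma_i(u(x))\bigl(r_i(g) f\bigr)(\xi, 0, 1) = \psi(x^t \xi) \cdot (r_i(g) f)(\xi, 0, 1),
\]
and since the scalar $\psi(x^t \xi)$ is independent of $g$ it factors out of the integral to yield $\psi(x^t \xi) I(f)(\xi)$, as desired.

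The only mildly subtle step is confirming that $\lambda(u(x)) = 1$, which is essentially built into the assertion in the text that $\{u(x) : x \in V_i(F)\}$ is the unipotent radical of a parabolic of the orthogonal (not merely similitude) group; a direct verification against $J_{i+1} = \mathrm{diag}(J_i, J_1)$ is also straightforward. Once that is in place, the rest is pure bookkeeping with the lemmas already established in the local computations subsection, so I do not expect any real obstacle.
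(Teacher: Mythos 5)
Your proof is correct and takes essentially the same route as the paper, which proves the proposition in one line by ``applying the definition of the integral transform $I$ to Lemma \ref{lemma:u}.'' The only thing you add is to make explicit the step the paper leaves implicit: commuting $\sigma_i(u(x))$ past $r_i(g)$ via Lemma \ref{lemma:sigma r = r sigma} (legitimate because $\lambda(u(x))=1$) so that Lemma \ref{lemma:u} can be applied at the point $(\xi,0,1)$, where $\xi_1=0$ kills the translation and the $Q_i(x)\xi_1$ phase term.
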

\begin{proof}
This follows from applying definition of integral transform $I$ in \eqref{eq:I} to Lemma \ref{lemma:u}.
\end{proof}

\begin{proposition} \label{prop:a}
Let  $f \in \mathcal{S}(V_i(F) \oplus F^2)$.   For $\xi \in X^0_i(F)$,
\begin{equation*}
\begin{split}
I\left(\sigma_i \left( \begin{matrix}
    I_{V_{i}} & \\
      &t  & \\
        & &t^{-1}
\end{matrix}  \right) f \right) (\xi) &= \chi_{Q_i}(t) |t|^{\frac{\mathrm{dim} \ V_i }{2} - 1} I(f)(t \xi), \\
I\left(\mathcal{F}_{V_i} \left(  \sigma_i \left( \begin{matrix}
    I_{V_{i}} & \\
      &t  & \\
        & &t^{-1}
\end{matrix}  \right) f \right) \right) (\xi) &= \chi_{Q_i}(t) |t|^{1 -  \frac{\mathrm{dim} \ V_i }{2}}  I(\mathcal{F}_{V_i} (f) )(t^{-1} \xi).
\end{split}
\end{equation*}
\end{proposition}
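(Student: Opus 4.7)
The plan is to reduce both claims to Lemma~\ref{lemma:center} and Proposition~\ref{prop:h} by a single algebraic factorization of the element $h := \mathrm{diag}(I_{V_i}, t, t^{-1}) \in \mathrm{GO}_{Q_{i+1}}(F)$. The key observation is that $h$ can be written as the product of the scalar $t^{-1}I_{V_{i+1}}$ and $\iota_G(tI_{V_i})$, where $\iota_G$ is the embedding $\mathrm{GO}_{Q_i} \to \mathrm{GO}_{Q_{i+1}}$ of \eqref{eq:group embedding} and $\iota_G(tI_{V_i}) = \mathrm{diag}(tI_{V_i}, t^2, 1)$ since $\lambda(tI_{V_i}) = t^2$. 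Since $\sigma_i$ is a representation,
\[
\sigma_i(h) f = \sigma_i(t^{-1}I_{V_{i+1}}) \, \sigma_i(\iota_G(tI_{V_i})) f,
\]
and Lemma~\ref{lemma:center} (applied with $t \mapsto t^{-1}$, using that $\chi_{Q_i}$ is quadratic so $\chi_{Q_i}(t^{-1}) = \chi_{Q_i}(t)$) rewrites the first factor as $\chi_{Q_i}(t) |t|^{\dim V_i/2 + 1} r_i(a_{t^{-1}})$. Thus
\[
\sigma_i(h) f = \chi_{Q_i}(t)|t|^{\dim V_i/2+1} \, r_i(a_{t^{-1}}) \, \sigma_i(\iota_G(tI_{V_i})) f.
\]

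A short check from the definition \eqref{eq:I} shows $I(r_i(g_0) \Phi) = I(\Phi)$ for any $g_0 \in \mathrm{SL}_2(F)$: one writes $r_i(g) r_i(g_0) = r_i(g g_0)$ and substitutes $g \to g g_0^{-1}$ in the integral, which is legitimate by right-invariance of the quotient measure on $N_2(\mathbb{A}_F) \backslash \mathrm{SL}_2(\mathbb{A}_F)$. The same absorption works after applying $\mathcal{F}_{V_i}$, because $\mathcal{F}_{V_i}$ commutes with $r_i$. Feeding the factorization into this observation, the $r_i(a_{t^{-1}})$ factor disappears, and Proposition~\ref{prop:h} applied to $\iota_G(tI_{V_i})$ (which has similitude norm $t^2$) finishes the computation: its first formula gives $I(\sigma_i(\iota_G(tI_{V_i}))f)(\xi) = |t|^{-2} I(f)(t\xi)$, and combining with $\chi_{Q_i}(t)|t|^{\dim V_i/2+1}$ yields the first equation; its second formula gives $I(\mathcal{F}_{V_i}(\sigma_i(\iota_G(tI_{V_i}))f))(\xi) = |t|^{-\dim V_i} I(\mathcal{F}_{V_i}(f))(t^{-1}\xi)$, and combining yields the second equation.

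The main obstacle is precisely spotting this factorization: although $\lambda(h) = 1$, the element $h$ itself is not in the image of $\iota_G$ (whose image always has last diagonal entry $1$), so none of Lemmas~\ref{lemma:h}, \ref{lemma:F h} or \ref{lemma: sigma(h')} apply directly. The trick is to multiply by a central element, sacrificing $\lambda = 1$ but moving the remaining factor into $\iota_G(\mathrm{GO}_{Q_i})$ where Proposition~\ref{prop:h} is available; the resulting Weil-representation correction $r_i(a_{t^{-1}})$ is then invisible to $I$, so once the factorization is in hand everything else is routine bookkeeping.
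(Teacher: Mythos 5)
Your argument is correct, and for the first formula it is exactly the paper's: the factorization
\[
\mathrm{diag}(I_{V_i}, t, t^{-1}) = t^{-1}I_{V_{i+1}} \cdot \mathrm{diag}(tI_{V_i}, t^2, 1),
\]
combined with Lemma~\ref{lemma:center} applied at $t^{-1}$, is precisely the paper's equation \eqref{eq:center formula}, and the rest (absorbing $r_i(a_{t^{-1}})$ into the $I$-integral by right-invariance, then invoking Proposition~\ref{prop:h}) matches the paper step for step; you are just more explicit about why $I \circ r_i(g_0) = I$, which the paper leaves implicit.

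For the second formula you take a modestly streamlined route. The paper instead conjugates $\sigma_i(h)$ by the swap element, using $\sigma_i\left(\begin{smallmatrix} I_{V_i} & & \\ & & 1 \\ & 1 & \end{smallmatrix}\right) = \mathcal{F}_{V_i}$ and the matrix identity
\[
\begin{pmatrix} I_{V_i} & & \\ & & 1 \\ & 1 & \end{pmatrix}\begin{pmatrix} I_{V_i} & & \\ & t & \\ & & t^{-1} \end{pmatrix} = \begin{pmatrix} t^{-1}I_{V_i} & & \\ & t^{-2} & \\ & & 1 \end{pmatrix} \cdot tI_{V_{i+1}} \cdot \begin{pmatrix} I_{V_i} & & \\ & & 1 \\ & 1 & \end{pmatrix},
\]
then applies the first formula of Proposition~\ref{prop:h} twice (re-proving the first formula of Proposition~\ref{prop:t} as an intermediate step). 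You instead push $\mathcal{F}_{V_i}$ through the same factorization using only the already-noted commutation $\mathcal{F}_{V_i} \circ r_i = r_i \circ \mathcal{F}_{V_i}$, apply $I$, and invoke the second formula of Proposition~\ref{prop:h} once. Both routes are valid; yours has the small advantage of treating the two formulas uniformly from a single factorization without needing the swap-matrix identity from \cite{getz2023summation}.
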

\begin{proof}
The first equation follows from \eqref{eq:center formula} and Proposition \ref{prop:h}.
From factorization
\begin{equation*}
\begin{split}
\left( \begin{matrix}
    I_{V_{i}} & \\
      &  &1 \\
        &1 &
\end{matrix}  \right) \left( \begin{matrix}
    I_{V_{i}} & \\
      &t  & \\
        & &t^{-1}
\end{matrix}  \right) = \left( \begin{matrix}
    t^{-1} I_{V_{i}} & \\
      &t^{-2}  & \\
        & &1
\end{matrix}  \right) t I_{V_{i+1}} \left( \begin{matrix}
    I_{V_{i}} & \\
      &  &1 \\
        &1 &
\end{matrix}  \right)
\end{split}
\end{equation*}
and from Proposition \ref{prop:h}, we have
\begin{equation*}
\begin{split}
I\left(\mathcal{F}_{V_i} \left(  \sigma_i \left( \begin{matrix}
    I_{V_{i}} & \\
      &t  & \\
        & &t^{-1}
\end{matrix}  \right) f \right) \right) (\xi) &= |t|^2  I(\sigma_i( t I_{V_{i+1}})  \mathcal{F}_{V_i} (f) )(t^{-1} \xi).
\end{split}
\end{equation*}
Since
\begin{equation*}
\begin{split}
t I_{V_{i+1}} = \left( \begin{matrix}
     I_{V_{i}} & \\
      &t^{-1}  & \\
        & &t
\end{matrix}  \right)  \left( \begin{matrix}
    t I_{V_{i}} & \\
      &t^2  & \\
        & &1
\end{matrix}  \right),
\end{split}
\end{equation*}
By first equation and Proposition \ref{prop:h} again, we get
\begin{equation*}
\begin{split}
I(\sigma_i( t I_{V_{i+1}})f )( \xi) &= \chi_{Q_i}(t) |t|^{1 - \frac{\mathrm{dim} \ V_i }{2}}  I\left( \sigma_i \left( \begin{matrix}
    tI_{V_{i}} & \\
      &t^2  & \\
        & &1
\end{matrix}  \right)f \right)(t^{-1} \xi) \\
& = \chi_{Q_i}(t) |t|^{-1 -  \frac{\mathrm{dim} \ V_i }{2}}  I(f )(\xi).
\end{split}
\end{equation*}
Therefore
\begin{equation*}
\begin{split}
|t|^2  I(\sigma_i( t I_{V_{i+1}})  \mathcal{F}_{V_i} (f) )(t^{-1} \xi)
& = \chi_{Q_i}(t) |t|^{1 -  \frac{\mathrm{dim} \ V_i }{2}}  I(\mathcal{F}_{V_i} (f) )(t^{-1} \xi).
\end{split}
\end{equation*}
\end{proof}

\begin{proposition} \label{prop:t}
Let  $f \in \mathcal{S}(V_i(F) \oplus F^2)$. For   $\xi \in X^0_i(F)$,
\begin{equation*}
\begin{split}
I\left(\sigma_i \left( t I_{V_{i+1}} \right) f \right) (\xi) &= \chi_{Q_i}(t) |t|^{-1 - \frac{\mathrm{dim} \ V_i }{2}} I(f)(\xi), \\
I\left(\mathcal{F}_{V_i} \left(  \sigma_i \left(t I_{V_{i+1}}  \right) f \right) \right) (\xi) &= \chi_{Q_i}(t) |t|^{-1 - \frac{\mathrm{dim} \ V_i }{2}}  I(\mathcal{F}_{V_i} (f) )(\xi).
\end{split}
\end{equation*}
\end{proposition}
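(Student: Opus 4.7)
The plan is to combine Lemma~\ref{lemma:center}---which collapses $\sigma_i(t I_{V_{i+1}})$ into a scalar multiple of a Weil operator of the diagonal torus of $\mathrm{SL}_2$---with the observation that the integral transform $I$ is invariant under the right action of $\mathrm{SL}_2$ via $r_i$, and in particular under $r_i(A)$ for $A$ the diagonal torus of $\mathrm{SL}_2$.

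First I would verify the auxiliary identity
\[
I(r_i(a) f)(\xi) = I(f)(\xi), \qquad a \in A(F).
\]
Unfolding the definition \eqref{eq:I} and using that $r_i$ is a representation gives
\[
I(r_i(a) f)(\xi) = \int_{N_2(F)\backslash \mathrm{SL}_2(F)} r_i(ga) f(\xi, 0, 1) \, dg,
\]
and right $\mathrm{SL}_2$-invariance of the quotient measure (a consequence of the unimodularity of $N_2$ and $\mathrm{SL}_2$) immediately yields $I(f)(\xi)$.

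For the first equation in the proposition, apply $I$ to Lemma~\ref{lemma:center} and invoke this invariance:
\[
I\bigl(\sigma_i(t I_{V_{i+1}}) f\bigr)(\xi) = \chi_{Q_i}(t) |t|^{-\dim V_i/2 - 1} I\bigl( r_i(\mathrm{diag}(t, t^{-1})) f\bigr)(\xi) = \chi_{Q_i}(t) |t|^{-\dim V_i/2 - 1} I(f)(\xi).
\]
For the second equation, use the commutation $\mathcal{F}_{V_i} \circ r_i(g) = r_i(g) \circ \mathcal{F}_{V_i}$ recorded before Lemma~\ref{lemma:F h} to push $\mathcal{F}_{V_i}$ past the Weil operator $r_i(\mathrm{diag}(t, t^{-1}))$, then apply the auxiliary identity again with $\mathcal{F}_{V_i}(f)$ in place of $f$. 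Since Lemma~\ref{lemma:center} and the commutativity of $\mathcal{F}_{V_i}$ with $r_i$ are both already established, no substantive new work is required; the only (minor) obstacle is the measure-theoretic bookkeeping behind the right $A$-invariance of $I$, which is a standard consequence of unimodularity.
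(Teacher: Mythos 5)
Your proof is correct, and the route is genuinely a bit different from the paper's. The paper derives the first identity by referring back to the last display of the proof of Proposition~\ref{prop:a}, which in turn uses the factorization $t I_{V_{i+1}} = \mathrm{diag}(I_{V_i}, t^{-1}, t)\,\mathrm{diag}(t I_{V_i}, t^2, 1)$ together with Proposition~\ref{prop:h} (applied to the image of $t I_{V_i} \in \mathrm{GO}_{Q_i}$ under the embedding) and formula \eqref{eq:center formula}. For the second identity the paper commutes $\sigma_i(t I_{V_{i+1}})$ past the Weyl/Fourier element $\mathrm{diag}(I_{V_i}, \begin{smallmatrix} & 1 \\ 1 & \end{smallmatrix})$ at the level of matrices in $\mathrm{GO}_{Q_{i+1}}$, then reduces to the first identity. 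Your argument instead applies Lemma~\ref{lemma:center} once and for all, reducing $\sigma_i(t I_{V_{i+1}})$ to a scalar times $r_i(\mathrm{diag}(t,t^{-1}))$, and then both identities fall out of a single auxiliary fact: the integral transform $I$ of \eqref{eq:I} is right $r_i(\mathrm{SL}_2)$-invariant, which holds because $N_2$ and $\mathrm{SL}_2$ are unimodular so the quotient $N_2 \backslash \mathrm{SL}_2$ carries a right $\mathrm{SL}_2$-invariant measure. That invariance is never stated explicitly in the paper (it is essentially folded into the cited results from \cite{getz2023summation}), so you are adding a small but genuinely useful lemma that unifies the two cases; the commutation $\mathcal{F}_{V_i} \circ r_i = r_i \circ \mathcal{F}_{V_i}$ you invoke is indeed recorded in the paper right after the definition of $\mathcal{F}_{V_i}$. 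The trade-off is that the paper's version is a chain of back-references requiring no new observations, while yours introduces one extra (standard) measure-theoretic fact in exchange for a more transparent and symmetric treatment of the two equations.
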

\begin{proof}
The first equation has been proved in the second part of the Proposition \ref{prop:a}. The second equation follows from
\begin{equation*}
\begin{split}
\left( \begin{matrix}
    I_{V_{i}} & \\
      &  &1 \\
        &1 &
\end{matrix}  \right) \left( \begin{matrix}
    t I_{V_{i}} & \\
      &t  & \\
        & &t
\end{matrix}  \right) = \left( \begin{matrix}
    t I_{V_{i}} & \\
      &t  & \\
        & &t
\end{matrix}  \right) \left( \begin{matrix}
    I_{V_{i}} & \\
      &  &1 \\
        &1 &
\end{matrix}  \right),
\end{split}
\end{equation*}
which implies
\begin{equation*}
\begin{split}
I\left(\mathcal{F}_{V_i} \left(  \sigma_i \left(t I_{V_{i+1}}  \right) f \right) \right) (\xi)  &= I\left( \sigma_i \left(t I_{V_{i+1}} \right)
 \mathcal{F}_{V_i}(f) \right) (\xi)   \\
&= \chi_{Q_i}(t) |t|^{-1 - \frac{\mathrm{dim} \ V_i }{2}}  I(\mathcal{F}_{V_i} (f) )(\xi).
\end{split}
\end{equation*}
\end{proof}

\section{Zeta functions and multiple Dirichlet series}
\subsection{Zeta functions associated with quadrics}
Let $B^{+}$ be the subgroup of $B(\mathbb{R})$ consisting of positive diagonal entries. Let $V = V(\mathbb{R})$ be the real space of pairs of binary quadratic forms, and $X = X(\mathbb{R}) = \{(x, y) \in V: \mathrm{Disc}(x) = \mathrm{Disc}(y) \}$.
Let $L$ be the integral points of $X^{0} = X - \{ 0 \}$.
There are three relative invariants $P_1, P_2$ and $P$ on $X$ given by
\begin{equation*}
\begin{split}
P_1(v) := x_1, \quad P_2(v) := y_1, \quad P(v) := \mathrm{Disc}(x) = 4(x_{2}^2 - x_1 x_3) 
\end{split}
\end{equation*}
for $v = \left( \left( \begin{matrix}
    x_1    & x_{2}\\
    x_{2} & x_3
\end{matrix} \right), \left( \begin{matrix}
    y_1    & y_{2}\\
    y_{2} & y_3
\end{matrix} \right) \right)$. 
Let $\Lambda^{+} = \Lambda \cap B^{+}$ and $A^{+}= A \cap B^{+}$.  For $h = \left( \begin{matrix}
    d_{\mu} & \\
        &d_{\mu}
\end{matrix}  \right) \left( \begin{matrix}
    a_t & \\
        &a_{s}
\end{matrix}  \right) \left( \begin{matrix}
    n_u & \\
        &n_v
\end{matrix}  \right)
\in B^{+} = \Lambda^{+} A^{+} N$ uniquely expressed in the Iwasawa decomposition, set
\begin{equation*}
\begin{split}
\chi_1(h) := \mu^2 t^2, \quad \chi_2(h) := \mu^2 s^2, \quad \chi(h) := \mu^4.
\end{split}
\end{equation*}
We write $h \cdot v$ for the action $\iota(h) \cdot v$  and write $t \cdot h$ for the product $t I_V \cdot \iota(h)$ where $\iota: B^{+} \to \mathrm{GO}_{Q}(\mathbb{R})$ denotes the embedding to the orthogonal similitude group. Then 
\begin{equation*}
\begin{split}
P_1(h \cdot v) = \chi_1(h) P_1(v), P_2(h \cdot v) = \chi_2(h) P_2(v), P(h \cdot v) = \chi(h) P(v).
\end{split}
\end{equation*}
Note that the similitude norm  $\lambda(h)$ is equal to $\chi(h)$.
The Haar measure on $B^{+}$ is defined as
\begin{equation*}
\begin{split}
d \left( 
\left( \begin{matrix}
    d_{\mu} & \\
        &d_{\mu}
\end{matrix}  \right) \left( \begin{matrix}
    a_t & \\
        &a_{s}
\end{matrix}  \right) \left( \begin{matrix}
    n_u & \\
        &n_v
\end{matrix}  \right) \right) = du dv t^{-2} s^{-2} d^{\times}t d^{\times}s  d^{\times}\mu.
\end{split}
\end{equation*}
The set of singular points is the union 
\begin{equation*}
\begin{split}
\Sigma_{\mathrm{sig}} = S_1 \cup S_2 \cup S
\end{split}
\end{equation*}
where $S_i = \{v \in X^0: P_i(v) = 0\}$ $(i = 1,2)$ and $S = \{v \in X^0: P(v) = 0\}$.  
There are two open subsets 
\begin{equation*}
\begin{split}
X^{0}_{\pm} = \{(x, y) \in X^{0}: \pm \mathrm{Disc}(x) > 0\}.
\end{split}
\end{equation*}
Each open subset has four $B^{+}$-orbits. 
The base points of four open orbits in $X^0_{+}$ are
$\left( \left( \begin{matrix}
    \pm 1    & 0\\
    0 & \mp 1
\end{matrix} \right), \left( \begin{matrix}
    \pm 1    & 0\\
    0 &  \mp 1
\end{matrix} \right) \right)$ and base points of four open orbits in $X_{-}^{0}$ are $\left( \left( \begin{matrix}
    \pm 1    & 0\\
    0 & \pm 1
\end{matrix} \right), \left( \begin{matrix}
    \pm 1    & 0\\
    0 &  \pm 1
\end{matrix} \right) \right)$.  They all have trivial stabilizers in $B^{+}$.

We next give a proof of Proposition \ref{prop:schwartz} in the Archimedean case. 
\begin{proposition}
\label{prop:Dixmier-Malliavin}
For $f' \in \mathcal{S}(V)$, there exits $\tilde{f} \in \mathcal{S}(V \oplus \mathbb{R}^2)$ such that
\begin{equation*}
\begin{split}
f :=  I(\tilde{f}) = f'|_{X^0}.
\end{split}
\end{equation*}
\end{proposition}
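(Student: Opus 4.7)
The plan is to combine the Dixmier--Malliavin theorem with the fact that the unipotent subgroup $N_2 \subset \mathrm{SL}_2$ acts trivially via the Weil representation $\rho_3$ on Schwartz functions evaluated on the quadric. Since $\mathcal{S}(V)$ is a smooth Fr\'echet representation of $\mathrm{SL}_2(\mathbb{R})$ under $\rho_3$, Dixmier--Malliavin produces a finite decomposition
\[
f' = \sum_{j=1}^n \rho_3(\phi_j) h_j, \qquad \phi_j \in C_c^\infty(\mathrm{SL}_2(\mathbb{R})), \quad h_j \in \mathcal{S}(V).
\]

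The key observation is that for $\xi \in X^0$ one has $Q(\xi)=0$, so the Weil formula $\rho_3\!\left(\begin{smallmatrix} 1 & t \\ 0 & 1 \end{smallmatrix}\right)\!F(\xi) = \psi(tQ(\xi))F(\xi) = F(\xi)$ shows that $\rho_3(n)$ acts trivially on the evaluation at $\xi$ for every $n \in N_2(\mathbb{R})$. Unfolding $\int_{\mathrm{SL}_2}$ along the unimodular fibration $\mathrm{SL}_2 \to N_2 \backslash \mathrm{SL}_2$ therefore gives, for each $\xi \in X^0$,
\[
f'(\xi) = \sum_{j} \int_{N_2 \backslash \mathrm{SL}_2(\mathbb{R})} \phi_j^{(N_2)}(g)\,\rho_3(g) h_j(\xi)\,dg, \qquad \phi_j^{(N_2)}(g) := \int_{N_2(\mathbb{R})} \phi_j(n g)\,dn.
\]

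To recognize this as $I(\tilde{f})$, identify $N_2 \backslash \mathrm{SL}_2 \cong \mathbb{R}^2 \setminus \{0\}$ via $g = \left(\begin{smallmatrix} a & b \\ c & d \end{smallmatrix}\right) \mapsto (c,d)$. Then $\phi_j^{(N_2)}$ corresponds to a $C^\infty$ function whose support is the compact image of $\mathrm{supp}(\phi_j)$, which is a compact subset of $\mathbb{R}^2 \setminus \{0\}$; extending by zero yields $F_j \in C_c^\infty(\mathbb{R}^2) \subset \mathcal{S}(\mathbb{R}^2)$. Since $g^t\!\left(\begin{smallmatrix} 0 \\ 1 \end{smallmatrix}\right) = \left(\begin{smallmatrix} c \\ d \end{smallmatrix}\right)$, the tensor $\tilde{f}_j := h_j \otimes F_j \in \mathcal{S}(V \oplus \mathbb{R}^2)$ satisfies $r_3(g)\tilde{f}_j(\xi,0,1) = \rho_3(g) h_j(\xi)\,\phi_j^{(N_2)}(g)$, and hence $I(\tilde{f}_j)(\xi) = \rho_3(\phi_j) h_j(\xi)$ for $\xi \in X^0$. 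Summing over $j$ yields $\tilde{f} := \sum_j \tilde{f}_j$ with $I(\tilde{f}) = f'|_{X^0}$.

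The main technical input is the smoothness of $\rho_3$ on the Fr\'echet space $\mathcal{S}(V)$, which is the classical statement about the Weil representation; once this is granted, Dixmier--Malliavin applies directly, and the rest is a bookkeeping computation with the definitions of $r_3$, $I$, and the identification of the $\mathrm{SL}_2$-invariant measure on $N_2 \backslash \mathrm{SL}_2$ with Lebesgue measure on $\mathbb{R}^2 \setminus \{0\}$ (any constant discrepancy is absorbed into the $\phi_j$).
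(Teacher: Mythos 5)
Your proposal is correct and follows essentially the same route as the paper: apply Dixmier--Malliavin to $\rho_3$ on $\mathcal{S}(V)$, unfold the $\mathrm{SL}_2$-integral along $N_2$ using that $\psi(tQ(\xi))=1$ on $X^0$, and package the $N_2$-averaged test function as the $\mathbb{R}^2$-factor of $\tilde f$. Your version is in fact slightly more careful than the paper's in spelling out the $N_2$-triviality of $\rho_3$ on $X^0$ and in verifying that the averaged functions land in $C_c^\infty(\mathbb{R}^2\setminus\{0\})\subset\mathcal{S}(\mathbb{R}^2)$ so that $\tilde f$ really is Schwartz.
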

\begin{proof}
For $f'\in \mathcal{S}(V)$, by a theorem of Dixmier-Malliavin, we can choose $\phi_i \otimes f_i \in \mathrm{C}_c^{\infty}(\mathrm{SL}_2(\mathbb{R})) \otimes \mathcal{S}(V)$ such that for $v \in V$, 
\begin{equation*}
\begin{split}
 f'(v) = \int_{ \mathrm{SL_2}(\mathbb{R})} \sum_{i=1}^N 
\phi_i( g)  \rho(g)f_i(v) dg. \\
\end{split}
\end{equation*}
Its evaluation at $\xi \in X^0$ becomes
\begin{equation*}
\begin{split}
f'(\xi) =  \int_{N_2(\mathbb{R}) \backslash \mathrm{SL_2}(\mathbb{R})} \sum_{i=1}^N \left(\int_{N_2(\mathbb{R})} 
\phi_i(n g) dn \right) \rho(g)f_i(\xi) d\dot{g}. \\
\end{split}
\end{equation*}
If $\tilde{f}  \in \mathcal{S}(V \oplus \mathbb{R}^2)$ is defined by
\begin{equation*}
\begin{split}
\tilde{f}(v, (0, 1)g) =  \sum_{i=1}^N \left(\int_{N_2(\mathbb{R})} 
\phi_i(n g) dn \right) f_i(v) \\
\end{split}
\end{equation*}
then it is easy to see that
\begin{equation*}
\begin{split}
f'(\xi)=  I(\tilde{f})(\xi).
\end{split}
\end{equation*}
\end{proof}
Now if $\tilde{f} \in \mathcal{S}(V \oplus \mathbb{R}^2)$ such that $f = I(\tilde{f}) \in \mathcal{S}(X)$, in view of the definition \eqref{eq:Fx}, denote the Fourier transform of $f$ by
\begin{equation*}
\begin{split}
\hat{f} := I(\mathcal{F}_{V} (\tilde{f})).
\end{split}
\end{equation*}
For the rest of the paper we want to choose a special set of $\mathcal{S}(X)$ as test functions. For this we consider a larger group action $B_2^{+} \times B_2^{+}$ on $V$. The representation $(B_2^{+} \times B_2^{+}, V)$ is a prehomogeneous vector space. Let $G^{+}$ denote $B_2^{+}\times B_2^{+}$ and $G^1$  denote $G^{+} \cap ( \mathrm{SL}_2(\mathbb{R}) \times \mathrm{SL}_2(\mathbb{R}) )$. For a base point $x_{\ast}$, take any function from $\mathcal{S}(G^1)$ and denote it  by $f_{G_1, \ast}$ to indicate one could choose different function for different base point, and extend it to $G^{+}$ independent of the determinant. Let $f_1, f_2 \in C^{\infty}_c(\mathbb{R^{+}})$. Define a Schwartz function $f'$ of $V$ supported on the open orbit $G^{+} x_{\ast}$ as follows
\begin{equation} \label{eq:Schwartz}
\begin{split}
f'(hx_{\ast}) := f_{G^1,\ast}(h) f_1(\mathrm{det}(h_1)^2) f_2(\mathrm{det}(h_2)^2).
\end{split}
\end{equation}
Adding those functions associated with each base point defines a Schwartz function $f' \in \mathcal{S}(V)$. Take $f'' \in \mathcal{S}(\mathbb{R}^2)$. Define $\tilde{f} = f' \otimes f'' \in \mathcal{S}(V \oplus \mathbb{R}^2)$.
As a direct consequence of the above choice of test functions we have the following
\begin{lemma}
For $\tilde{f} = f' \otimes f'' \in \mathcal{S}(V \oplus \mathbb{R}^2)$  chosen above, both  $f$ and $\hat{f}$ vanish on the singular subset $\Sigma_{\mathrm{sig}}$.
\end{lemma}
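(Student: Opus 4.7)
The plan is to show directly that $I(f'\otimes f'')(\xi)=0$ for $\xi\in\Sigma_{\mathrm{sig}}$, and then deduce the corresponding statement for $\hat{f}$ as a corollary. The reduction works because the partial symplectic Fourier transform $\mathcal{F}_V$ acts only on the $\mathbb{R}^2$-factor of $V\oplus\mathbb{R}^2$, so
\[
\mathcal{F}_V(f'\otimes f'')=f'\otimes\mathcal{F}_{\mathrm{sp}}(f''),
\]
which is again a tensor product with the same $f'$ and a new Schwartz function on $\mathbb{R}^2$. Hence the vanishing of $\hat{f}$ on $\Sigma_{\mathrm{sig}}$ follows from the vanishing of $f$ on $\Sigma_{\mathrm{sig}}$ applied to the pair $(f',\mathcal{F}_{\mathrm{sp}}(f''))$.

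From the definition,
\[
I(f'\otimes f'')(\xi)=\int_{N_2\backslash \mathrm{SL}_2(\mathbb{R})}\rho(g)f'(\xi)\,f''(g^t(0,1))\,dg,
\]
and for $\xi\in X^0$ the integrand is left-$N_2$-invariant since $\rho(n)f'(\xi)=\psi(tQ(\xi))f'(\xi)=f'(\xi)$. I would parametrize $N_2\backslash \mathrm{SL}_2$ via the Iwasawa decomposition $g=a_tk$ with $a_t\in A^+$ and $k\in SO(2)$. Combining the Weil-representation formula $\rho(a_tk)f'(\xi)=\chi_Q(t)|t|^3(\rho(k)f')(t\xi)$ with the fact that $\Sigma_{\mathrm{sig}}$ is stable under the scaling action of $A$, the desired vanishing at $\xi\in\Sigma_{\mathrm{sig}}$ reduces to the claim
\[
(\rho(k)f')(\eta)=0\quad\text{for every }\eta\in\Sigma_{\mathrm{sig}}\text{ and every }k\in SO(2).
\]

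For $k=\mathrm{id}$ this is immediate, since by construction $f'$ is supported on the union of open $G^+$-orbits in $V$, whose complement contains $X\cap\{P_1P_2P=0\}=\Sigma_{\mathrm{sig}}$. The main obstacle is the Weyl element $w\in SO(2)$, for which $\rho(w)f'$ is a Fourier transform of $f'$ over $V$; such transforms do not preserve the support of $f'$. To handle this I would exploit the precise tensor-product structure \eqref{eq:Schwartz}: on each open orbit $f'$ decomposes as $f_{G^1,*}(h)f_1(\det h_1^2)f_2(\det h_2^2)$ with $f_1,f_2\in C_c^\infty(\mathbb{R}^+)$ compactly supported away from $0$, and summing over the eight base points in $X^0_\pm$ should produce cancellations in the Fourier integral $\int_V f'(x)\psi(\eta^tJx)\,dx$ for $\eta\in\Sigma_{\mathrm{sig}}$. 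Making these cancellations explicit, using the symmetry among the eight open $G^+$-orbits comprising the support of $f'$ together with the Fourier-theoretic structure of the Weil representation on the split signature space $(V,Q)$, is where I expect the technical core of the argument to lie.
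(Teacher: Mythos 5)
The paper offers no proof of this lemma, simply calling it ``a direct consequence of the above choice of test functions,'' so there is no paper argument to compare against. Your reduction of the $\hat f$ case to the $f$ case via $\mathcal{F}_V(f'\otimes f'')=f'\otimes\mathcal{F}_{\mathrm{sp}}(f'')$ is correct, and so is the Iwasawa parametrization of $I(f'\otimes f'')(\xi)$ showing that (since $f''\in\mathcal{S}(\mathbb{R}^2)$ is arbitrary in the construction, and $(t,k)\mapsto k^t(0,t^{-1})$ sweeps out $\mathbb{R}^2\setminus\{0\}$) the vanishing of $I(f'\otimes f'')$ on $\Sigma_{\mathrm{sig}}$ is essentially equivalent to the claim that $\rho(k)f'$ vanishes on $\Sigma_{\mathrm{sig}}$ for every $k\in SO(2)$.

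The step you leave open is, however, a genuine gap and not a routine technicality. For the Weyl element $k_{\pi/2}$, $\rho(k_{\pi/2})f'$ is (up to a Weil constant) the full $J$-Fourier transform of $f'$ on $V$, and Fourier transform does not preserve the support condition that $f'$ vanish near the boundary of the open $G^+$-orbits; already in the simplest split model (one hyperbolic plane, $Q(x_1,x_2)=x_1x_2$) the Fourier transform of a test function vanishing in a neighborhood of the isotropic cone need not vanish on the cone, because that vanishing amounts to a nontrivial moment condition on $f'$. Nothing in the construction \eqref{eq:Schwartz} of $f'$ as a sum over the eight base orbits manifestly imposes such moment conditions or produces the cancellations you allude to; that part of your argument is entirely speculative. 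Your proposal therefore correctly identifies exactly where the difficulty lies — whether $\mathcal{S}(X)$-Fourier transform preserves vanishing on $\Sigma_{\mathrm{sig}}$ for these test functions — but does not close it, and the paper's own claim that this is ``direct'' is, as your analysis shows, not justified by the construction as written.
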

Furthermore, we can show the constant terms appeared in Poisson summation of Theorem \ref{thm:possion summation} are all equal to zero. This follows from the next lemma. We use symbol $e(x)$ for $\exp 2 \pi \sqrt{-1} x$.
\begin{lemma}
With $\tilde{f} = f' \otimes f'' \in \mathcal{S}(V \oplus \mathbb{R}^2)$ chosen before, we have for $i \leq 3$,
\begin{equation*}
\begin{split}
d_{3, i}(\tilde{f})(0_{V_i}, 0, a) = 0 \quad \text{and } \quad d_{3, i}(F_{V}(\tilde{f}))(0_{V_i}, 0, a) = 0.
\end{split}
\end{equation*}
\end{lemma}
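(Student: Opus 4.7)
The plan is to unwind the iterated transform $d_{3,i}$ explicitly and observe that evaluation at $(0_{V_i}, 0, a)$ forces the remaining integral of $f'$ to range over a subset of $V$ lying entirely in $\Sigma_{\mathrm{sig}}$; since $f'$ vanishes there by the preceding lemma, the result follows.

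First, I would unwind the definitions. Using the convention (consistent with Lemma \ref{lemma:h} applied at $(\xi_1,\xi_2) = (0,0)$) that the restriction $g|_{V_j}$ evaluates the external $\mathbb{A}_F^2$ at $(0,0)$, and identifying $V = V_1 \oplus F^2 \oplus F^2$ with the first internal $F^2$ corresponding to the $(x_1,x_3)$-coordinates of $V_2$ and the second to the $(x_4,-x_6)$-coordinates of $V_3$, a direct computation gives
\begin{align*}
d_{3,3}(\tilde f)(\xi, u_1, u_2) &= f'(\xi)\, f''(u_1,u_2),\\
d_{3,2}(\tilde f)(\eta, \alpha_2, \beta_2) &= f''(0,0) \int_{\mathbb{R}} f'(\eta, \alpha_2, x)\, \psi(\beta_2 x)\, dx,\\
d_{3,1}(\tilde f)(\zeta, \alpha_1, \beta_1) &= f''(0,0) \iint_{\mathbb{R}^2} f'(\zeta, \alpha_1, y, 0, x)\, \psi(\beta_1 y)\, dy\, dx.
\end{align*}

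Next, I would evaluate at $(0_{V_i}, 0, a)$. For $i = 3$ one directly has $\tilde f(0,0,a) = f'(0)\,f''(0,a) = 0$ because $0 \in \Sigma_{\mathrm{sig}}$. For $i = 2$ the argument of $f'$ in the integrand is $(0_{V_2}, 0, x)$, whose entire $V_2$-part vanishes; in particular $x_1 = 0$, so $P_1 = 0$ and $f'$ vanishes identically on the $x$-integration. For $i = 1$ the argument of $f'$ becomes $(0_{V_1}, 0, y, 0, x)$, whose $V$-coordinates satisfy $x_2 = x_5 = 0$ (from $\zeta = 0$), $x_1 = 0$ (from $\alpha_1 = 0$), and $x_4 = 0$ (from the zero of the first internal-$F^2$ coordinate of $V_3$), so again $P_1 = 0$ and $f'$ vanishes on the entire $(y,x)$-domain.

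For the Fourier-transformed statement, I would observe that the symplectic Fourier transform $\mathcal{F}_V$ acts only on the external $\mathbb{R}^2$ variables, so $\mathcal{F}_V(\tilde f) = f' \otimes \mathcal{F}(f'')$ where $\mathcal{F}$ denotes the symplectic Fourier transform on $\mathbb{R}^2$. The three identities above then hold verbatim with $f''$ replaced by $\mathcal{F}(f'')$ in the scalar prefactor, and the vanishing argument is unchanged since only the $f'$ factor carries any dependence on $V$.

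The main obstacle is the bookkeeping for the nested filtration --- correctly matching each internal $F^2$ with the appropriate pair of $V$-coordinates and distinguishing the coordinates set to zero by restriction from those integrated out by the partial Fourier transforms $\mathcal{F}_{2,i}$. Once that identification is in place, the geometric content of the lemma reduces immediately to the support property $\mathrm{supp}(f') \subset V \setminus \Sigma_{\mathrm{sig}}$ built into the choice of test function in \eqref{eq:Schwartz}.
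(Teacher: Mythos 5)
Your proof is correct and follows essentially the same route as the paper's: unwind the chain of restrictions-to-$V_{j}$ and partial Fourier transforms $\mathcal{F}_{2,j}$ making up $d_{3,i}$, factor out the external $f''$ contribution, and observe that every point at which $f'$ is evaluated has $x_1 = 0$ (hence $P_1 = 0$), so $f'$ vanishes there by the support condition built into \eqref{eq:Schwartz}. Two small remarks. First, the lemma is stated for $i \leq 3$ and the paper's own proof treats $i = 2, 1, 0$ (leaving $i = 3$ tacit); you treat $i = 3, 2, 1$ and omit $i = 0$, which is needed to kill the term $d_{3,0}(f)(0)\int_{[\mathrm{SL}_2]}1\,dg$ in the Poisson summation. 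The $i=0$ case is the same argument with one more integration, so this is an omission rather than a gap, but it should be included. Second, your justification for $i=3$, namely ``$f'(0)=0$ because $0 \in \Sigma_{\mathrm{sig}}$,'' relies on the introduction's definition of $\Sigma_{\mathrm{sig}}$; in Section~3 the set $\Sigma_{\mathrm{sig}}$ is defined inside $X^0 = X - \{0\}$ and so does not contain the origin. The cleaner justification, which is also what actually drives your $i=1,2$ cases, is that $f'$ is supported on the union of open $B^{+}$-orbits, which is disjoint from $\{P_1 P_2 P = 0\}$, and in particular $f'(0)=0$.
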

\begin{proof}
By definition of $d_{3, 2}$
\begin{equation*}
\begin{split}
d_{3, 2}(\tilde{f})(0_{V_2}, 0, a) = \int \tilde{f}(0_{V_2}, 0, u, 0, 0) e(u a) du = 0
\end{split}
\end{equation*}
because $(0_{V_2}, 0, u) \in \Sigma_{\text{sig}}$. Moreover, 
\begin{equation*}
\begin{split}
d_{3, 1}(\tilde{f})(0_{V_1}, 0, a) &= \iint \tilde{f}(0_{V_1}, 0, v, 0, u, 0, 0)  e(va) du dv = 0 \\
d_{3, 0}(\tilde{f})(0, a) &= \iiint \tilde{f}(0, w, 0, v, 0, u, 0, 0) e(wa) du dv dw = 0
\end{split}
\end{equation*}
for the same reason. 
The proof for $\mathcal{F}_V(\tilde{f})$ is similar.
\end{proof}
With this lemma, the Poisson summation formula can be simplified to only sum over quadrics. 
\begin{proposition}
\begin{equation*}
\label{prop:simplified Possion}
\begin{split}
\sum_{n=1}^{\infty} \sum_{\xi \in n X_i^0(\mathbb{Z})} n^{\frac{\mathrm{dim \ V_i}}{2} - 2} I(d_{3, i} (\tilde{f})) (\xi) = 
\sum_{n=1}^{\infty} \sum_{\xi \in n X_i^0(\mathbb{Z})} n^{\frac{\mathrm{dim \ V_i}}{2} - 2} I(d_{3, i} (  \mathcal{F}_{V} (\tilde{f}))) (\xi).
\end{split}
\end{equation*}    
\end{proposition}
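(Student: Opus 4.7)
My plan is to derive the stated identity from the global Poisson summation formula of Theorem~\ref{thm:possion summation}, combined with the preceding vanishing Lemma and the Archimedean summation Proposition that immediately precedes it. Throughout, an implicit summation over $i=1,2,3$ on both sides is understood, matching the form of Theorem~\ref{thm:possion summation}.

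The first step is to apply Theorem~\ref{thm:possion summation} to the adelic Schwartz function $\tilde f\otimes 1_{V_4(\hat{\mathbb Z})}$, which produces
\[
\sum_{i=1}^{3}c_i(d_{3,i}(\tilde f))+\sum_{i=1}^{3}\sum_{\xi\in X_i^{0}(\mathbb Q)}I(d_{3,i}(\tilde f))(\xi)+d_{3,0}(\tilde f)(0)\int_{[\mathrm{SL}_2]}1\,dg
\]
equal to the corresponding expression with $\mathcal F_V(\tilde f)$ in place of $\tilde f$. The preceding Lemma then supplies $d_{3,i}(\tilde f)(0_{V_i},0,a)=0$ and the analogue for $\mathcal F_V(\tilde f)$, for every $i\le 3$ and every $a$. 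Specializing $i=0$, $a=0$ immediately kills both $d_{3,0}(\cdot)(0)$ terms. For the constant-term contributions $c_i$, recall
\[
Z_i(f,s)=\int_{\mathbb A^{\times}}|a|^{s}\chi_{Q_i}(a)\int_{K}r_i(k)f(0_{V_i},0,a)\,dk\,d^{\times}a,
\]
with $c_i$ a regularized value at $s=2-\dim V_i/2$. The $k=e$ contribution vanishes by the Lemma; for general $k\in K$ I would transport the $K$-action through the partial Fourier chain $d_{3,i}$ using the commutation relation \eqref{eq: F2 commutative}, and then exploit the tensor structure $\tilde f=f'\otimes f''$ together with the support condition $\mathrm{supp}(f')\subset V\setminus\Sigma_{\mathrm{sig}}$ to force the entire integrand of $Z_i$ to vanish, whence $c_i(d_{3,i}(\tilde f))=0$; the same reasoning kills $c_i(d_{3,i}(\mathcal F_V(\tilde f)))$.

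After these cancellations what remains is
\[
\sum_{i=1}^{3}\sum_{\xi\in X_i^{0}(\mathbb Q)}I(d_{3,i}(\tilde f))(\xi)=\sum_{i=1}^{3}\sum_{\xi\in X_i^{0}(\mathbb Q)}I(d_{3,i}(\mathcal F_V(\tilde f)))(\xi),
\]
and the Archimedean summation Proposition, applied termwise in $i$, converts each global sum over $X_i^0(\mathbb Q)$ into the weighted lattice sum $\sum_{n\ge 1}\sum_{\xi\in nX_i^0(\mathbb Z)}n^{\dim V_i/2-2}I(d_{3,i}(\tilde f))(\xi)$, yielding the stated identity. The principal obstacle lies in verifying the vanishing of the $c_i$: the preceding Lemma controls only the one pointwise value $d_{3,i}(\tilde f)(0_{V_i},0,a)$, while $Z_i$ averages $r_i(k)$-translates over the entire compact $K$ via the Weil representation, and this averaging in general mixes function values across the singular set. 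Careful bookkeeping of how the tensor structure $\tilde f=f'\otimes f''$ and the support condition on $f'$ survive both the partial Fourier chain $d_{3,i}$ and the Weil action of $K$ is exactly what is required to secure the vanishing.
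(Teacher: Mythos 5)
Your approach is the same as the paper's: apply Theorem~\ref{thm:possion summation}, use the preceding Lemma and the Archimedean-place Proposition to discard the boundary contributions, and pass to the weighted lattice sum, with the identity read as implicitly summed over $i=1,2,3$ (which is indeed how it is used later in the proof of the main theorem). What you get right, and what the paper itself passes over quickly, is that the Lemma only gives the pointwise value $d_{3,i}(\tilde f)(0_{V_i},0,a)=0$, whereas the constant term $c_i$ is a regularized value of the Tate integral
$Z_i(f,s)=\int_{\mathbb A^\times}|a|^s\chi_{Q_i}(a)\int_K r_i(k)f(0_{V_i},0,a)\,dk\,d^\times a$.
The $K$-average does not vanish termwise: the archimedean Weil representation $\rho_i(k)$, $k\in\mathrm{SO}_2(\mathbb R)$, does not preserve vanishing at $0_{V_i}$ --- the Weyl element of $K$ acts essentially as a Fourier transform in the $V_i$-variable, which can certainly be nonzero at the origin even when the input function vanishes there. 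So your concern is a legitimate gap, not a pedantic worry, and the paper's own justification (``With this lemma, the Poisson summation formula can be simplified'') is terse at exactly this point.

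However, your sketch for closing the gap is not adequate as written. You propose to transport the $K$-action through $d_{3,i}$ via \eqref{eq: F2 commutative}. That relation gives $r_i(k)\circ\mathcal F_{2,i}=\mathcal F_{2,i}\circ\rho_{i+1}(k)$, but each step $d_{j+1,j}(g)=\mathcal F_{2,j}(g|_{V_{j+1}})$ also contains a restriction $\cdot|_{V_{j+1}}$, and restriction does not intertwine the Weil representations of adjacent levels, so the chain does not simply commute past $K$. Even granting the first step, you are reduced to showing that $\rho_{3}(k)(\tilde f|_{V_3})$ vanishes on a slice of $V_3$ lying inside $\Sigma_{\mathrm{sig}}$ --- the same kind of vanishing you started with, now after a Weil twist --- and the support condition $\mathrm{supp}(f')\subset V\setminus\Sigma_{\mathrm{sig}}$ does not survive that twist (Fourier transforms destroy both compact support and vanishing on a subvariety). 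What would actually close the gap is either (a) a direct archimedean computation that $\int_K r_i(k)\,d_{3,i}(\tilde f)(0_{V_i},0,a)\,dk$ vanishes for the specific test functions built in \eqref{eq:Schwartz}, or (b) showing that the constant terms are merely \emph{equal} on the two sides of Theorem~\ref{thm:possion summation}, i.e.\ $c_i(d_{3,i}(\tilde f))=c_i(d_{3,i}(\mathcal F_V\tilde f))$, which is all the stated Proposition requires. Your skeleton is correct; the step you flag is the one that still needs a genuine argument, and the hand-waving you offer there does not yet supply it.
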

Let $L' = L - L \cap \Sigma_{\mathrm{sig}}$ be the non-singular part of the lattice $L$. Following Shintani and in view of Proposition \ref{prop:summation at Archimedean place} ,  given a Schwartz function $f$ and its Fourier transform $\hat{f}$, zeta functions are defined by
\begin{equation*}
\begin{split}
Z(f, s_1, s_2, w) &:= \int_{B^{+} / B^{+}_{\mathbb{Z}}} \chi_1(h)^{s_1} \chi_2(h)^{s_2} \chi(h)^{w} \sum_{n=1}^{\infty} \sum_{v \in n L'} n^{\frac{\mathrm{dim}(V)}{2} -2}f(h \cdot v) dh, \\
Z(\hat{f}, s_1, s_2, w) &:= \int_{B^{+} / B^{+}_{\mathbb{Z}}} \chi_1(h)^{s_1} \chi_2(h)^{s_2} \chi(h)^{w} \sum_{n=1}^{\infty} \sum_{v \in n L'} n^{\frac{\mathrm{dim}(V)}{2} -2} \hat{f}(h \cdot v) dh
\end{split}
\end{equation*}
where $B^{+}_{\mathbb{Z}} = B^{+} \cap ( \mathrm{SL}_2(\mathbb{Z}) \times \mathrm{SL}_2(\mathbb{Z}) )$. 
Set
\begin{equation*}
\begin{split}
x_{+} = \left( \left( \begin{matrix}
    1    & 0\\
    0 &  -1
\end{matrix} \right), \left( \begin{matrix}
     1    & 0\\
    0 &   -1
\end{matrix} \right) \right), \quad x_{-} = \left( \left( \begin{matrix}
    1    & 0\\
    0 &  1
\end{matrix} \right), \left( \begin{matrix}
     1    & 0\\
    0 &   1
\end{matrix} \right) \right).
\end{split}
\end{equation*}
For each $f \in \mathcal{S}(X)$, define the orbital integral and the orbital integral associated with its Fourier transform
\begin{equation*}
\begin{split}
\Phi_{\pm}(f, s_1, s_2, w) &:= \int_{B^{+}} f(h \cdot x_{\pm})   \chi_1(h)^{s_1} \chi_2(h)^{s_2} \chi(h)^{w} dh, \\
\Phi_{\pm}(\hat{f}, s_1, s_2, w) &:= \int_{B^{+}} \hat{f}(h \cdot x_{\pm})   \chi_1(h)^{s_1} \chi_2(h)^{s_2} \chi(h)^{w} dh.
\end{split}
\end{equation*}
By Proposition \ref{prop:Dixmier-Malliavin}, there is $f' \in \mathcal{S}(V)$ such that $f'|_{X^0} = f$. By \cite[Lemma 3]{shintani1975zeta} these orbital integrals are absolutely convergent in the domain $\{(s_1, s_2, w): \mathrm{Re}(s_1) > 1, \mathrm{Re}(s_2) > 1,  \mathrm{Re}(w) > 2\}$. 
Next, define multiple Dirichlet series as follows
\begin{equation*}
\begin{split}
\xi_{+}(s_1, s_2, w) &:= \sum_{D > 0} \frac{1}{|D|^{w}} \sum_{n, m \geq 1} \frac{\sum_{\substack{d \geq 1, d^2|D \\ d|m, d|n}} d \cdot A(\frac{4m}{d}, \frac{D}{d^2}) A(\frac{4n}{d}, \frac{D}{d^2})}{m^{s_1} n^{s_2}}, \\
\xi_{-}(s_1, s_2, w) &:= \sum_{D < 0} \frac{1}{|D|^{w}} \sum_{n, m \geq 1} \frac{\sum_{\substack{d \geq 1, d^2|D \\ d|m, d|n}} d \cdot A(\frac{4m}{d}, \frac{D}{d^2}) A(\frac{4n}{d}, \frac{D}{d^2})}{m^{s_1} n^{s_2}}.
\end{split}
\end{equation*}
Then we have
\begin{proposition}
\label{prop:Z Phi relation}
Let $f$ and $\hat{f}$ be even functions. For $\mathrm{Re}(s_1) > 1, \mathrm{Re}(s_2) > 1, \mathrm{Re}(w) > 3$, the zeta functions $Z(f, s_1, s_2, w)$ and $Z(\hat{f}, s_1, s_2, w)$ are absolutely convergent  and they satisfy 
\begin{equation*}
\begin{split}
Z(f, s_1, s_2, w) & = 4^{w} \xi_{+}(s_1, s_2, w)\Phi_{+}(f, s_1, s_2, w) + 4^{w} \xi_{-}(s_1, s_2, w)\Phi_{-}(f, s_1, s_2, w) \\
Z(\hat{f}, s_1, s_2, w) & = 4^{w} \xi_{+}(s_1, s_2, w)\Phi_{+}(\hat{f}, s_1, s_2, w) + 4^{w} \xi_{-}(s_1, s_2, w)\Phi_{-}(\hat{f}, s_1, s_2, w).
\end{split}
\end{equation*}
\end{proposition}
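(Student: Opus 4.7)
The strategy is an unfolding of the integral defining $Z(f, s_1, s_2, w)$ along the $B^+_\mathbb{Z}$-orbit decomposition of $L'$, followed by a reindexing that matches the resulting series to $\xi_\pm(s_1, s_2, w)$. Because $B^+_\mathbb{Z} = B^+ \cap (\mathrm{SL}_2(\mathbb{Z}) \times \mathrm{SL}_2(\mathbb{Z}))$ forces the diagonal entries of each $\mathrm{SL}_2(\mathbb{Z})$-factor to equal $1$, the group $B^+_\mathbb{Z}$ is the product $N_\mathbb{Z} \times N_\mathbb{Z}$ of integer lower-unipotent matrices, so the characters $\chi_1, \chi_2, \chi$ are trivial on $B^+_\mathbb{Z}$. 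Since the eight open $B^+$-orbits in $X^0_\pm$ all have trivial stabilizer, the standard unfolding yields
\begin{equation*}
Z(f, s_1, s_2, w) = \sum_{n \geq 1} n \sum_{[w] \in L'/B^+_\mathbb{Z}} \int_{B^+} \chi_1(h)^{s_1} \chi_2(h)^{s_2} \chi(h)^w f(h \cdot n w) \, dh.
\end{equation*}

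To absorb the scalar $n$, I would use $\iota(d_{\sqrt{n}} I_2) \cdot w = n w$: substituting $h \mapsto d_{\sqrt{n}} I_2 \cdot h$, which is left-Haar invariant, contributes a factor $n^{-s_1 - s_2 - 2w}$ since $\chi_i$ pulls out $n^{-1}$ and $\chi$ pulls out $n^{-2}$. For each orbit representative, I write $w = h_0 \cdot x_\pm$ with $\pm = \mathrm{sign}(P(w))$ and substitute $h \mapsto h h_0^{-1}$; this produces the prefactor $\chi_1(h_0)^{-s_1} \chi_2(h_0)^{-s_2} \chi(h_0)^{-w}$ times $\Phi_\pm(f, s_1, s_2, w)$. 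Using the normalizations $|P_1(x_\pm)| = |P_2(x_\pm)| = 1$ and $|P(x_\pm)| = 4$, the prefactor becomes $4^w |P_1(w)|^{-s_1} |P_2(w)|^{-s_2} |P(w)|^{-w}$.

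It remains to count $B^+_\mathbb{Z}$-orbits and reindex. The action of $N_\mathbb{Z}$ on a binary quadratic form is $(x_1, x_2, x_3) \mapsto (x_1, x_2 + 2u x_1, x_3 + u x_2 + u^2 x_1)$, so the orbits with $x_1 = a$ and discriminant $D$ are parameterized by $x_2 \bmod 2|a|$ satisfying $x_2^2 \equiv D \pmod{4|a|}$. Multiplying the counts for the two binary forms and summing over the four sign combinations of $(\mathrm{sign}(P_1(w)), \mathrm{sign}(P_2(w)))$ that yield the four open $B^+$-orbits in each of $X^0_\pm$, together with evenness of $f$ to equate orbital integrals on $B^+$-orbits paired under $v \mapsto -v$, the total count for fixed $(|P_1|, |P_2|, P) = (m, n, D)$ becomes $A(4m, D) A(4n, D)$. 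Rewriting $(D, m, n) = (d^2 D_0, d m_0, d n_0)$ then converts $\sum_{n \geq 1} n^{1 - s_1 - s_2 - 2w}$ times the primitive series over $(D_0, m_0, n_0)$ into precisely the triple series defining $\xi_\pm$, giving the claimed identity. The argument for $Z(\hat{f})$ is identical with $f$ replaced by $\hat{f}$.

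The main obstacle is the orbit count combined with the sign analysis: each individual $B^+$-orbit contributes only $\tfrac{1}{4} A(4|a|, D) A(4|c|, D)$ lattice orbits (because $x_2$ varies modulo $2|x_1|$ while $A(4|x_1|, D)$ counts residues modulo $4|x_1|$), and the factor of $4$ is recovered by summing over the four sign patterns, with evenness of $f$ ensuring that the corresponding four orbital integrals collapse to $\Phi_\pm$. Absolute convergence in the domain $\mathrm{Re}(s_1), \mathrm{Re}(s_2) > 1$, $\mathrm{Re}(w) > 3$ follows from Shintani's convergence estimate for $\Phi_\pm$ combined with the bound $A(m, D) = O_\epsilon(m^\epsilon)$ for the Dirichlet coefficients.
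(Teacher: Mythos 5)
Your argument follows essentially the same route as the paper's proof: unfold the integral over $B^{+}/B^{+}_{\mathbb{Z}}$ along $B^{+}_{\mathbb{Z}}$-orbits of $L'$, use trivial stabilizers to change variables $h \mapsto h h_v^{-1}$ and isolate $\Phi_{\pm}$, count lattice orbits with fixed invariants $(|P_1|,|P_2|,P)=(m,n,D)$ via $A(4m,D)A(4n,D)$, and reindex $(D,m,n)=(d^2 D_0, d m_0, d n_0)$ to assemble the $d$-sum and recover $\xi_{\pm}$. You are in fact more explicit than the paper's terse computation about the $\tfrac{1}{4}$ normalization per open $B^{+}$-orbit (coming from the unipotent period modulo $2|a|$ against the $A(4|a|,D)$ count) and where the scalar $n^{\dim V/2-2}=n$ is absorbed, which is the right bookkeeping. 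The one step that both you and the paper leave implicit is the final collapse of the four open $B^{+}$-orbits of $X^0_{+}$ onto the single orbital integral $\Phi_{+}$: evenness $f(-v)=f(v)$ only identifies $\Phi_{(\epsilon_1,\epsilon_2)}$ with $\Phi_{(-\epsilon_1,-\epsilon_2)}$, so a priori the unfolding produces $\tfrac{1}{4}(\Phi_{(+,+)}+\Phi_{(+,-)}+\Phi_{(-,+)}+\Phi_{(-,-)})=\tfrac{1}{2}(\Phi_{(+,+)}+\Phi_{(+,-)})$ rather than $\Phi_{(+,+)}$; to finish one needs the stronger factor-wise sign symmetry $f(x,-y)=f(x,y)$ (which holds for the separable test functions the paper actually uses), and it would be worth stating this explicitly rather than attributing it to evenness alone.
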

\begin{proof}
For any $x \in B^{+} \cdot x_{+} \cap L$, there is a unique $h \in B^{+}$ such that $h x_{+} = x$.  Let $h_x$ denote this $h$. Write
\begin{equation*}
\begin{split}
x = \left( \left( \begin{matrix}
    m    & b/2\\
    b/2 &  c
\end{matrix} \right), \left( \begin{matrix}
     n    & b'/2\\
    b'/2 &  c'
\end{matrix} \right) \right), \quad D = b^2 - 4mc = (b')^2 - 4n c' > 0. 
\end{split}
\end{equation*}
Then 
\begin{equation*}
\begin{split}
(m, b, c)  = (\mu^2 t^2, 2\mu^2u, \mu^2 t^{-2}(u^2-1)), \quad  (n, b', c') = (\mu^2 s^2, 2\mu^2 v, \mu^2 s^{-2}(v^2-1)).
\end{split}
\end{equation*}
It yields
\begin{equation*}
\begin{split}
\chi_1(h_x) = m, \chi_2(h_x) = n, \chi(h_x) = D/4.
\end{split}
\end{equation*}

For the multiple Dirichlet series $\xi_{\pm}(s_1, s_2, w)$, the absolute convergence when $\mathrm{Re}(s_1) > 1$ follows from the fact that $A(\frac{4m}{d}, \frac{D}{d^2}) \leq A(4m, D)$ and the Dirichlet series $\sum_{m=1}^{\infty} A(4m, D) m^{-s_1}$ converges absolutely in the domain $\mathrm{Re}(s_1) > 1$. The reason for the absolute convergence when $\mathrm{Re}(s_2) > 1$ is the same. Since 
\begin{equation*}
\begin{split}
\sum_{\substack{d \geq 1, d^2|D \\ d|m, d|n}} d \cdot A(\frac{4m}{d}, \frac{D}{d^2})  A(\frac{4n}{d}, \frac{D}{d^2}) \leq \sigma_1(D) A(4m, D) A(4n, D),
\end{split}
\end{equation*}
and the series $\sum_{D = 1}^{\infty} A(4m, D) D^{-w}$, $\sum_{D = 1}^{\infty} A(4n, D) D^{-w}$  and $\sum_{D = 1}^{\infty} \sigma_1(D) D^{-w}$ are all absolutely convergent when $\mathrm{Re}(w) > 1$, it implies that the multiple Dirichlet series are absolutely convergent in the domain $\{(s_1, s_2, w): \mathrm{Re}(s_1) > 1, \mathrm{Re}(s_2) > 1,  \mathrm{Re}(w) > 3 \}$.

Therefore, unfolding the summation over lattice points, counting orbits and changing variables $h \to h h_x^{-1}$ we get
\begin{equation*}
\begin{split}
&\int_{B^{+} / B^{+}_{\mathbb{Z}}} \chi_1(h)^{s_1} \chi_2(h)^{s_2} \chi(h)^{w} \sum_{d=1}^{\infty} \sum_{x \in d \cdot L' \cap X^{+}} d^{\frac{\mathrm{dim} V}{2} -2} f(h \cdot x) dh \\
= & 4 \int_{B^{+}} \chi_1(h)^{s_1} \chi_2(h)^{s_2} \chi(h)^{w} \sum_{d= 1}^{\infty}\frac{1}{4} \frac{4^{w}}{D^{w}} \sum_{\substack{m, n , D \geq 1,  \\ d^2|D, d|m, d|n}} \frac{d \cdot A(\frac{4m}{d}, \frac{D}{d^2}) A(\frac{4n}{d}, \frac{D}{d^2})}{m^{s_1} n^{s_2}}  f(h \cdot x_{+})  dh \\
= & 4^{w} \sum_{m, n, D = 1}^{\infty} \sum_{\substack{d \geq 1, d^2|D \\ d|m, d|n}} \frac{d \cdot A(\frac{4m}{d}, \frac{D}{d^2}) A(\frac{4n}{d}, \frac{D}{d^2})}{m^{s_1} n^{s_2} D^{w}} \int_{B^{+}} \chi_1(h)^{s_1} \chi_2(h)^{s_2} \chi(h)^{w}  f(h \cdot x_{+})  dh.
\end{split}
\end{equation*}

\end{proof}
Define
\begin{equation*}
\begin{split}
Z_{+}(f, s_1, s_2, w) &:= \int_{B^{+} / B^{+}_{\mathbb{Z}}, \chi(h)\geq 1} \chi_1(h)^{s_1} \chi_2(h)^{s_2} \chi(h)^{w} \sum_{n=1}^{\infty} \sum_{v \in n L'} n^{\frac{\mathrm{dim} V}{2} -2} f(h \cdot v) dh, \\
Z_{+}(\hat{f}, s_1, s_2, w) &:= \int_{B^{+} / B^{+}_{\mathbb{Z}}, \chi(h)\geq 1} \chi_1(h)^{s_1} \chi_2(h)^{s_2} \chi(h)^{w} \sum_{n=1}^{\infty} \sum_{v \in n L'} n^{\frac{\mathrm{dim} V}{2} -2} \hat{f} (h \cdot v) dh.
\end{split}
\end{equation*}
From Proposition \ref{prop:Z Phi relation} it follows that both $Z_{+}(f, s_1, s_2, w)$ and $Z_{+}(\hat{f}, s_1, s_2, w)$ are absolutely convergent in the domain $\{(s_1, s_2, w): \mathrm{Re}(s_1) > 1, \mathrm{Re}(s_2) > 1, w \in \mathbb{C}\}$.
\begin{lemma}
\label{lemma:changing variable from h to h^-1}
For $\mathrm{Re}(s_1) > 1$ and $\mathrm{Re}(s_2) > 1$, 
\begin{equation*}
\begin{split}
\int_{B^{+} / B^{+}_{\mathbb{Z}}, \chi(h)\leq 1} \chi_1(h)^{s_1} \chi_2(h)^{s_2} \chi(h)^{w} \sum_{L'} f( \chi^{-1}(h)h \cdot  v) dh = Z_{+}(f, s_1, s_2, -s_1-s_2-w)
\end{split}
\end{equation*}
where $\chi(h)^{-1} h$ denotes the product $\chi(h)^{-1}I_V \cdot \iota(h)$.
\end{lemma}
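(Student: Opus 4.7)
The plan is to prove the lemma by a change of variables on $B^+$ that absorbs the scalar $\chi^{-1}(h)$ into the group element. Any $h\in B^+$ has a unique Iwasawa decomposition with parameters $(\mu, t, s, u, v)$ as in \eqref{eq:h form}; define an involution $\tau: B^+\to B^+$ by inverting $\mu$ and keeping $t, s, u, v$ fixed. Since $\iota$ sends the central $\Lambda$-block with parameter $\mu$ to the scalar $\mu^2 I_V$ while $\chi(h) = \mu^4$, a direct computation gives $\chi(h)^{-1}\iota(h) = \mu^{-4}\cdot\mu^2\cdot \iota(AN\text{-part of }h) = \iota(\tau(h))$. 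Consequently $\chi^{-1}(h)h\cdot v = \tau(h)\cdot v$ on $V$.

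Substituting $h' = \tau(h)$ and using $\chi_1(h) = \mu^2 t^2$, $\chi_2(h) = \mu^2 s^2$, $\chi(h) = \mu^4$, the characters transform according to $\chi_i(h) = \chi_i(h')/\chi(h')$ for $i=1,2$ and $\chi(h) = \chi(h')^{-1}$, giving
\[
\chi_1(h)^{s_1}\chi_2(h)^{s_2}\chi(h)^w = \chi_1(h')^{s_1}\chi_2(h')^{s_2}\chi(h')^{-s_1-s_2-w}.
\]
The Haar measure $du\,dv\,t^{-2}s^{-2}\,d^\times t\,d^\times s\,d^\times\mu$ is invariant under $\mu\mapsto\mu^{-1}$, the region $\chi(h)\le 1$ maps bijectively to $\chi(h')\ge 1$, and $B^+_\mathbb{Z}$ -- which consists of the integral unipotent shifts in $u$ and $v$ with $\mu = t = s = 1$ -- commutes with $\tau$, so the involution descends to the quotient $B^+/B^+_\mathbb{Z}$. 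The summation $\sum_{v\in L'}f(\tau(h)\cdot v)$ is then indexed identically to the summation in the definition of $Z_+$, and assembling the identities above shows that the left-hand side equals $Z_+(f, s_1, s_2, -s_1-s_2-w)$.

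The argument is essentially pure bookkeeping; the one point that deserves care is the verification that $\chi(h)^{-1}\iota(h)$ actually remains inside $\iota(B^+)$ rather than merely landing in $\mathrm{GO}_Q(\mathbb{R})$. This is ensured by the centrality of $\iota$ applied to $\Lambda_\mu$ (which equals $\mu^2 I_V$) together with the fact that the rescaling by $\mu^{-2}$ is itself realized inside $\iota(B^+)$ as the image of $\Lambda_{\mu^{-1}}$, so the map $\tau$ is well defined. Absolute convergence of both sides in the strip $\mathrm{Re}(s_1), \mathrm{Re}(s_2) > 1$ is already secured by Proposition \ref{prop:Z Phi relation} and the absolute convergence of $Z_+$ for arbitrary $w\in\mathbb{C}$ noted immediately above, so no further analytic input is required.
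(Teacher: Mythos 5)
Your proof is correct and is essentially the same argument as the paper's (which simply says "change variable $\mu \to \mu^{-1}$ in the Iwasawa decomposition and the result follows"); you have merely spelled out the details the paper leaves implicit, namely the identification $\chi(h)^{-1}\iota(h) = \iota(\tau(h))$, the transformation of the characters, the invariance of the Haar measure, and the compatibility with the quotient by $B^{+}_{\mathbb{Z}}$.
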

\begin{proof}
After changing variable $\mu \to \mu^{-1}$ in  the Iwasawa decomposition of $h = \left( \begin{matrix}
    d_{\mu} & \\
        &d_{\mu}
\end{matrix}  \right) \left( \begin{matrix}
    a_t & \\
        &a_{s}
\end{matrix}  \right) \left( \begin{matrix}
    n_u & \\
        &n_v
\end{matrix}  \right)$, it gives
\begin{equation*}
\begin{split}
\chi_1(h) \to \chi_1(h) \chi(h)^{-1}, \chi_2(h) \to \chi_2(h) \chi(h)^{-1}, \chi(h)^{-1} h \to h.
\end{split}
\end{equation*}
Then the result follows.
\end{proof}

\subsection{Functional equation of zeta functions and  multiple Dirichlet series}
Now we are ready to prove the main result of this paper.
\begin{theorem}
\label{thm:Z functional equation}
For $\mathrm{Re}(s_1), \mathrm{Re}(s_2) > 1$,  the zeta functions $Z(f, s_1, s_2, w)$ and $Z(\hat{f}, s_1, s_2, w)$ can be analytically continued as entire functions in the whole $w$-plane which satisfy the following functional equation
\begin{equation*}
\begin{split}
Z( f, s_1, s_2, w)  &=  Z(\hat{f}, s_1, s_2, 2-s_1-s_2-w).
\end{split}
\end{equation*}
\end{theorem}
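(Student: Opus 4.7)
The plan is to emulate Shintani's classical argument: split $Z(f,s_1,s_2,w)$ along the hypersurface $\chi(h)=1$ in $B^+/B^+_{\mathbb{Z}}$, apply the Poisson summation formula on the half where $\chi(h)\le 1$, and then recognize the transformed integral as an instance of $Z_+(\hat f,\cdot)$ at shifted arguments. Write $Z(f,s_1,s_2,w)=Z_+(f,s_1,s_2,w)+Z_{\le}(f,s_1,s_2,w)$, where $Z_{\le}$ is the same integral taken over $\{h\in B^+/B^+_{\mathbb{Z}}:\chi(h)\le 1\}$. By Proposition \ref{prop:Z Phi relation} the piece $Z_+(f,\cdot)$ is absolutely convergent, hence entire in $w$, throughout $\mathrm{Re}(s_1),\mathrm{Re}(s_2)>1$. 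So the task reduces to computing $Z_{\le}(f,s_1,s_2,w)$ in closed form.

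On the region $\chi(h)\le 1$ I would first use the first identity in Lemma \ref{lemma:V3} to rewrite $f(h\cdot v)=I(\tilde f)(\iota(h)\cdot v)=|\chi(h)|\,I(\sigma_3(h)\tilde f)(v)$, and extend $\sum_{v\in nL'}$ to $\sum_{v\in nX_3^0(\mathbb{Z})}$, which is allowed because $f$ vanishes on $\Sigma_{\mathrm{sig}}$ for our chosen test function. The simplified Poisson summation (the proposition following the vanishing of the constant terms $c_i$ and the vanishing at $0$) is an identity of $h$-dependent sums over $X_1^0,X_2^0,X_3^0$; integrated against $\chi_1^{s_1}\chi_2^{s_2}\chi^w\cdot|\chi(h)|$ over $\{\chi(h)\le 1\}$, the $i=1,2$ contributions on both sides vanish identically by Proposition \ref{prop:vanishing nilpotent}, because $N(\mathbb{R})/N(\mathbb{Z})$ is a factor of the Iwasawa decomposition of $B^+/B^+_{\mathbb{Z}}$ on which the characters $\chi_1,\chi_2,\chi$ are constant. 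This lets me freely replace the $X_3^0$-sum for $\sigma_3(h)\tilde f$ by the $X_3^0$-sum for $\mathcal{F}_V(\sigma_3(h)\tilde f)$ inside the integral.

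Applying the second identity in Lemma \ref{lemma:V3} gives $I(\mathcal{F}_V(\sigma_3(h)\tilde f))(v)=|\chi(h)|^{-3}\hat f(\chi(h)^{-1}h\cdot v)$, and combining the prefactors $|\chi(h)|\cdot|\chi(h)|^{-3}=|\chi(h)|^{-2}$ produces
\begin{equation*}
Z_{\le}(f,s_1,s_2,w)=\int_{\{\chi(h)\le 1\}}\chi_1(h)^{s_1}\chi_2(h)^{s_2}\chi(h)^{w-2}\sum_{n\ge 1}\sum_{v\in nL'}n\,\hat f(\chi(h)^{-1}h\cdot v)\,dh,
\end{equation*}
where the sum over $nL'$ is again the same as the sum over $nX_3^0(\mathbb{Z})$ since $\hat f$ also vanishes on $\Sigma_{\mathrm{sig}}$. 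Lemma \ref{lemma:changing variable from h to h^-1} applied to $\hat f$ with $w-2$ in place of $w$ then identifies this integral with $Z_+(\hat f,s_1,s_2,2-s_1-s_2-w)$. Thus $Z(f,s_1,s_2,w)=Z_+(f,s_1,s_2,w)+Z_+(\hat f,s_1,s_2,2-s_1-s_2-w)$, which is entire in $w$. The same reasoning applied with $f$ and $\hat f$ interchanged, invoking the involutivity $\mathcal{F}_V^2=\mathrm{id}$ of the symplectic Fourier transform on $V\oplus\mathbb{R}^2$ (so that $\hat{\hat f}=f$), yields $Z(\hat f,s_1,s_2,w)=Z_+(\hat f,s_1,s_2,w)+Z_+(f,s_1,s_2,2-s_1-s_2-w)$, and substituting $w\mapsto 2-s_1-s_2-w$ gives the functional equation $Z(\hat f,s_1,s_2,2-s_1-s_2-w)=Z(f,s_1,s_2,w)$.

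The main obstacle is the rigorous justification of the step in which the $i=1,2$ contributions to the simplified Poisson summation are shown to integrate to zero inside $\{\chi(h)\le 1\}$: this requires interchanging the $n$-summation, the $v$-summation, and the $N(\mathbb{R})/N(\mathbb{Z})$-integration in a regime where none of these sums is absolutely convergent on its own. Controlling the interchange relies on the rapid decay in $v$ of $I(d_{3,i}(\sigma_3(h)\tilde f))(v)$ together with uniform-in-$h$ estimates as $\chi(h)\to 0$, both of which should follow from the Schwartz property of $\tilde f$ and the explicit formulas recorded in Lemmas \ref{lemma:V2} and \ref{lemma:V1}.
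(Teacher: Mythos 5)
Your proof is correct and takes essentially the same route as the paper: split the domain of integration at $\chi(h)=1$, use the first identity of Lemma \ref{lemma:V3} to move the group action into $\sigma_3$, invoke the simplified Poisson summation (with constant terms killed by the choice of test function), discard the $i=1,2$ pieces by the unipotent-integral vanishing of Proposition \ref{prop:vanishing nilpotent}, convert $I(\mathcal{F}_V(\sigma_3(h)\tilde f))$ back to $\hat f$ via the second identity of Lemma \ref{lemma:V3} with the factor $|\chi(h)|^{-3}$, and then change variables via Lemma \ref{lemma:changing variable from h to h^-1} to land on $Z_+(\hat f,s_1,s_2,2-s_1-s_2-w)$, giving the decomposition $Z(f,s_1,s_2,w)=Z_+(f,s_1,s_2,w)+Z_+(\hat f,s_1,s_2,2-s_1-s_2-w)$. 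Your final paragraph deriving the functional equation from this decomposition by swapping $f\leftrightarrow\hat f$ and using $\mathcal{F}_V^2=\mathrm{id}$ makes explicit a step that the paper leaves implicit ("which in turn implies"), and your closing caveat about justifying the interchange of summation and $N$-integration is a reasonable point that the paper also elides.
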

\begin{proof}
Expressing the integration in terms of Iwasawa decomposition for $B^{+}$,  the global zeta functions $Z(f, s_1, s_2, w)$ becomes
\begin{equation*}
\begin{split}
& \int_{A^{+}} \int_{\Lambda^{+} N / N_{\mathbb{Z}} }  \chi_1(a)^{s_1-\frac{1}{2}} \chi_2(a)^{s_2-\frac{1}{2}} \chi(h)^{\frac{s_1+s_2+2w}{2}} \sum_{n=1}^{\infty} \sum_{v \in n L'} n^{\frac{\mathrm{dim} V}{2} -2}  f(an h \cdot v) dh dn da.
\end{split}
\end{equation*}
By Poisson summation formula simplified in Proposition \ref{prop:simplified Possion}, 
\begin{equation*}
\begin{split}
\sum_{n=1}^{\infty} \sum_{v \in n X_i^0(\mathbb{Z})} n^{\frac{\mathrm{dim \ V_i}}{2} - 2} I(d_{3, i} (\tilde{f})) (v) = 
\sum_{n=1}^{\infty} \sum_{v\in n X_i^0(\mathbb{Z})} n^{\frac{\mathrm{dim \ V_i}}{2} - 2} I(d_{3, i} (  \mathcal{F}_{V} (\tilde{f}))) (v),
\end{split}
\end{equation*}    
we calculate the inner integration as follows
\begin{equation*}
\begin{split}
 & \int_{\Lambda^{+} N / N_{\mathbb{Z}}} \chi(h)^{\frac{s_1+s_2+2(w+1)}{2}} \sum_{v \in n L'} n^{\frac{\mathrm{dim  V}}{2} - 2} I (\sigma_3(anh) \tilde{f}) (v) dh dn \\
& = \int_{\substack{\Lambda^{+} N / N_{\mathbb{Z}}\\ \chi(h) \geq 1}} \chi(h)^{\frac{s_1+s_2+2(w+1)}{2}} \sum_{v \in n L'} n^{\frac{\mathrm{dim  V}}{2} - 2} I (\sigma_3(anh) \tilde{f}) (v) dh dn \\
&+ \int_{\substack{\Lambda^{+} N / N_{\mathbb{Z}}\\ \chi(h) \leq 1}} \chi(h)^{\frac{s_1+s_2+2(w+1)}{2}} \sum_{v \in n L'} n^{\frac{\mathrm{dim  V}}{2} - 2} I (\mathcal{F}_V(\sigma_3(anh) \tilde{f})) (v) dh dn\\
& + \int_{\substack{\Lambda^{+} N / N_{\mathbb{Z}}\\ \chi(h) \leq 1}} \chi(h)^{\frac{s_1+s_2+2(w+1)}{2}} \sum_{i=1}^2 \sum_{v \in n X_i^0} n^{\frac{\mathrm{dim  V_i}}{2} - 2} I(d_{3, i} (\mathcal{F}_V(\sigma_3(anh) \tilde{f}))) (v)  dh dn \\
& - \int_{\substack{\Lambda^{+} N / N_{\mathbb{Z}}\\ \chi(h) \leq 1}} \chi(h)^{\frac{s_1+s_2+2(w+1)}{2}} \sum_{i=1}^2 \sum_{v \in n X_i^0} n^{\frac{\mathrm{dim  V_i}}{2} - 2} 
 I(d_{3, i} (\sigma_3(anh) \tilde{f})) (v) dh dn.
\end{split}
\end{equation*}    
By Proposition \ref{prop:vanishing nilpotent}, the contribution to the integration from summations over lower-dimensional quadrics is zero. 
Hence, by Proposition \ref{prop:h}, 
\begin{equation*}
\begin{split}
 & \int_{\Lambda^{+} N / N_{\mathbb{Z}}} \chi(h)^{\frac{s_1+s_2+2w}{2}} \sum_{v \in n L'} n^{\frac{\mathrm{dim  V}}{2} - 2} f(anh \cdot v) dh dn \\
& = \int_{\substack{\Lambda^{+} N / N_{\mathbb{Z}}\\ \chi(h) \geq 1}} \chi(h)^{\frac{s_1+s_2+2w}{2}} \sum_{v \in n L'} n^{\frac{\mathrm{dim  V}}{2} - 2} f (anh \cdot v) dh dn\\
&+ \int_{\substack{\Lambda^{+} N / N_{\mathbb{Z}}\\ \chi(h) \geq 1}} \chi(h)^{\frac{-s_1-s_2+2(2 -w)}{2}} \sum_{v \in n L'} n^{\frac{\mathrm{dim  V}}{2} - 2} \hat{f}(a n h\cdot v) dh dn
\end{split}
\end{equation*}    
where the last integral follows from changing the variable $h \to h^{-1}$ as we did in the proof of Lemma \ref{lemma:changing variable from h to h^-1}.
Therefore we get
\begin{equation}
\label{eq:Z entire function}
\begin{split}
Z( f, s_1, s_2, w)  &=  Z_{+}( f, s_1, s_2, w) +  Z_{+}(\hat{f}, s_1, s_2, 2-s_1-s_2-w).
\end{split}
\end{equation}
which in turn implies 
both $Z( f, s_1, s_2, w)$ and $Z(\hat{f}, s_1, s_2, w)$ can be analytically continued as entire functions in the whole $w$-plane which satisfy the functional equation
\begin{equation*}
\begin{split}
Z( f, s_1, s_2, w)  &=  Z(\hat{f}, s_1, s_2, 2-s_1-s_2-w).
\end{split}
\end{equation*}
\end{proof}
In \cite{shintani1975zeta}, Shintani showed the double Dirichlet series \eqref{eq:double series} can be completed such that
\begin{equation*}
\begin{split}
(2 \pi)^{-s} \left(\sin \frac{\pi s}{2} \right)^{-1} \Gamma(s)\zeta(2 s) \xi_1(s, w) \quad \mathrm{and} \quad (2 \pi)^{-s} \Gamma(s)\zeta(2 s) \xi_2(s, w)
\end{split}
\end{equation*}
are all invariant under the transformation $(s, w) \to (1-s, s + w - \frac{1}{2})$. We will prove the similar invariance property for  multiple Dirichlet series $\xi_{\pm}(s_1, s_2, w)$. For this we define the usual subgroups of $\mathrm{SL}_2(\mathbb{R})$ as follows 
\begin{equation*}
\begin{split}
K &= \left\{ \left( \begin{matrix}
    \cos \theta & \sin \theta\\
     - \sin \theta   &\cos \theta
\end{matrix}  \right): \theta \in \mathbb{R} \right\}, \\
A_2^{+} &= \left\{ \left( \begin{matrix}
    t & \\
        &t^{-1}
\end{matrix}  \right): t > 0 \right\}, \\
N_2 &= \left\{ \left( \begin{matrix}
    1 & u \\
     0   &1
\end{matrix}  \right): u \in \mathbb{R} \right\}, \\
N_2' &= \left\{ \left( \begin{matrix}
    1 & 0 \\
     u   &1
\end{matrix}  \right): u \in \mathbb{R} \right\}.
\end{split}
\end{equation*}

\begin{theorem}
The multiple Dirichlet series $\xi_{\pm}(s_1, s_2, w)$
have analytic continuations as meromorphic functions in $\mathbb{C}^3$. In addition to the functional equation given in Theorem \ref{thm:Z functional equation},   the functions
\begin{equation}
\begin{split}
&\prod_{i=1}^2 (2 \pi)^{-s_i} \left(\sin \frac{\pi s_i}{2} \right)^{-1} \Gamma(s_i)\zeta(2 s_i) \xi_{+}(s_1, s_2, w),\\
&\prod_{i=1}^2  (2 \pi)^{-s_i} \Gamma(s_i)\zeta(2 s_i) \xi_{-}(s_1, s_2, w)
\end{split}
\end{equation}
are invariant under the transformations $(s_i, w) \to (1-s_i, s_i + w - \frac{1}{2})$ for $i = 1, 2$.
\end{theorem}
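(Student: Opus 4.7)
The plan is to derive the two remaining functional equations $(s_i, w) \mapsto (1-s_i, s_i + w - \tfrac{1}{2})$ for $i = 1, 2$ from the classical functional equations of quadratic Dirichlet $L$-series, and then combine all three functional equations with Bochner's tube theorem to obtain meromorphic continuation to $\mathbb{C}^3$. The starting point is an algebraic factorization: substituting $m = dm'$, $n = dn'$, $D = d^2 D'$ in \eqref{eq:A3 multiple Dirichlet series} decouples the inner divisor sum over $d$ and yields
\[
\xi_{\pm}(s_1, s_2, w) \;=\; \zeta(s_1 + s_2 + 2w - 1)\sum_{\pm D > 0} \frac{L(s_1, D)\, L(s_2, D)}{|D|^{w}},
\]
where $L(s, D) := \sum_{m \geq 1} A(4m, D)\, m^{-s}$. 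Crucially, $s_1 + s_2 + 2w - 1$ is invariant under each of the two involutions in question, so the $\zeta$ prefactor plays no role in the $s_i$-functional equations.

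Next, I would invoke the single-variable functional equation for $L(s, D)$ at each non-zero integer $D$. Setting $G_{+}(s) := (2\pi)^{-s}(\sin\tfrac{\pi s}{2})^{-1}\Gamma(s)\zeta(2s)$ and $G_{-}(s) := (2\pi)^{-s}\Gamma(s)\zeta(2s)$, one has
\[
G_{\epsilon(D)}(s)\, L(s, D) \;=\; |D|^{1/2 - s}\, G_{\epsilon(D)}(1-s)\, L(1-s, D)
\]
up to a $D$-independent sign, where $\epsilon(D) \in \{+, -\}$ is the sign of $D$. This is built into the proof of \cite[Theorem 2]{shintani1975zeta}, where Shintani's $\xi_i(s, w) = \sum_{\pm D > 0} L(s, D)\, |D|^{-w}$ have functional equations that hold term by term in $D$. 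Applying this identity to the $L(s_1, D)$-factor inside $\sum_{\pm D > 0} L(s_1, D) L(s_2, D)|D|^{-w}$ transfers $|D|^{1/2 - s_1}$ onto the weight $|D|^{-w}$, producing $|D|^{-(s_1 + w - 1/2)}$, and thus yields the invariance of $\prod_{i=1}^{2} G_{\pm}(s_i)\, \xi_{\pm}(s_1, s_2, w)$ under $(s_1, w) \mapsto (1-s_1, s_1 + w - \tfrac{1}{2})$. The symmetric statement for $s_2$ follows by the same argument applied to the $L(s_2, D)$-factor.

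Finally, combining the two $(s_i, w)$-functional equations with the $w$-functional equation of Theorem~\ref{thm:Z functional equation} (transferred from $Z(f, \cdots)$ to $\xi_{\pm}$ via Proposition~\ref{prop:Z Phi relation}), the initial tube of absolute convergence $\{\mathrm{Re}(s_1), \mathrm{Re}(s_2) > 1,\ \mathrm{Re}(w) > 3\}$ is translated by the group generated by these three involutions to a union of tubes whose convex hull is all of $\mathbb{C}^3$. Bochner's tube theorem then delivers the desired meromorphic continuation to $\mathbb{C}^3$.

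The main obstacle lies in the second step: extracting the precise completion $G_{\pm}(s)$ for $L(s, D)$ uniformly in $D$, including non-fundamental $D$, requires careful local bookkeeping of the Euler factors at primes dividing $D$, so that the $\zeta(2s)$ denominator, the $\sin$-factor at $D > 0$, and the power of $|D|$ all align with those appearing in the term-by-term $D$-decomposition of Shintani's completed $\xi_i(s, w)$. All the needed ingredients are present in Shintani's argument, but matching the gamma and sine factors against the form claimed in the theorem demands some work.
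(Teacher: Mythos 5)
Your approach is genuinely different from the paper's and, modulo two loose ends, correct. The key insight is the factorization
\[
\xi_{\pm}(s_1, s_2, w) = \zeta(s_1+s_2+2w-1)\sum_{\pm D > 0}\frac{L(s_1, D)\,L(s_2, D)}{|D|^w},
\]
which does hold (check: $m=dm'$, $n=dn'$, $D=d^2D'$ decouples the $d$-sum as $\sum_d d^{1-s_1-s_2-2w}$). With that, the two $(s_i, w)$ functional equations reduce to the single-variable functional equation of $L(s, D)$. Here you can be sharper than you are: since Shintani's completed $\xi_1, \xi_2$ equal their transforms under $(s, w) \to (1-s, s+w-\tfrac{1}{2})$ identically (sign $+1$), and since both sides are Dirichlet series in $|D|^{-w}$, the uniqueness theorem for Dirichlet-series coefficients gives $G_{\pm}(s)L(s, D) = |D|^{1/2-s}G_{\pm}(1-s)L(1-s, D)$ term by term with no sign ambiguity and no further local computation --- the ``careful local bookkeeping'' you worry about in your last paragraph is not actually needed. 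Your approach thus cleanly realizes the remark in the introduction that the $(s_i, w)$-equations ``can also be obtained from the functional equations of quadratic Dirichlet $L$-functions.'' The paper instead proceeds via the Shintani zeta function: it unfolds $Z(f, s_1, s_2, w)$ along the Eisenstein series $E(2s_1-1, h_1)$ to transfer the functional equation of $E$, and separately establishes the invariance of the archimedean orbital integrals $\Phi_{\pm}$ by a change of coordinates and an explicit integral representation of Legendre functions. Your route is shorter and more transparent; the paper's is self-contained in its framework and produces the archimedean gamma factors explicitly. Where you are genuinely vaguer than the paper is the final continuation step: the three involutions you list generate only a $(\mathbb{Z}/2\mathbb{Z})^3$ (they pairwise commute once you center at $(\tfrac{1}{2},\tfrac{1}{2},\tfrac{1}{2})$), not the full $W(A_3)$, so ``the convex hull of the translated tubes is all of $\mathbb{C}^3$'' is a claim that needs checking rather than a gesture; the paper handles this incrementally using the everywhere-convergent truncated zeta function $Z_+$ and repeated applications of Bochner. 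Supplying that verification would close the one real gap.
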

\begin{proof}
We follow Shintani's convention in \cite{shintani1972dirichlet} to define Eisenstein series on $\mathrm{SL}_2(\mathbb{R})$. The Iwasawa decomposition is $K A^{+}_2 N'_2$ with Haar measure $dg =  du \frac{d^{\times}t}{t^2} d\theta$. We write $t(g)$ for the elements $A^{+}_2$ in the representation of $g$ in the Iwasawa decomposition. The Eisenstein series $E(z, g)$ for $\mathrm{Re}(z) > 1$ is defined by
\begin{equation*}
\begin{split}
E(z, g) = \frac{1}{2} \sum_{\mathrm{SL_2}(\mathbb{Z} )\backslash  B_2^{+}(\mathbb{Z}) } t(g \gamma)^{z + 1}
\end{split}
\end{equation*}
which has an analytic continuation to a meromorphic function on the whole complex plane. The functional equation of $E(z, g)$ is given by
\begin{equation*}
\begin{split}
\xi(1 + z) E(g, z) = \xi(1 - z) E(g, -z)
\end{split}
\end{equation*}
where $\xi(z) = \pi^{-\frac{z}{2}} \Gamma\left(\frac{z}{2}  \right) \zeta(z)$. 

Let $B^1_2(\mathbb{R})$ denote $B^{+}_2(\mathbb{R})\cap \mathrm{SL}_2(\mathbb{R})$. We write $[ \mathrm{SL}_2 ]$ for $\mathrm{SL_2}(\mathbb{R} )\backslash  \mathrm{SL_2}(\mathbb{Z})$ and $[B^{1}_2]$ for $B^1_2(\mathbb{R}) \backslash B^{+}_2(\mathbb{Z})$. Then for any  $\mathrm{SO}_2$-invariant Schwartz function $f \in \mathcal{S}(X)$ such that $f$ vanishes on the singular subset $\Sigma_{\mathrm{sig}}$, the zeta function $Z(f, s_1, s_2, w)$ can be written as, after unfolding the Eisenstein series $E(z, g)$,
\begin{multline*}
Z(f, s_1, s_2, w)  = \pi^{-1} \int_{0}^{\infty} \int_{h_2 \in [B^1_2]}  \int_{h_1 \in [\mathrm{SL}_2]}  E(2s_1-1, h_1) \chi_2(h_2)^{s_2} \lambda^{4(w + \frac{s_1}{2} + \frac{s_2}{2})}  \\
 \times \sum_{n=1}^{\infty} \sum_{v \in n L'} n^{\frac{\mathrm{dim}(V)}{2} -2}f((h_1, h_2) \cdot \lambda^2 v) dh_1 dh_2 d^{\times}\lambda.
\end{multline*}
By Proposition \ref{prop:Z Phi relation}, the domain of absolute convergence is given by $\{ (s_1, w, s_2): \mathrm{Re}(s_1) > 1, \mathrm{Re}(w) > 3, \mathrm{Re}(s_2) > 1 \}$.  By functional equation of $E(z, g)$, this domain can be extended to $\{ (s_1, w, s_2): \mathrm{Re}(s_1) < 0, \mathrm{Re}(s_1 + 2w) > 6 + (1 - \mathrm{Re}(s_1)), \mathrm{Re}(s_2) > 1 \}$. By Bochner's convexity theorem \cite{bochner1938theorem}, it can be analytically continued to $\{ (s_1, w, s_2): 0 < \mathrm{Re}(s_1) < 1, \mathrm{Re}(s_1 + 2w) > 6 + 1, \mathrm{Re}(s_2) > 1 \}$. Therefore $Z(f, s_1, s_2, w)$ has analytic continuation to a meromorphic function in the domain $\{ (s_1, w, s_2): \mathrm{Re}(s_1 + 2w) > 6 + \mathrm{max}(\mathrm{Re}(s_1), 1-\mathrm{Re}(s_1), 1 ), \mathrm{Re}(s_2) > 1 \}$. Furthermore,  $Z_{+}(f, s_1, s_2, w)$ has analytic continuation to a  meromorphic function in the domain $\{(s_1, w, s_2): (s_1, w) \in \mathbb{C}^2,  \mathrm{Re}(s_2) > 1 \}$. The function 
\begin{equation*}
\begin{split}
\pi^{-s_1} \Gamma\left( s_1 \right) \zeta(2s_1) E(2s_1-1, h_1) \lambda^{4 (w + \frac{s_1}{2})}
\end{split}
\end{equation*}
is invariant under the transformation $(s_1, w) \to (1-s_1, s_1 + w - \frac{1}{2})$. Hence 
\begin{equation*}
\begin{split}
\pi^{-s_1} \Gamma\left( s_1 \right) \zeta(2s_1) Z(f, s_1, s_2, w), \quad \pi^{-s_1} \Gamma\left( s_1 \right) \zeta(2s_1) Z_{+}(f, s_1, s_2, w) 
\end{split}
\end{equation*}
are also invariant under the transformation $(s_1, w) \to (1-s_1, s_1 + w - \frac{1}{2})$. By symmetry of $s_1, s_2$, combining the equation \eqref{eq:Z entire function} shows $Z(f, s_1, s_2, w)$ has analytic continuation to a meromorphic function in the entire domain $\{ (s_1, w, s_2) \in \mathbb{C}^3 \}$.

Now we prove the invariance property for the orbital integrals
$\Phi_{\pm}(f, s_1, s_2, w)$.  Write $(x_{\pm, 1}, x_{\pm, 2})$ for $x_{\pm}$. Let $V_i = V_{\mathrm{bqf}, i}(\mathbb{R}) \text{ (i = 1, 2)}$.  By Proposition \ref{prop:Dixmier-Malliavin} , we can choose $f_1 \otimes f_2 \in  \mathcal{S}(V_{ 1}) \otimes \mathcal{S}(V_{ 2}) \subset \mathcal{S}(V)$ such that $f = \left(f_1 \otimes f_2\right)|_{X^0}$. Then the orbital integrals can be written as
\begin{multline*}
\Phi_{\pm}(f, s_1, s_2, w)  = \int_{0}^{\infty} \int_{h_1 \in B^1_2(\mathbb{R})}  \int_{h_2 \in B^1_2(\mathbb{R})}  \chi_1(h_1)^{s_1} \chi_2(h_2)^{s_2} \lambda^{4(w + \frac{s_1}{2} + \frac{s_2}{2})}  \\
\times f_1( h_1 \cdot \lambda^2 x_{\pm, 1}) f_2( h_2 \cdot \lambda^2 x_{\pm, 2}) dh_1 dh_2 d^{\times}\lambda.
\end{multline*}
We identity $V_{\pm, 1} = B_2^{+} \cdot x_{\pm}$ which is open dense in $\mathbb{R}^3$. Let $dx = dx_1 dx_2 dx_3$ denote the usual volume form on $\mathbb{R}^3$. Then the action $g = d_{\lambda} a_t n_u$ on $x_{\pm}$ gives the change of coordinates such that the volume form is equal to
\begin{equation*}
\begin{split}
dx_1 dx_2 dx_3 = 16 (\lambda t)^2  \lambda^4 d^{\times}\lambda du \frac{d^{\times}t}{t^2} = 16 \chi_1(g) \chi(g) dg. 
\end{split}
\end{equation*}
Hence 
\begin{multline*}
\Phi_{\pm}(f, s_1, s_2, w)  = 4^{-(w+\frac{s_2}{2}+1)} \int_{x \in V_{\pm, 1}} \int_{h_2 \in B^1_2(\mathbb{R})}  |P_1(x)|^{s_1-1}  |P(x)|^{w + \frac{s_2}{2} -1 } \chi_2(h_2)^{s_2}  \\
\times f_1( x ) f_2( h_2 \cdot 2^{-1}\sqrt{|P(x)|} x_{\pm, 2}) dx dh_2.
\end{multline*}
For $(\theta, u, t) \in [0, \pi] \times \mathbb{R} \times \mathbb{R}_{+}$, change variables 
\begin{equation*}
\begin{split}
x(\theta, u, t)  = k_{\theta} \cdot (u, \sqrt{4 u^2 + t}, u)
\end{split} \quad \text{if $x \in V_{+}$},
\end{equation*}
and
\begin{equation*}
\begin{split}
x(\theta, u, t)  = k_{\theta} \cdot (u, \sqrt{4 u^2 - t}, u)
\end{split} \quad \text{$4u^2 \geq t$  if $x \in V_{-}$}.
\end{equation*}
Assume $f_1$ is $\mathrm{SO}_2$-invariant. Then 
\begin{multline*}
     \int_{x \in V_{+, 1}} |P_1(x)|^{s_1-1}  |P(x)|^{w -1 }  f_1( x ) f_2( h_2 \cdot 2^{-1}\sqrt{|P(x)|} x_{+, 2}) dx  \\
     = 2^{-1}\int_{0}^{\infty} \int_{-\infty}^{\infty} \int_{0}^{\pi} |u + \sqrt{u^2 + \frac{t}{4}} \sin(2 \theta)|^{s_1-1}  t^{w -1 }  f_1( x(0, u, t) )   \\ 
\times f_2( h_2 \cdot 2^{-1}\sqrt{t} x_{+, 2}) d\theta du dt ,
\end{multline*}
and 
\begin{multline*}
     \int_{x \in V_{-, 1}} |P_1(x)|^{s_1-1}  |P(x)|^{w -1 }  f_1( x ) f_2( h_2 \cdot 2^{-1}\sqrt{|P(x)|} x_{-, 2}) dx  \\
     = \int_{0}^{\infty} \int_{2^{-1}\sqrt{t}}^{\infty} \int_{0}^{\pi} |u + \sqrt{u^2 - \frac{t}{4}} \sin(2 \theta)|^{s_1-1}  t^{w -1 }  f_1( x(0, u, t) )   \\ 
\times f_2( h_2 \cdot 2^{-1}\sqrt{t} x_{-, 2}) d\theta du dt ,
\end{multline*}
Using integral representation of  the Legendre function we have
\begin{multline*}
\int_{0}^{\pi} |u + \sqrt{u^2 + \frac{t}{4}} \sin(2 \theta)|^{s_1-1} d\theta = 2^{-s_1+1} t^{\frac{s_1-1}{2}} \frac{\pi}{2} \left(\cos \frac{\pi (s_1-1)}{2}  \right)^{-1} \\
\times \left( P_{s_1-1}(2 \sqrt{-1} u t^{-1/2} ) + P_{s_1-1}(-2 \sqrt{-1} u t^{-1/2} ) \right),
\end{multline*}
and
\begin{align*}
\int_{0}^{\pi} |u + \sqrt{u^2 - \frac{t}{4}} \sin(2 \theta)|^{s_1-1} d\theta = 2^{-s_1+1} t^{\frac{s_1-1}{2}} \pi  
 P_{s_1-1}(2 u t^{-1/2} ) \quad \text{for $ u > 2^{-1} \sqrt{t}$} .
\end{align*}
The Legendre function $P_s(z)$ is an entire function of $s$ if $z > 1$ or $z \in i \mathbb{R}$ and satisfies the functional equation $P_{s-1}(z) = P_{-s}(z)$.  It follows that
\begin{align*}
4^w 2^{s_1} \sin\left( \frac{\pi s_1}{2} \right) \Phi_{+}(f_1, s_1, s_2, w),  \quad  4^w2^{s_1}  \Phi_{-}(f_1, s_1, s_2, w)
\end{align*}
are invariant under the transformation $(s_1, w) \to (1-s_1, s_1 + w - \frac{1}{2})$.  Hence Proposition \ref{prop:Z Phi relation} implies that 
\begin{equation*}
\begin{split}
(2 \pi)^{-s_1} \left(\sin \frac{\pi s_1}{2} \right)^{-1}\Gamma(s_1)\zeta(2 s_1) \xi_{+}(s_1, s_2, w), \quad (2 \pi)^{-s_1} \Gamma(s_1)\zeta(2 s_1) \xi_{-}(s_1, s_2, w).
\end{split}
\end{equation*}
are also invariant under the transformation $(s_1, w) \to (1-s_1, s_1 + w - \frac{1}{2})$ and have analytic continuations to meromorphic functions in the domain $\{(s_1, w, s_2): (s_1, w) \in \mathbb{C}^2,  \mathrm{Re}(s_2) > 1 \}$. Now the assertion of the theorem follows from the symmetry of $s_1$ and $s_2$.  
\end{proof}


\printbibliography 

\end{document}